%
%
%

\documentclass[graybox]{svmult}


\usepackage{type1cm}        
%
\usepackage{makeidx}         
\usepackage{graphicx}        
\usepackage{multicol}        
\usepackage[bottom]{footmisc}

\usepackage{newtxtext}       %
\usepackage{newtxmath}       


\makeindex             


\begin{document}

\title*{Central Values for Clebsch-Gordan coefficients}
\author{Robert W. Donley, Jr.}
\institute{Robert W. Donley, Jr., \at Queensborough Community College, Bayside, New York, \email{RDonley@qcc.cuny.edu}}
%
%
\maketitle

\abstract*{We develop further properties of the matrices $M(m, n, k)$ defined by the author and W. G. Kim in a previous work.  In particular,  we continue an alternative approach to the theory of Clebsch-Gordan coefficients in terms of combinatorics and convex geometry.  New features include a censorship rule for zeros, a sequence of 36-pointed stars of zeros, and another proof of Dixon's Identity.   As a major application, we reinterpret the work of Raynal {\it et al.} on vanishing Clebsch-Gordan coefficients as a ``middle-out'' approach to computing $M(m, n, k).$ }

\abstract{We develop further properties of the matrices $M(m, n, k)$ defined by the author and W. G. Kim in a previous work.  In particular,  we continue an alternative approach to the theory of Clebsch-Gordan coefficients in terms of combinatorics and convex geometry.  New features include a censorship rule for zeros, a sequence of 36-pointed stars of zeros, and another proof of Dixon's Identity.   As a major application, we reinterpret the work of Raynal {\it et al.} on vanishing Clebsch-Gordan coefficients as a ``middle-out" approach to computing $M(m, n, k).$ }

\section{Introduction}
\label{sec:1}

In the representation theory of $SU(2)$, the Clebsch-Gordan decomposition for tensor products of irreducible representations yields a uniform pattern for highest weights,  generally, as an arithmetic progression of integers with difference two, symmetric about zero.  Curiously, at the vector level, if one tensors two vectors of weight zero, a similar arithmetic progression of weights occurs, but now with difference four.  In the theory of spherical varieties, extensions of this problem consider minimal gap lengths in the weight monoid associated to spherical vector products.   An elementary calculation for the case of $SU(2)$ initiates the present work, given in Proposition 15, and we review the open problem of vanishing beyond the weight zero case. 

Continuing the work began in \cite{Do},  this approach to Clebsch-Gordan coefficients substitutes the use of hypergeometric series \cite{Vi} and the like with elementary combinatorial methods (generating functions, recurrences, finite symmetry groups, Pascal's triangle).  Of course, hypergeometric series are fundamental to the theory; a long range goal would be to return this alternative approach, once sufficiently developed, to the hypergeometric context with general parameters.  At the practical level, certain computer simulations in nuclear physics and chemistry may require excessively large numbers of  Clebsch-Gordan coefficients, possibly with large parameters.  Methods for an integral theory have immediate scientific applications. 

As with \cite{Do}, this work contains many direct elementary proofs of known, but perhaps lesser known at large,  results and gives them a new spatial context. One central item of concern, the domain space, is a set of integer points inside of a five-dimensional cone, equipped with an order 72 automorphism group, which in turn consists of the familiar symmetries for the determinant.

Key observations in this work depend on the fixed points of the symmetry group in the cone, a distinguished subgroup of dihedral type $D_{12}$,  polygonal subsets of the domain invariant under the subgroup action, and the simultaneous use of recurrences with the subgroup's center.  One byproduct of the theory is yet another proof of Dixon's Identity and some variants, and, with this identity, our computational viewpoint shifts from the specific ``outside-in" algorithm of \cite{Do} to a universal ``middle-out" approach, adapted from the work in \cite{RRV}.

Section 2 recalls the algorithm of \cite{Do} for $M(m, n, k)$, and section 3 develops basic properties of $M(m, n, k).$  Sections 4 and 5 review the theory of the so-called ``trivial" zeros, and sections 6--9 reconsider the results of \cite{RRV} as computations near the center of $M(m, n, k).$

\section{Coordinate Vector Matrices for Clebsch-Gordan Sums}
\label{sec:2}

As a function in five variables, $c_{m, n, k}(i, j)$ is defined on the integer points of the cone
\begin{equation} 0\le i \le m,\quad  0\le j\le n,\quad 0\le k \le \min(m, n), \quad 0 \le i+j-k \le m+n-2k.\notag\end{equation}  It is convention to extend this domain  by zero; here it is enough to do so at least for the matrices $M(m, n, k)$ defined below and for clarity we often omit the corner zeros.
In general, the various Clebsch-Gordan coefficients $C_{m, n, k}(i, j)$ and $c_{m, n, k}(i, j)$ differ by  a nonzero factor, and our approach to vanishing of Clebsch-Gordan coefficients is through vanishing of the sum $c_{m, n, k}(i, j).$  

From \cite{Do}, all $c_{m, n, k}(i, j)$  in (3) below may be computed algorithmically as a matrix $M(m, n, k)$.  
With $m, n, k$ fixed, Proposition 1 and Theorem 1 below produce $M(m, n, k),$ with columns corresponding to coordinate vectors for weight vectors in the subrepresentation $V(m+n-2k)$ of $V(m)\otimes V(n)$.  Consideration of Pascal's identity gives an explicit formula for $c_{m, n,k}(i, j)$ in Proposition 2.
 
For non-negative integers $a, b , c$, multinomial coefficients are defined by
\begin{equation}\left(\begin{array}{c} a+b \\ a\end{array}\right) = \frac{(a+b)!}{a!\ b!}\qquad\mbox{and}\qquad\left(\begin{array}{c} a+b+c \\ a,\ b,\ c\end{array}\right) = \frac{(a+b+c)!}{a!\ b!\ c!}.\end{equation}
First the highest weight vectors for $V(m+n-2k)$ are defined by
\begin{proposition}[Leftmost column for $M(m, n, k)$]  With $0\le i\le k$, define the $(i+1, 1)$-th entry of $M(m, n, k)$  by
\begin{equation}c_{m, n, k}(i, k-i) = (-1)^i\left(\begin{array}{c}m-i \\ k-i \end{array}\right)\left(\begin{array}{c}n-k+i \\ i \end{array}\right).\end{equation}
\end{proposition}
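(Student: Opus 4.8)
The plan is to identify the leftmost column with the coordinate vector of the highest weight vector of the summand $V(m+n-2k)$ and to justify the formula by the annihilation condition that characterizes such a vector. First I would fix standard weight bases $u_0,\dots,u_m$ for $V(m)$ and $w_0,\dots,w_n$ for $V(n)$, taking $u_i=F^i u_0$ and $w_j=F^j w_0$ for the lowering operator $F$, so that $u_i$ has weight $m-2i$, $w_j$ has weight $n-2j$, and the raising operator acts by $E\,u_i = i(m-i+1)\,u_{i-1}$ (and likewise on the $w_j$). A weight count then shows that the weight-$(m+n-2k)$ space of $V(m)\otimes V(n)$ is spanned by the products $u_i\otimes w_{k-i}$; since $k\le\min(m,n)$ forces $0\le k-i\le n$, the admissible range is exactly $0\le i\le k$, matching the index set in the statement.

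Next I would write the candidate highest weight vector as
\[
v_k=\sum_{i=0}^{k} c_{m,n,k}(i,k-i)\,u_i\otimes w_{k-i}
\]
and impose $E\,v_k=0$ for $E=E\otimes 1+1\otimes E$. Applying $E$ and collecting the coefficient of each basis vector $u_a\otimes w_{k-1-a}$ of the weight-$(m+n-2k+2)$ space yields the two-term recurrence
\[
(a+1)(m-a)\,c_{m,n,k}(a+1,k-a-1)+(k-a)(n-k+a+1)\,c_{m,n,k}(a,k-a)=0,
\]
valid for $0\le a\le k-1$. These $k$ equations determine all $k+1$ entries from the top one up to a single overall scalar.

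It then remains to verify that the closed form in $(3)$ satisfies this recurrence and to record the resulting scale. Here I would simply take the ratio of consecutive proposed entries: standard manipulations collapse the quotient of the two $V(m)$-type binomials to $(k-i)/(m-i)$ and the quotient of the two $V(n)$-type binomials to $(n-k+i+1)/(i+1)$, so that the ratio of consecutive entries of $(3)$ is exactly $-(k-i)(n-k+i+1)/\big[(i+1)(m-i)\big]$, in agreement with the recurrence. Evaluating $(3)$ at $i=0$ pins the scalar, giving the normalization $c_{m,n,k}(0,k)=\left(\begin{array}{c} m \\ k\end{array}\right)$.

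The only genuinely delicate point is the normalization convention, since the highest-weight condition fixes $v_k$ only up to a scalar: the actual content of the proposition is the specific integral normalization (leftmost entry $\left(\begin{array}{c} m \\ k\end{array}\right)$) together with the structure constants $i(m-i+1)$ coming from the basis $u_i=F^i u_0$, as opposed to the divided-power or orthonormal bases, which would alter the recurrence and hence the matching formula. Once that convention is in place the verification is the routine binomial-ratio computation above; I would take care only to confirm that the extremal cases $a=0$ and $a=k-1$ produce no spurious boundary terms, so that the recurrence—and therefore the closed form—holds across the entire index range.
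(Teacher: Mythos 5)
Your argument is correct: the two-term recurrence obtained from $E\,v_k=0$, the ratio check of consecutive entries of the closed form (which collapses to $-(k-i)(n-k+i+1)/[(i+1)(m-i)]$ exactly as you compute), and the normalization $c_{m,n,k}(0,k)=\left(\begin{array}{c} m \\ k\end{array}\right)$ (matching the constant top-edge value recorded in (10)) are all right, and the boundary cases are harmless since the offending coefficients vanish. The paper states Proposition 1 without proof, citing \cite{Do}; your derivation via the annihilation condition on the highest weight vector, followed by lowering with $f$, is precisely the approach that framework presupposes, so this is essentially the same route.
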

Repeated application $f$ to these highest weight vectors produces coordinates for general weight vectors in the corresponding $V(m+n-2k),$ recorded as
\begin{proposition}[Entry at coordinate $(i+1, i+j-k+1)$ in $M(m, n, k)$]
\begin{equation}c_{m,n,k}(i,j) = \sum_{l=0}^{k} (-1)^l  \left(\begin{array}{c}i+j-k \\ i-l \end{array}\right)\left(\begin{array}{c}m-l \\ k-l \end{array}\right)\left(\begin{array}{c}n-k+l \\ l \end{array}\right).\end{equation}
\end{proposition}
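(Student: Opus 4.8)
The plan is to deduce Proposition 2 from Proposition 1 by following the action of the lowering operator $f$ on coordinate vectors. As the discussion preceding the statement indicates, the entry $c_{m,n,k}(i,j)$ is the coordinate of $v_i\otimes v_j$ in $f^{\,t}$ applied to the highest weight vector of $V(m+n-2k)$, where $t=i+j-k$ is the column index. Because $f$ acts on the tensor product as $f\otimes 1 + 1\otimes f$ and shifts the weight index in each factor by one, the induced map on coordinates is the Pascal recurrence $c_{m,n,k}(i,j)=c_{m,n,k}(i-1,j)+c_{m,n,k}(i,j-1)$, with entries outside the admissible range read as zero. The task therefore reduces to iterating this single recurrence outward from the leftmost column, whose entries are supplied by Proposition 1.

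First I would record the initial data and recast the claim in convolution form. At $t=0$ the coordinate vector is supported on the anti-diagonal $i+j=k$, where its entries are the leftmost-column values $c_{m,n,k}(l,k-l)$ of Proposition 1. I would then observe that the formula to be proved is exactly the convolution
\[c_{m,n,k}(i,j)=\sum_{l=0}^{k} c_{m,n,k}(l,k-l)\left(\begin{array}{c} i+j-k \\ i-l\end{array}\right),\]
into which the expression of Proposition 1 has been substituted. When $t=i+j-k=0$ the coefficient $\left(\begin{array}{c} 0 \\ i-l\end{array}\right)$ is nonzero only for $l=i$, so the sum collapses to the single term $c_{m,n,k}(i,k-i)$ and the base case is immediate.

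Next I would induct on the column index $t$. Granting the convolution formula in column $t-1$, the Pascal recurrence writes $c_{m,n,k}(i,j)$ as $c_{m,n,k}(i-1,j)+c_{m,n,k}(i,j-1)$; both summands lie in column $t-1$ and so are governed by the inductive hypothesis, with Pascal coefficients $\left(\begin{array}{c} t-1 \\ (i-1)-l\end{array}\right)$ and $\left(\begin{array}{c} t-1 \\ i-l\end{array}\right)$ respectively. Pulling out the common factor $c_{m,n,k}(l,k-l)$ and invoking Pascal's identity
\[\left(\begin{array}{c} t-1 \\ (i-1)-l\end{array}\right)+\left(\begin{array}{c} t-1 \\ i-l\end{array}\right)=\left(\begin{array}{c} t \\ i-l\end{array}\right)\]
term by term promotes column $t-1$ to column $t$, which is the assertion.

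The main obstacle is not this bookkeeping but the justification of the coordinate recurrence itself: one must confirm that in the basis used in \cite{Do} the operator $f$ shifts indices with unit coefficient, so that the coordinate map is the plain Pascal rule rather than a scalar-weighted variant, and one must check the boundary so that the zero-extension convention forces the relevant binomials to vanish when $i-l<0$ or $i-l$ exceeds the admissible range. Once this is secured, a non-inductive route is also available: since $f\otimes 1$ and $1\otimes f$ commute, the binomial theorem gives $f^{\,t}=\sum_{s}\left(\begin{array}{c} t \\ s\end{array}\right)(f\otimes 1)^s(1\otimes f)^{t-s}$, and applying this to $v_l\otimes v_{k-l}$ sends it to $v_{l+s}\otimes v_{k-l+t-s}$, whose first factor matches $v_i$ exactly when $s=i-l$ (the second factor then automatically being $v_j$). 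The coefficient of $v_i\otimes v_j$ is thus $\left(\begin{array}{c} t \\ i-l\end{array}\right)$, and summing over $l$ against the data of Proposition 1 yields the formula directly.
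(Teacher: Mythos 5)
Your proposal is correct and follows exactly the route the paper intends: the paper itself gives no detailed proof here, merely noting that repeated application of $f$ to the highest weight vectors of Proposition 1 together with Pascal's identity yields the formula (deferring details to \cite{Do}), and your convolution-plus-induction argument, with the base case collapsing to Proposition 1 and the boundary handled by vanishing binomials, is precisely that argument carried out. The normalization issue you flag is settled by the paper's own Proposition 7, which records that the coordinates satisfy the unweighted Pascal recurrence.
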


We also note the following alternative expression from \cite{Do}:
\begin{equation}c_{m,n,k}(i,j) = \sum_{l=0}^{k} (-1)^l  \left(\begin{array}{c}i+j-k \\ i-l \end{array}\right)\left(\begin{array}{c}m-i\\ k-l \end{array}\right)\left(\begin{array}{c}n-j \\ l \end{array}\right).\end{equation}
With this expression, results in later sections may be interpreted by way of Dixon's Identity and its extensions for binomial sums.

\begin{theorem}[Definition of Matrix $M(m, n, k)$]  To calculate the coordinate vector matrix  $M(m, n, k)$:
\begin{enumerate}
\item initialize a matrix with $m+1$ rows and $m+n-2k+1$ columns,
\item set up coordinates for the highest weight vector in the leftmost column using Proposition 1, and extend the top row value,
\item apply Pascal's recurrence rightwards in an uppercase L pattern, extending by zero where necessary,
\item for the zero entries in lower-left corner, corresponding entries in the upper-right corner are set  to zero, and
\item the $(i+1, i+j-k+1)$-th entry is $c_{m, n, k}(i, j).$
\end{enumerate}
\end{theorem}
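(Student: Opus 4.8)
The plan is to show that steps 1--4 reproduce the closed form (3), which is precisely the assertion of step 5. Since those steps prescribe only a leftmost column, a local propagation rule, and some boundary data, it suffices to establish three facts: that the ``uppercase L'' move of step 3 is a Pascal-type recurrence satisfied by (3); that the leftmost column produced by (3) agrees with Proposition 1; and that the recurrence together with the prescribed boundary data pins down every entry, so the algorithm has no latitude and must output (3).

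First I would pin down the recurrence itself. The $s$-th column of $M(m,n,k)$ records the coordinate vector obtained by applying the lowering operator $f$ to the $(s-1)$-st; as $f$ acts as a derivation on the tensor product, in the normalization of \cite{Do} this becomes the clean Pascal rule
\[c_{m,n,k}(i,j) = c_{m,n,k}(i-1,j) + c_{m,n,k}(i,j-1).\]
Reading the value at $(i,j)$ off matrix position $(i+1,\,i+j-k+1)$, the two summands sit at $(i,\,i+j-k)$ and $(i+1,\,i+j-k)$, a vertically adjacent pair in the preceding column feeding the entry at the foot of the L. That (3) obeys this rule is a one-line check: applying Pascal's identity to the only factor depending on $i+j-k$,
\[\left(\begin{array}{c}i+j-k \\ i-l\end{array}\right) = \left(\begin{array}{c}i+j-k-1 \\ i-1-l\end{array}\right) + \left(\begin{array}{c}i+j-k-1 \\ i-l\end{array}\right),\]
splits the sum (3) term by term into the right-hand sides of (3) for $c_{m,n,k}(i-1,j)$ and for $c_{m,n,k}(i,j-1)$, since decreasing $i$ or $j$ by one lowers $i+j-k$ to $i+j-k-1$ and, respectively, shifts or fixes the top index $i-l$ while leaving the remaining factors untouched.

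Next I would dispatch the base and boundary data. Putting $i+j-k=0$ in (3) kills every term except $l=i$, leaving $(-1)^i\left(\begin{array}{c}m-i \\ k-i\end{array}\right)\left(\begin{array}{c}n-k+i \\ i\end{array}\right)$, which is exactly (2): the leftmost column of step 2. The instruction to ``extend the top row value'' is the $i=0$ instance of the recurrence, where $c_{m,n,k}(-1,j)$ drops out and $c_{m,n,k}(0,j)=c_{m,n,k}(0,j-1)$ is constant, equal to $\left(\begin{array}{c}m \\ k\end{array}\right)$. Indices leaving the defining cone produce vanishing binomials in (3), which is the ``extend by zero'' clause, and a short induction shows the recurrence keeps the lower-left block of zeros zero once it is present in the first column.

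The step I expect to be the main obstacle is the corner-zero rule of step 4, and with it the claim that the filled matrix has exactly the support of $c_{m,n,k}$. The trouble is that the recurrence, run blindly, overshoots: along the top row it perpetuates the constant $\left(\begin{array}{c}m \\ k\end{array}\right)$, whereas the true coordinate $c_{m,n,k}(0,j)$ must vanish as soon as $j>n$ leaves the cone. Hence the upper-right zeros cannot arise by propagation and must be imposed, and the content of step 4 is that imposing them is correct. To justify this I would introduce the central reflection $(r,s)\mapsto(m+2-r,\ m+n-2k+2-s)$ of the grid and check by a short computation that it carries the lower-left region $\{\,s<r-k\,\}$, where $j<0$, exactly onto the upper-right region, where $j>n$; since the out-of-cone coordinates vanish on both sides, mirroring the lower-left zeros into the upper-right, as step 4 dictates, sets precisely the entries that must be zero. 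Uniqueness then follows by sweeping columns left to right: the first column is fixed by (2), the top-row value and both corner blocks of zeros are fixed, and every remaining entry equals the sum of the entries immediately above-left and immediately left of it, so the matrix is forced and must coincide with (3).
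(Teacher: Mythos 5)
The paper never proves this theorem: it is recalled from \cite{Do} (``From \cite{Do}, all $c_{m,n,k}(i,j)$ in (3) below may be computed algorithmically as a matrix $M(m,n,k)$''), so there is no in-paper argument to compare yours against, and your self-contained verification is the right thing to supply. Your argument is correct and is the natural one: (3) satisfies $c_{m,n,k}(i,j)=c_{m,n,k}(i-1,j)+c_{m,n,k}(i,j-1)$ because Pascal's identity applied to the lone factor $\left(\begin{array}{c}i+j-k\\ i-l\end{array}\right)$ splits the sum term by term; the column $i+j-k=0$ collapses to the single term $l=i$ and recovers Proposition 1; the top row is constant equal to $\left(\begin{array}{c}m\\ k\end{array}\right)$; and your reflection $(r,s)\mapsto(m+2-r,\,m+n-2k+2-s)$ is exactly $(i,j)\mapsto(m-i,n-j)$, i.e.\ the R23 symmetry read on index sets, which does carry the region $j<0$ onto the region $j>n$ and so justifies step 4. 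You also correctly identify the one genuinely nonobvious point, namely that the upper-right zeros cannot arise from the recurrence because (3) at $i=0$, $j>n$ still evaluates to $\left(\begin{array}{c}m\\ k\end{array}\right)$. The only detail I would ask you to make explicit in the final sweep is consistency of steps 3 and 4: the entry stored at $(r,s)$ has $j=s-r+k$, and its two inputs at $(r-1,s-1)$ and $(r,s-1)$ have $j$-values $j$ and $j-1$; hence every entry with $j\le n$ depends only on entries with $j\le n$, so the garbage that a blind application of the recurrence writes into the region $j>n$ never contaminates the entries you claim agree with (3) and is harmlessly overwritten in step 4. This is the same monotonicity you already invoke for the lower-left zero block, so it is a one-line addition, but without it the ``the matrix is forced'' conclusion is not quite complete.
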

In this work, we often remove corner zeros for visual clarity.  Many examples of $M(m, n, k)$ will be given throughout this work.

\section{Elementary Rules}
\label{sec:3}

A useful parametrization for the domain of $c_{m, n, k}(i, j)$ is given by the set of Regge symbols
\begin{equation}\left|\left|\begin{array}{ccc} \phantom{-}n-k & \phantom{-}m-k & \phantom{-}k \\ \phantom{-}i &\phantom{-} j &\phantom{-} m+n-i-j-k\ \\ \phantom{-}m-i & \phantom{-}n-j & \phantom{-}i+j-k\end{array}\right|\right|.\end{equation}  Note that each row or column sums to $J=m+n-k.$
That is, the domain space is  in one-one correspondence with all 3-by-3 matrices with nonnegative integer entries having this magic square property.  Here we follow \cite{Vi}, while \cite{RRV} switches rows 2 and 3.

In turn, one may define the Regge group of symmetries; each of the 72 determinant symmetries,  generated by row exchange, column exchange, and transpose, correspond to a transformation of $c_{m, n, k}(i, j).$ 

Of particular interest here is the dihedral subgroup $D_{12}$ of twelve elements generated by column switches and the interchange of rows 2 and 3.  Some elements, including a generating set, are given as follows:

\begin{proposition}[R23 Symmetry - Weyl Group] With the asterisks defined by (1),
\begin{equation}\left(\begin{array}{c}m+n-k \\ i,\ j,\ * \end{array}\right)  c_{m, n, k}(m-i, n-j) = (-1)^k \left(\begin{array}{c}m+n-k \\ m-i,\ n-j,\ * \end{array}\right) c_{m, n, k}(i, j).\end{equation}
\end{proposition}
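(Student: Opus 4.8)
\section*{Proof proposal}

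The plan is to argue directly from the explicit formula (4), which is the natural tool because its lower binomial entries already involve $m-i$ and $n-j$, exactly the quantities produced by the substitution $(i,j)\mapsto(m-i,n-j)$. First I would read off the two asterisks from (1): the missing entry of the left-hand multinomial is $m+n-k-i-j$, and that of the right-hand multinomial is $i+j-k$. Writing the two multinomials as ratios of factorials and cancelling the common $(m+n-k)!$, Proposition 3 becomes equivalent to the symmetric statement
$$\frac{c_{m,n,k}(m-i,n-j)}{i!\,j!\,(m+n-k-i-j)!}=(-1)^k\,\frac{c_{m,n,k}(i,j)}{(m-i)!\,(n-j)!\,(i+j-k)!}.$$
Equivalently, setting $\tilde c_{m,n,k}(i,j)=c_{m,n,k}(i,j)\big/\bigl[(m-i)!\,(n-j)!\,(i+j-k)!\bigr]$, the claim is just $\tilde c_{m,n,k}(m-i,n-j)=(-1)^k\,\tilde c_{m,n,k}(i,j)$.

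Next I would put $\tilde c_{m,n,k}$ into a ``balanced'' form. Expanding each of the three binomials in (4) as a ratio of factorials and absorbing the normalizing denominator, every term collapses to a single reciprocal product and one obtains
$$\tilde c_{m,n,k}(i,j)=\sum_{l=0}^{k}\frac{(-1)^l}{(i-l)!\,(k-l)!\,l!\,(j-k+l)!\,(m-i-k+l)!\,(n-j-l)!},$$
with the convention that a term containing a negative factorial argument vanishes; this convention matches the extension-by-zero of $c_{m,n,k}$ and lets the summation bounds take care of themselves. The only routine computation needed here is the simplification of the complementary lower entries, for instance $(i+j-k)-(i-l)=j-k+l$ and $(m-i)-(k-l)=m-i-k+l$.

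The core of the proof is now purely formal. Apply $i\mapsto m-i$, $j\mapsto n-j$ to the balanced form and then reindex the resulting sum by $l\mapsto k-l$. A direct check shows that these two operations together merely permute the six denominator factorials, pairing $(i-l)!\leftrightarrow(m-i-k+l)!$, $(j-k+l)!\leftrightarrow(n-j-l)!$, and $(k-l)!\leftrightarrow l!$, while the sign becomes $(-1)^{k-l}=(-1)^k(-1)^l$. Thus each summand returns to itself up to the overall factor $(-1)^k$, giving $\tilde c_{m,n,k}(m-i,n-j)=(-1)^k\,\tilde c_{m,n,k}(i,j)$ term by term, which is the assertion.

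The main obstacle is bookkeeping rather than ideas: one must verify that all six factorial arguments transform exactly as claimed, and in particular track the three complementary entries $j-k+l$, $m-i-k+l$, $n-j-l$ without a sign or index slip, since any single error there would destroy the pairing. A secondary point requiring care is to confirm that the negative-factorial vanishing convention agrees with the stated five-variable domain of $c_{m,n,k}$, so that the reindexing $l\mapsto k-l$ is a genuine symmetry of the range of summation and not only of the generic term.
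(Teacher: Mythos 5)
Your proof is correct. The paper itself offers no proof of Proposition 3 --- it is presented as one of the 72 Regge (determinant) symmetries of the magic-square symbol (5), implicitly inherited from \cite{Do} and the classical literature --- so your direct verification from formula (4) is a genuinely self-contained alternative. The key steps all check out: dividing by $(m+n-k)!$ reduces the claim to the symmetry of $\tilde c_{m,n,k}(i,j)=c_{m,n,k}(i,j)/[(m-i)!\,(n-j)!\,(i+j-k)!]$, and expanding (4) does give the balanced form with the six denominator factorials $(i-l)!,\ (k-l)!,\ l!,\ (j-k+l)!,\ (m-i-k+l)!,\ (n-j-l)!$. Under $(i,j)\mapsto(m-i,n-j)$ followed by $l\mapsto k-l$ these are permuted exactly as you claim (e.g.\ $(i-l)!\mapsto(m-i-l)!\mapsto(m-i-k+l)!$, and $(n-j-l)!\mapsto(j-l)!\mapsto(j-k+l)!$), the index set $\{0,\dots,k\}$ is mapped to itself, and the sign contributes $(-1)^{k}$; the vanishing convention for negative factorial arguments agrees with the vanishing of $\binom{N}{r}$ for $r<0$ or $r>N$ since the top entries $i+j-k$, $m-i$, $n-j$ are nonnegative on the cone, which is itself preserved by $(i,j)\mapsto(m-i,n-j)$. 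What your route buys is an elementary, formula-level proof requiring nothing but reindexing; what the paper's framing buys is that this identity appears as one instance of the full order-72 group acting on the Regge symbol, so all of Propositions 3--6 come for free from the same structural observation rather than from six separate factorial bookkeeping exercises.
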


\begin{proposition}[C12 Symmetry] When defined, 
\begin{equation}c_{n,m,k}(j, i) = (-1)^kc_{m, n, k}(i, j).\end{equation}
\end{proposition}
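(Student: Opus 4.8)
The plan is to derive the identity directly from the alternative expression (4), since that form places the pairs $(m,i)$ and $(n,j)$ on an equal footing, whereas (3) does not. First I would apply (4) to the left-hand side, swapping the roles of $m$ with $n$ and $i$ with $j$; this gives
\begin{equation}
c_{n,m,k}(j,i) = \sum_{l=0}^{k} (-1)^l  \left(\begin{array}{c}i+j-k \\ j-l \end{array}\right)\left(\begin{array}{c}n-j\\ k-l \end{array}\right)\left(\begin{array}{c}m-i \\ l \end{array}\right).\notag
\end{equation}

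Next I would reverse the order of summation via the substitution $l\mapsto k-l$. This converts the leading sign into $(-1)^{k-l}=(-1)^k(-1)^l$, turns the factor $\left(\begin{array}{c}n-j\\ k-l \end{array}\right)$ into $\left(\begin{array}{c}n-j\\ l \end{array}\right)$, turns $\left(\begin{array}{c}m-i \\ l \end{array}\right)$ into $\left(\begin{array}{c}m-i \\ k-l \end{array}\right)$, and leaves the first factor as $\left(\begin{array}{c}i+j-k \\ j-k+l \end{array}\right)$.

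The key step is then to apply the complementation symmetry $\left(\begin{array}{c}N \\ a \end{array}\right)=\left(\begin{array}{c}N \\ N-a \end{array}\right)$ with $N=i+j-k$ to this first factor. Since $(i+j-k)-(j-k+l)=i-l$, it becomes $\left(\begin{array}{c}i+j-k \\ i-l \end{array}\right)$. Pulling the global sign $(-1)^k$ outside the sum, the remaining sum is precisely the right-hand side of (4) for $c_{m,n,k}(i,j)$, and the proposition follows.

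The main obstacle I anticipate is purely bookkeeping near the boundary of the domain: for some $(i,j)$ several of these binomials vanish because a lower index leaves the range $[0,N]$, so I would note explicitly that the extend-by-zero convention keeps every step valid on the whole cone, and that the complementation symmetry remains correct (with both sides equal to zero) when $a<0$ or $a>N$. With that caveat the argument is a single reindexing, so I do not expect any genuine difficulty; the only real choice is to begin from (4) rather than (3), which is what makes the $m\leftrightarrow n$, $i\leftrightarrow j$ swap transparent.
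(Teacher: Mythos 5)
Your argument is correct, and every step checks out: starting from the symmetric expression (4) rather than (3) is exactly the right move, the reindexing $l\mapsto k-l$ maps the summation range onto itself, and the complementation $\binom{i+j-k}{j-k+l}=\binom{i+j-k}{i-l}$ is legitimate on the whole domain because the cone condition guarantees $i+j-k\ge 0$, so both sides vanish together whenever the lower index leaves $[0,i+j-k]$ under the extend-by-zero convention. The paper itself offers no computation here: Propositions 3--6 are presented as instances of the Regge group acting on the symbol (5), where the C12 symmetry is literally the exchange of the first two columns of the magic square, carrying the symbol for $c_{m,n,k}(i,j)$ to the symbol for $c_{n,m,k}(j,i)$, with the sign $(-1)^k$ being the standard sign attached to an odd column permutation. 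Your direct verification from the closed-form sum is therefore a genuinely different and more self-contained route: it buys an elementary proof that does not presuppose the Regge formalism or import the sign rule from the $3j$-symbol literature, at the cost of being specific to this one symmetry rather than exhibiting it as part of the order-72 group structure the paper leans on elsewhere.
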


\begin{proposition}[C13 Symmetry]  With $i'=i+j-k$ and $m'=m+n-2k$,
\begin{equation}c_{m', n, n-k}(m'-i', j) = (-1)^{n-j}c_{m, n, k}(i, j).\end{equation}
\end{proposition}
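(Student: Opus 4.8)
The plan is to prove (8) by induction on the column index $w = i+j-k$, exploiting the fact that both sides obey the Pascal recurrence underlying Theorem 1. Write $R(i,j) = (-1)^{n-j}c_{m,n,k}(i,j)$ for the right-hand side and $L(i,j) = c_{m',n,n-k}(m'-i',j)$ for the left, with $i'=i+j-k$ and $m'=m+n-2k$. A short computation with the Regge symbol (5) shows that the substitution $(m,n,k,i,j)\mapsto(m',\,n,\,n-k,\,m'-i',\,j)$ is exactly the interchange of columns $1$ and $3$; in particular the transformed triple is $(\tilde m,\tilde n,\tilde k)=(m+n-2k,\,n,\,n-k)$, so $\tilde m+\tilde n-2\tilde k = m$ and the transformed weight level is $\tilde\imath+\tilde\jmath-\tilde k = m-i$. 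This both fixes the expected sign and shows that the two highest-weight lines sit in different places, which is what makes the identity nontrivial.

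First I would record the recurrence. From (3), or directly from the L-shaped Pascal step of Theorem 1, one has $c_{m,n,k}(i,j) = c_{m,n,k}(i-1,j)+c_{m,n,k}(i,j-1)$ for $w\ge 1$, and both $c_{m,n,k}$ and its transform vanish once $i+j<k$. Since the factor $(-1)^{n-j}$ reverses the sign of the second term, this gives $R(i,j) = R(i-1,j)-R(i,j-1)$. The harder half is to show that $L$ satisfies the same relation. Here I would apply the Pascal recurrence in the transformed variables $(\tilde\imath,\tilde\jmath)=(m+n-k-i-j,\,j)$ and then pull the result back to the original lattice: the point $(\tilde\imath-1,\tilde\jmath)$ corresponds to $(i+1,j)$ and $(\tilde\imath,\tilde\jmath-1)$ to $(i+1,j-1)$, so the transformed step reads $L(i,j)=L(i+1,j)+L(i+1,j-1)$, which rearranges to $L(i,j)=L(i-1,j)-L(i,j-1)$. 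This bookkeeping, tracking how the transformed step maps back to the original lattice and confirming that its orientation and sign agree with those of $R$, is the step I expect to be the main obstacle, together with the routine but delicate verification that the extension-by-zero conventions of Theorem 1 keep the recurrence valid along the boundary $i=0$.

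With both sides satisfying $F(i,j)=F(i-1,j)-F(i,j-1)$ for $w\ge 1$ and both vanishing for $w<0$, it remains to match them on the seed line $w=0$, that is $j=k-i$ with $0\le i\le k$. On this line $R$ is given by Proposition 1: since $(-1)^{\,n-(k-i)}(-1)^i=(-1)^{n-k}$, one finds $R(i,k-i)=(-1)^{n-k}\left(\begin{array}{c} m-i\\ k-i\end{array}\right)\left(\begin{array}{c} n-k+i\\ i\end{array}\right)$. For $L$ the same line lands on the bottom row $\tilde\imath=\tilde m$ of the transformed matrix, so in (4) the middle factor becomes $\left(\begin{array}{c} 0\\ \tilde k-l\end{array}\right)$ and only $l=\tilde k=n-k$ survives; evaluating that single term and simplifying the two surviving binomials produces the identical product. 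The induction on $w$ then closes and proves (8). As a fallback, should the recurrence pull-back prove awkward, one can instead equate (4) for the two sides directly, in which case the resulting equality between a sum of length $k+1$ and one of length $n-k+1$ is a Saalschütz/Vandermonde-type evaluation; but the recurrence argument is cleaner and stays within the combinatorial framework emphasized here.
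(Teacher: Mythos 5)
The paper itself offers no proof of Proposition 5: Propositions 3--6 are stated as instances of the Regge group acting on the magic-square symbols (5), imported from \cite{Do} and the standard literature. So your argument is not paralleling a proof in the text; it is supplying one, and the strategy is sound and stays inside the paper's combinatorial framework. Your identification of the transformed parameters $(\tilde m,\tilde n,\tilde k)=(m+n-2k,\,n,\,n-k)$ with $\tilde\imath=m+n-k-i-j$ and transformed column $\tilde\imath+\tilde\jmath-\tilde k=m-i$ is correct; the sign bookkeeping giving $R(i,j)=R(i-1,j)-R(i,j-1)$ is right; the pull-back of the transformed Pascal step to $L(i,j)=L(i-1,j)-L(i,j-1)$ is right; and the base case on $w=0$, where only the term $l=\tilde k=n-k$ of (4) survives on the bottom row $\tilde\imath=\tilde m$, checks out.

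The one step that is thinner than you suggest is the row $i=0$. There the recurrence for $L$ degenerates to $L(0,j)=-L(0,j-1)$, i.e.\ to the claim that two vertically adjacent entries in the \emph{last} column of $M(m',n,n-k)$ sum to zero. This is not an instance of Pascal's recurrence under the extension-by-zero convention: the algorithm of Theorem 1 never applies Pascal beyond the last column, and what is actually needed is that the right-hand vertical edge alternates in sign with constant absolute value ${m\choose k}$ --- a fact the paper asserts in Section 3 but which itself rests on the R23 symmetry together with Proposition 1, not on conventions. So describing this as checking ``that the extension-by-zero conventions keep the recurrence valid'' undersells the requirement, and as stated the induction does not close along that row. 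The patch is easy and exactly parallel to your base case: for $i=0$ one has $\tilde\imath=m+n-k-j$, hence $\tilde\imath+\tilde\jmath-\tilde k=m$, $\tilde m-\tilde\imath=j-k$, $\tilde n-\tilde\jmath=n-j$, and in (4) the constraints from the second and third factors force the single term $l=n-j$, giving $L(0,j)=(-1)^{n-j}{m\choose m-k}=(-1)^{n-j}{m\choose k}=R(0,j)$ directly. With the row $i=0$ and the column $w=0$ both verified by one-term evaluations, the induction on $w$ goes through and the proof is complete.
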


\begin{proposition}[C123 Symmetry] With $i'=i+j-k$ and $m'=m+n-2k$,
\begin{equation}c_{m', m, m-k}(m'-i', i) = (-1)^{m-k+i}c_{m, n, k}(i, j).\end{equation}
\end{proposition}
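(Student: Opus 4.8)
The plan is to obtain C123 as the composition of the two column transpositions already established, exploiting the fact that the three-cycle on the columns of the Regge symbol (6) factors as a product of two column switches. Concretely, I would realize C123 as C13 applied \emph{after} C12, and simply chain the two earlier propositions.

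First I would invoke the C12 Symmetry to rewrite the base quantity as $c_{n,m,k}(j,i) = (-1)^{k}\, c_{m,n,k}(i,j)$. Next I would apply the C13 Symmetry to the twisted object $c_{n,m,k}(j,i)$, performing the substitution $(m,n,k,i,j)\mapsto(n,m,k,j,i)$ in the statement of C13. Under this substitution the auxiliary quantities transform as $i+j-k = i'$ and $n+m-2k = m'$, so the primed index $m'-i'$ is preserved, the third parameter becomes $m-k$, and the middle column index becomes $i$; the C13 sign $(-1)^{n-j}$ becomes $(-1)^{m-i}$. This yields the single display
\begin{equation}c_{m',m,m-k}(m'-i',i) = (-1)^{m-i}\, c_{n,m,k}(j,i).\notag\end{equation}

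Combining the two relations then gives $c_{m',m,m-k}(m'-i',i) = (-1)^{m-i}(-1)^{k}\, c_{m,n,k}(i,j) = (-1)^{m+k-i}\, c_{m,n,k}(i,j)$, and since $(m+k-i)-(m-k+i) = 2(k-i)$ is even, $(-1)^{m+k-i} = (-1)^{m-k+i}$, which is precisely the claimed sign.

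The main obstacle is purely bookkeeping rather than conceptual: one must track the parameter substitutions correctly when the C13 Symmetry is fed the already-permuted arguments of $c_{n,m,k}(j,i)$, and then reconcile the accumulated sign $(-1)^{m+k-i}$ with the stated $(-1)^{m-k+i}$. I would also note the ``when defined'' caveat inherited from C12 and C13, so that the identity is asserted on the common domain where both intermediate expressions make sense; one should check the order of composition as well, since applying C13 before C12 produces the inverse three-cycle and lands on a different Regge image.
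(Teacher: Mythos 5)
Your derivation is correct and is essentially the paper's own (implicit) justification: the paper states Propositions 3--6 as elements of the Regge group generated by column switches, and the three-cycle C123 is by definition the composite of the two transpositions, exactly as you chain them. The bookkeeping checks out --- substituting $(m,n,k,i,j)\mapsto(n,m,k,j,i)$ into C13 leaves $i'$ and $m'$ unchanged and produces $c_{m',m,m-k}(m'-i',i)=(-1)^{m-i}c_{n,m,k}(j,i)$, and the accumulated sign $(-1)^{m+k-i}$ agrees with $(-1)^{m-k+i}$ since the exponents differ by $2(k-i)$ --- and your remark about the order of composition (C13 after C12 giving this three-cycle rather than its inverse) is the one genuine pitfall, correctly handled.
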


Since the Weyl group symmetry preserves $m$, $n$, and $k$, it transforms $M(m, n, k)$ to itself. In fact, the net effect of  this symmetry is to rotate $M(m, n, k)$ by 180 degrees, change signs according to the parity of $k$, and rescale values by a positive scalar, rational in the five parameters. In particular, the zero locus of $c_{m, n, k}(i, j)$ in $M(m, n, k)$ is preserved under this symmetry.

In the complement of the corner triangles of zeros, the upper, left-most, and lower-left edges in $M(m, n, k)$ have non-vanishing entries by construction.  By the Weyl group symmetry, the remaining outer edges also have this property. Thus these edges trace out a polygon, possibly degenerating to a segment, with no zeros on its outer edges.  
\begin{definition} We refer to the complement of the corner triangles of zeros in $M(m, n, k)$ as the {\bf polygon} of $M(m, n, k)$.  A zero in the interior of the polygon of $M(m, n, k)$ is called a {\bf proper zero}. Other terminology for zeros will be noted below. \end{definition} 

Opposing edges of this polygon have the same length. The horizontal edges have length $n-k+1$, the slanted edges have length $m-k+1$, and the vertical edges have length $k+1$.  When the polygon is a hexagon, one notes that the absolute values of the entries along the top, lower-left, and right-sided edges are constant and equal to, respectively, 
\begin{equation} \left(\begin{array}{c}m \\ k \end{array}\right),\ \left(\begin{array}{c} n \\ k \end{array}\right),\  \mbox{and}\ \left(\begin{array}{c}m+n-2k \\ m-k \end{array}\right).
\end{equation}
These binomial coefficients are composed of parts $m-k$, $n-k,$ and $k.$ 
These values apply to the degenerate cases (parallelogram or line segment) accordingly. The remaining edges link these values through products of binomial coefficients; the indices in the starting coefficient decrease by 1 as the indices in the terminal coefficient increase likewise. See Proposition 1 for the leftmost vertical edge.

We note the decomposition
\begin{equation}D_{12}\cong S_3\times C_2,\end{equation}
where $S_3$ represents the permutation subgroup generated by column switches and the Weyl group symmetry generates the two-element group $C_2$.  While column switches do not preserve $M(m, n, k)$ in general,  the polygon of $M(m, n, k)$ maps to the polygon of values in another $M(m_2, n_2, k_2)$, differing only by sign changes.  Thus there is a well-defined correspondence between the proper zeros of $M(m, n, k)$ and $M(m_2, n_2, k_2)$ under a column switch.

Of the column switches listed,  the C12 symmetry changes sign according to $k$ and inverts proper values in each column, and the C13 symmetry changes sign according to $n-j$ and reflects values across the northeasterly diagonal.  The column switch C123 rotates values by 120 degrees counter-clockwise and changes sign according to $m-k+i.$  

For example, the polygons of $M(3, 4, 2), M(3, 3, 1), M(4, 3, 2)$ below permute among themselves under the $D_{12}$ symmetries:

\begin{equation}
\left[
\begin{array}{cccc}
\phantom{-} 3  & \phantom{-}3  & \phantom{-}3   & \\
 -6 & -3 & \phantom{-} 0  & \phantom{-}3  \\
\phantom{-} 6 &\phantom{-} 0     & -3  & -3 \\
  & \phantom{-}6  & \phantom{-}6   & \phantom{-}3  \\
\end{array}
\right],\quad
\left[
\begin{array}{ccccc}
 \phantom{-}3  & \phantom{-}3   & \phantom{-}3   &  &  \\
 -3 & \phantom{-}0 &   \phantom{-}3  & \phantom{-}6   &   \\
  & -3     & -3 &\phantom{-}0 & \phantom{-}6 \\
     &    & -3   & -6 & -6 \\
\end{array}
\right],\quad
\left[
\begin{array}{cccc}
 \phantom{-}6  & \phantom{-}6   &    & \\
 -6 & \phantom{-}0 & \phantom{-}6  &    \\
 \phantom{-}3  & -3     & -3  & \phantom{-}3 \\
     & \phantom{-}3   & \phantom{-}0   & -3  \\
     &       & \phantom{-}3   & \phantom{-}3 \\
\end{array}\right].\notag
\end{equation}

Next, we note two additional recurrences from \cite{Do}; these impose further restrictions on proper zeros within $M(m, n, k)$.  

\begin{proposition}[Pascal's Recurrence] When all terms are defined,

\begin{equation}c_{m, n, k}(i, j) = c_{m, n, k}(i, j-1) + c_{m, n, k}(i-1, j).\notag\end{equation}
\end{proposition}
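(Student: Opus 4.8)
The plan is to read off the identity directly from the explicit formula of Proposition 2, equation (3), exploiting the fact that only the \emph{first} binomial factor there depends on the pair $(i,j)$. Writing
\begin{equation}
c_{m,n,k}(i,j) = \sum_{l=0}^{k} (-1)^l \left(\begin{array}{c}i+j-k \\ i-l \end{array}\right)\left(\begin{array}{c}m-l \\ k-l \end{array}\right)\left(\begin{array}{c}n-k+l \\ l \end{array}\right), \notag
\end{equation}
I note that the two rightmost factors $\binom{m-l}{k-l}$ and $\binom{n-k+l}{l}$ carry no dependence on $i$ or $j$, so they occur verbatim in the summands for $c_{m,n,k}(i,j)$, for $c_{m,n,k}(i,j-1)$, and for $c_{m,n,k}(i-1,j)$. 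Consequently the entire recurrence reduces to a statement about the first factor alone, summand by summand in $l$.

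First I would apply the classical Pascal identity to that leading factor, in the form
\begin{equation}
\left(\begin{array}{c}i+j-k \\ i-l \end{array}\right) = \left(\begin{array}{c}i+j-k-1 \\ i-l \end{array}\right) + \left(\begin{array}{c}i+j-k-1 \\ i-l-1 \end{array}\right). \notag
\end{equation}
The key observation is that the two pieces on the right match exactly the leading factors produced by the two index shifts: replacing $j$ by $j-1$ sends $i+j-k \mapsto i+j-k-1$ while leaving the lower index $i-l$ fixed, reproducing the first term; replacing $i$ by $i-1$ sends both $i+j-k \mapsto i+j-k-1$ and $i-l \mapsto i-l-1$, reproducing the second term. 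Distributing the common factors $(-1)^l\binom{m-l}{k-l}\binom{n-k+l}{l}$ across this split and then summing over $l$ from $0$ to $k$ therefore decomposes $c_{m,n,k}(i,j)$ as the sum $c_{m,n,k}(i,j-1) + c_{m,n,k}(i-1,j)$, which is the desired recurrence.

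The only real care needed lies in the boundary bookkeeping signalled by the hypothesis ``when all terms are defined.'' Here I would invoke the convention of extending by zero from Section 2, under which any binomial coefficient with an out-of-range lower index vanishes, so that the two summations on the right carry the same range $0 \le l \le k$ as the summation on the left and the term-by-term matching is unobstructed. I do not expect a genuine obstacle in this argument; the entire content is the single application of Pascal's identity to the $(i,j)$-dependent factor, with the remaining steps being the routine reindexing and the verification that the extend-by-zero convention keeps the three sums aligned at the edges of the polygon of $M(m,n,k)$.
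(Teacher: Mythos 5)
Your proof is correct. The term-by-term application of Pascal's identity to the single $(i,j)$-dependent factor $\binom{i+j-k}{i-l}$ is exactly right: under the index shifts $j\mapsto j-1$ and $i\mapsto i-1$ the leading factor becomes $\binom{i+j-k-1}{i-l}$ and $\binom{i+j-k-1}{i-l-1}$ respectively, the remaining factors are untouched, and the extend-by-zero convention keeps all three sums over the common range $0\le l\le k$. The hypothesis that all terms are defined guarantees $i+j-k\ge 1$, so the binomial Pascal identity applies with top index at least $1$ and the boundary cases $i-l=0$ and $i-l=i+j-k$ are handled by the vanishing convention. Note, however, that the paper gives no proof at all: the recurrence is quoted from the earlier work of Donley--Kim, where it arises representation-theoretically --- the matrix $M(m,n,k)$ is built by repeatedly applying the lowering operator $f$ to highest weight vectors, and the Leibniz rule for $f$ on a tensor product is precisely Pascal's recurrence on coordinates (this is why Theorem 1, step 3 of the present paper uses Pascal's recurrence to \emph{construct} $M(m,n,k)$, and why Proposition 2's closed formula is itself derived from that recurrence). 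Your argument runs in the opposite direction, taking the closed formula of Proposition 2 as the starting point and verifying the recurrence combinatorially. That is a legitimate and self-contained check --- arguably the cleaner one if the summation formula is taken as the definition of $c_{m,n,k}(i,j)$ --- but you should be aware that in the paper's own logical order the recurrence is the primitive fact and the formula is its consequence, so presenting your verification as \emph{the} proof would invert the paper's development rather than reproduce it.
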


\begin{proposition}[Reverse Recurrence] When all terms are defined,
\begin{equation}\notag a_1\, c_{m, n, k}(i,j)= a_2 \, c_{m, n, k}(i+1, j) + a_3 \, c_{m, n, k}(i, j+1)\end{equation}
where 
\begin{enumerate}
\item $a_1=(i+j-k+1)(m+n-i-j-k)$,
\item $a_2=(i+1)(m-i)$, and
\item $a_3=(j+1)(n-j)$.
\end{enumerate}
\end{proposition}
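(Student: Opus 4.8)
The plan is to derive the reverse recurrence entirely from Pascal's recurrence together with the R23 (Weyl-group) symmetry, keeping the argument inside the present combinatorial framework and using nothing beyond results already recorded. The guiding observation is that the Weyl-group symmetry rotates $M(m,n,k)$ by $180$ degrees, so it interchanges the two natural directions of propagation: the ``forward'' reading of Pascal's recurrence, which builds a cell from its left and upper-left neighbors, is carried to a ``reverse'' reading that expresses a cell through neighbors below and to the right. I therefore expect the reverse recurrence to be nothing more than Pascal's recurrence applied at the reflected point $(m-i,n-j)$ and transported back by the symmetry.

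To carry this out, write $P(a,b) = (m+n-k)!\,/\,\big(a!\,b!\,(m+n-k-a-b)!\big)$ for the multinomial factor in the R23 symmetry, so that that proposition reads $P(i,j)\,c_{m,n,k}(m-i,n-j) = (-1)^k P(m-i,n-j)\,c_{m,n,k}(i,j)$. First I would expand the reflected value by Pascal's recurrence,
\[ c_{m,n,k}(m-i,n-j) = c_{m,n,k}(m-i,\,n-j-1) + c_{m,n,k}(m-i-1,\,n-j). \]
Since the reflection sends $(i,j+1)\mapsto(m-i,n-j-1)$ and $(i+1,j)\mapsto(m-i-1,n-j)$, each of the three terms is the R23-image of a value at $(i,j)$, $(i,j+1)$, or $(i+1,j)$. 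Substituting the symmetry relation for each term, the common factor $(-1)^k$ cancels, and clearing $P(m-i,n-j)$ leaves a relation $c_{m,n,k}(i,j) = \rho_2\,c_{m,n,k}(i+1,j) + \rho_3\,c_{m,n,k}(i,j+1)$ whose coefficients are explicit products of ratios of the $P$'s.

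The only real work — and hence the main obstacle — is the bookkeeping that collapses these multinomial ratios to the stated coefficients; fortunately each ratio telescopes. For instance $P(m-i,n-j-1)/P(m-i,n-j) = (n-j)/(i+j-k+1)$ and $P(i,j)/P(i,j+1) = (j+1)/(m+n-i-j-k)$, so $\rho_3 = (j+1)(n-j)\,/\,\big[(i+j-k+1)(m+n-i-j-k)\big] = a_3/a_1$, and the analogous computation for the other term gives $\rho_2 = (i+1)(m-i)\,/\,\big[(i+j-k+1)(m+n-i-j-k)\big] = a_2/a_1$; clearing $a_1$ yields the asserted identity. Two points must be matched to the hypothesis ``when all terms are defined'': the R23 symmetry and Pascal's recurrence must be simultaneously valid at the invoked points, which is exactly the requirement that $(i,j)$ sit suitably inside the polygon of $M(m,n,k)$, and the boundary cases are absorbed by the extension-by-zero convention, under which both sides vanish together. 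As an independent check I would note the representation-theoretic reading: $a_2=(i+1)(m-i)$ and $a_3=(j+1)(n-j)$ are precisely the raising-operator matrix coefficients on the factors $V(m)$ and $V(n)$, while $a_1=(i+j-k+1)(m+n-i-j-k)$ is the corresponding coefficient on $V(m+n-2k)$, so the identity is simply the $e$-analogue, for $e=e\otimes1+1\otimes e$, of the $f$-recurrence recorded as Pascal's recurrence.
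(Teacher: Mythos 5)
Your proposal is correct and follows exactly the route the paper indicates: the paper states this recurrence as imported from the earlier work of Donley--Kim and remarks only that ``the reverse recurrence is Pascal's recurrence after application of the Weyl group symmetry,'' which is precisely the derivation you carry out, and your multinomial ratio computations (e.g.\ $P(m-i,n-j-1)/P(m-i,n-j)=(n-j)/(i+j-k+1)$) check out and yield $a_2/a_1$ and $a_3/a_1$ as claimed. The only added value in your write-up is that you supply the bookkeeping the paper leaves implicit; no discrepancy with the paper's approach.
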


The reverse recurrence is Pascal's recurrence after application of  the Weyl group symmetry.  It follows a rotated capital-L pattern, only now weighted by positive integers when $0< i<m$ and $0<j<n$.  

These recurrences immediately yield

\begin{proposition}[Censorship Rule]  Suppose $c_{m,n,k}(i, j)=0$ properly in $M(m, n, k)$.  Then adjacent zeros in $M(m, n, k)$  may occur only at the upper-right or lower-left entries. That is, in the following submatrix of $M(m, n, k)$, the bullets must be nonzero:
\begin{equation}\left[\begin{array}{ccc} \bullet & \phantom{-}\bullet & \phantom{-}*\\ \bullet & \phantom{-}0 & \phantom{-}\bullet\\ * & \phantom{-}\bullet & \phantom{-}\bullet \end{array}\right].\notag
\end{equation}
\end{proposition}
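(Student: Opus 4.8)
The plan is to read both recurrences off the $3\times 3$ neighborhood of the proper zero, working with the values $c_{m,n,k}$ rather than with matrix positions. First I would identify the eight neighbors explicitly. Writing the central entry as $c_{m,n,k}(i,j)$ and suppressing the fixed subscripts, the entries of the displayed submatrix are
\begin{equation}
\left[\begin{array}{ccc}
c(i-1,j) & c(i-1,j+1) & c(i-1,j+2)\\
c(i,j-1) & c(i,j) & c(i,j+1)\\
c(i+1,j-2) & c(i+1,j-1) & c(i+1,j)
\end{array}\right],\notag
\end{equation}
so the two starred corners are $c(i-1,j+2)$ and $c(i+1,j-2)$, and the six bullets are the remaining neighbors.

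Next I would show that the six bullets vanish simultaneously or not at all. Applying Pascal's recurrence at the central entry, at its right neighbor, and at its lower-right neighbor, and applying the reverse recurrence at the central entry, at its upper-left neighbor, and at its left neighbor, each of these six identities collapses---because $c(i,j)=0$---to a relation between exactly two of the bullets, never involving the starred corners. For example, Pascal at the center gives $c(i,j-1)=-c(i-1,j)$, Pascal at the right neighbor gives $c(i,j+1)=c(i-1,j+1)$, and the reverse recurrence at the center gives $a_2\,c(i+1,j)=-a_3\,c(i,j+1)$. Since the cell is proper, the weights $a_1,a_2,a_3$ of the reverse recurrence are strictly positive, so each relation is an equivalence of vanishing; chaining the six relations around the neighborhood forms a cycle through all six bullets and places them in a single class.

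The decisive step, and the one I expect to be the main obstacle, is to rule out the all-zero case. If all six bullets vanish, then $c(i,j),\,c(i,j+1),\,c(i+1,j-1),\,c(i+1,j)$ form a $2\times 2$ block of zeros, and I would prove that no such block can lie in the polygon by sliding it one column to the left: the reverse recurrence at the cell immediately left of the block's upper entry forces that cell to zero, and then a single application of Pascal's recurrence forces the cell below it to zero as well, reproducing a $2\times 2$ zero block shifted one column to the left. The key point making the slide legitimate is that its coefficient $a_1=(i+j-k+1)(m+n-i-j-k)$ stays positive, since moving left strictly decreases the weight $i+j-k$ and so keeps the factor $m+n-i-j-k$ positive; thus the slide can be iterated until the block of zeros reaches the left-hand or lower-left boundary of the polygon, whose entries are nonvanishing, a contradiction. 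The delicate part is the boundary bookkeeping: verifying that every invoked term is defined and that $a_1>0$ right up to the edge, and confirming that the march genuinely forces a zero onto a nonvanishing edge entry rather than stalling at a corner zero.
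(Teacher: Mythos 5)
Your proof is correct, and it reorganizes the argument along a genuinely different route from the paper's. The paper case-splits on the type of adjacency (horizontal, vertical, diagonal) and, from any adjacent pair of zeros, alternates the two recurrences to drive a diagonal ``serpentine'' staircase of zeros up and to the left until it lands on the top edge of the polygon, whose entries are nonvanishing. You instead first observe that the six bullets form a single vanishing class --- your six two-term relations trace out the 6-cycle
\begin{equation}
c(i,j-1)\sim c(i-1,j)\sim c(i-1,j+1)\sim c(i,j+1)\sim c(i+1,j)\sim c(i+1,j-1)\sim c(i,j-1),\notag
\end{equation}
so one zero bullet forces all six --- and then you kill the resulting $2\times 2$ zero block by marching it horizontally to the left. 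Both arguments run on the same engine (in either recurrence, two vanishing terms force the third, with positive weights in the interior), but your decomposition buys two things: it makes explicit that a single vanishing neighbor already forces the whole block (and, via one more step, both starred corners), and the horizontal block march is easier to police at the boundary than the diagonal serpentine, since each row of the polygon is a contiguous segment whose leftmost entry lies on the leftmost column or the lower-left edge and is nonzero by construction. The boundary bookkeeping you flag does close up: the block cannot stall at a corner zero, because before any of its four cells could exit the support region it would first have to coincide with a leftmost-column or lower-left-edge entry, which is nonvanishing, so the contradiction always arrives first. The one point worth stating more carefully is that the weights $a_1,a_2,a_3$ in your six relations are evaluated at three different cells (the center and its left and upper-left neighbors), not only at the proper zero itself; their positivity follows because an interior cell satisfies $0<i<m$, $0<j<n$, and $0<i+j-k<m+n-2k$, and the neighboring cells inherit the strict inequalities actually needed.
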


\begin{proof} \ \  First note that, in either recurrence relation, if any two terms in the relation are zero, then so is the third.  Now suppose a pair of proper zeros are horizontally adjacent.  Then, alternating the two relations, one begins a sequence
\begin{equation}\left[\begin{array}{ccc} *  & \phantom{-}* & \phantom{-}* \\ * &\phantom{-} *  & \phantom{-}* \\ * & \phantom{-}0 & \phantom{-}0 \end{array}\right] \rightarrow \left[\begin{array}{ccc}  \phantom{-}* &\phantom{-} * & * \\ * &  \phantom{-}0 & \phantom{-}* \\ * & \phantom{-}0 & \phantom{-}0 \end{array}\right]\rightarrow \left[\begin{array}{ccc} * & \phantom{-}* & \phantom{-}* \\ 0  & \phantom{-}0  & \phantom{-}* \\ * & \phantom{-}0 & \phantom{-}0 \end{array}\right]\rightarrow\left[ \begin{array}{ccc} 0 & \phantom{-}* & \phantom{-}* \\ 0  & \phantom{-}0  & \phantom{-}* \\ * & \phantom{-}0 & \phantom{-}0 \end{array}\right]\rightarrow\cdots.\notag\end{equation} Eventually this serpentine must place a zero at a nonzero entry on the top row of $M(m, n, k)$, a contradiction.
The case of two vertically adjacent zeros follows similarly.  Finally, for the diagonal case
\begin{equation}\left[\begin{array}{cc} 0 & \phantom{-}* \\ * & \phantom{-}0 \end{array}\right],\notag\end{equation}
either recurrence rule  begins the serpentine. \ \ $\qed$
\end{proof}

As an adjunct to censorship, certain pairs of zeros severely restrict nearby values. 

\begin{proposition}  With $X$ nonzero, the following allowable pairs of zeros fix adjacent values:
\begin{equation}
\left[\begin{array}{ccc}
 \phantom{-}X & & \\
 \phantom{-}0 & \phantom{-}X & \\
 -X & -X & \phantom{-}0 
\end{array}\right],\quad
\left[\begin{array}{ccc}
 \phantom{-}X& & \\
  -X & \phantom{-}0 & \\
 \phantom{-}0 & -X & -X  
\end{array}\right],\quad
\left[\begin{array}{ccc}
\phantom{-}0 & & \\
 \phantom{-}X & \phantom{-}X & \\
 -X & \phantom{-}0 & \phantom{-}X  
\end{array}\right].\notag
\end{equation}  
In particular, each triangle implies one of the following three equalities:
\begin{equation}(i+1)(m-i) = (i'+1)(m'-i') = (j+1)(n-j),\notag\end{equation}
where $i'=i+j-k$, $m'=m+n-2k$, and $c_{m,n,k}(i, j)$ corresponds to the middle entry of the leftmost column.
\end{proposition}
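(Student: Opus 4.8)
The plan is to fix coordinates via Theorem 1 and then to deploy the two recurrences of Propositions 6 and 7 in sequence: Pascal's recurrence to pin down the nonzero entries (the ``fix adjacent values'' claim), and the reverse recurrence, evaluated at the middle entry of the leftmost column, to extract the coefficient identity. Throughout I would label the middle entry of the leftmost column $c_{m,n,k}(i,j)$. By the convention of Theorem 1, a rightward step sends $j \mapsto j+1$ and a diagonal down-right step sends $i \mapsto i+1$ (so a straight-down step sends $(i,j)\mapsto(i+1,j-1)$). Reading each $3\times3$ array through this dictionary produces the nine index labels, with the two prescribed zeros occupying the indicated cells.

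First I would settle the ``fix adjacent values'' assertion using Pascal's recurrence $c_{m,n,k}(i,j) = c_{m,n,k}(i,j-1) + c_{m,n,k}(i-1,j)$ alone. In each triangle the two zeros together with the single free value $X$ determine every remaining entry, since each application of Pascal expresses one cell as a sum of two already known to us, and walking outward from the zeros reproduces exactly the displayed $\pm X$ pattern. For the first triangle, for instance, the central zero gives $c_{m,n,k}(i,j+1) = c_{m,n,k}(i-1,j+1)$, the lower zero gives $c_{m,n,k}(i+1,j) = -c_{m,n,k}(i,j+1)$, and the remaining entries follow likewise; the other two triangles are handled identically. This step is pure bookkeeping and uses no hypothesis beyond the two zeros.

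The coefficient identity then comes from a single evaluation of the reverse recurrence $a_1\, c_{m,n,k}(i,j) = a_2\, c_{m,n,k}(i+1,j) + a_3\, c_{m,n,k}(i,j+1)$ at the point $(i,j)$, whose three cells are precisely the middle-left entry, the entry to its right, and the entry to its lower right. The key observation is that in each triangle exactly one of these three cells is a zero: in the first it is the central cell, forcing $a_2 = a_3$; in the second it is the right-hand cell $c_{m,n,k}(i,j+1)$, forcing $a_1 = a_2$; in the third it is the lower-right cell $c_{m,n,k}(i+1,j)$, forcing $a_1 = a_3$. In every case the surviving two terms carry values $\pm X$, so the common factor $X \neq 0$ cancels and upgrades a mere proportionality into an equality of coefficients. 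Substituting $i' = i+j-k$ and $m' = m+n-2k$, one checks $a_1 = (i'+1)(m'-i')$, $a_2 = (i+1)(m-i)$, and $a_3 = (j+1)(n-j)$, so the three triangles realize the three pairwise equalities among these quantities.

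The only genuine obstacle is the coordinate bookkeeping forced by the skew between matrix position and the index pair $(i,j)$: one must track carefully which geometric neighbor corresponds to which shift of $(i,j)$, so as to apply the reverse recurrence at the correct cell and identify which of its three terms vanishes. Once that dictionary is in place, the Pascal forcing is automatic and the coefficient identity follows from the single reverse-recurrence evaluation, with the hypothesis $X \neq 0$ doing the essential work of turning proportionality into equality.
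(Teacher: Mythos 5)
Your proposal is correct and follows the paper's own (very terse) proof exactly: Pascal's recurrence forces the $\pm X$ pattern from the two zeros, and a single application of the reverse recurrence at the middle entry of the leftmost column, with one of its three terms vanishing, yields the corresponding coefficient equality after cancelling $X$. The only difference is that you supply the coordinate bookkeeping and case analysis that the paper leaves implicit.
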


\begin{proof}\ \   The restriction on values follows immediately from Pascal's recurrence.  The parameter conditions follow from the second recurrence.
$\qed$
\end{proof}

Finally, it will be convenient to note four degenerate cases of $M(m, n, k);$ the first three cases give all conditions for when the polygon is not a hexagon.  

\begin{enumerate}
\item  When $k=0$ and $n > 0,$ the parallelogram of nonzero entries of $M(m, n, 0)$ consists of entries in Pascal's triangle, with columns corresponding to segments of the triangle's rows.
When $n=k=0,$  $M(m, 0, 0)$ is an identity matrix of size $m+1,$
\item When $k=m$ with $n\ge m$, $M(m, n, m)$ contains no zeros; ignoring signs, the upper-right corner corresponds to the peak of Pascal's triangle, with diagonals corresponding to segments of the triangle's rows, 
\item When $m=n=k,$ $M(m, m, m)$ degenerates to a vertical segment of length $m+1$, and
\item When $m\ge 4$ even, $n=2$ and $k=1$, a central vertical triplet of zeros occurs, but only the central zero is proper.
\end{enumerate}

In fact, with $n\ge m$, the C23 symmetry carries $M(m, n-m, 0)$ to  $M(m, n, m)$  

Cases 1--4 are represented below by $M(2, 2, 0),$  $M(2, 4, 2),$  $M(2, 2, 2),$ and $M(4, 2, 1),$ respectively:
\begin{equation}
\left[
\begin{array}{ccccc}
\phantom{-}1  &\phantom{-}1  & \phantom{-}1   & \phantom{-} 0 &\phantom{-} 0\\
 \phantom{-}0 & \phantom{-}1 &  \phantom{-}2  & \phantom{-}3  &  \phantom{-}0\\
 \phantom{-}0 & \phantom{-}0  & \phantom{-}1  & \phantom{-}3 & \phantom{-}6\\
\end{array}\right],\quad
\left[\begin{array}{ccc}
 \phantom{-}1  & \phantom{-}1   &  \phantom{-}1 \\
 -3 & -2 &    -1  \\
\phantom{-}6  & \phantom{-}3     & \phantom{-}1 \\
\end{array}\right],\quad
\left[\begin{array}{c}
\phantom{-}1 \\
 -1   \\
\phantom{-} 1  \\
\end{array}\right],\quad
\left[\begin{array}{ccccc}
\phantom{-} 4  & \phantom{-}4   &  \phantom{-}0  & \phantom{-}0 & \phantom{-}0 \\
 -2 & \phantom{-}2 &\phantom{-} 6  & \phantom{-}0  & \phantom{-}0\\
\phantom{-}0 & -2     & \phantom{-}0  & \phantom{-}6 & \phantom{-}0\\
 \phantom{-}0 &  \phantom{-}0  & -2   & -2 & \phantom{-}4 \\
 \phantom{-}0    & \phantom{-}0  &  \phantom{-}0  & -2 & \phantom{-}4 \\
\end{array}\right].\notag
\end{equation}

\section{Diagonal Zeros}
\label{sec:4}

Consideration of dihedral reflections leads one to a large family of proper zeros through Propositions 4, 5, and the C23 symmetry.  In particular, these zeros occur when a column switch in the Regge symbol fixes an entry of $M(m, n, k)$ and changes parity.  Zeros of this type always occur as diagonal subsets in $M(m, n, k)$.

To see this, first observe that when $n=2k$, $M(m, 2k, k)$ is a square matrix of size $m+1$, and the C13 symmetry preserves the northeasterly diagonal.  In this case, the subgroup of the Regge group generated by C13 and R23, of type $C_2 \times C_2,$ preserves both the polygon and the diagonal.  Specifically, the top row of the general Regge symbol for these parameters
\begin{equation}\left|\left|\begin{array}{ccc} k & \phantom{-}m-k & \phantom{-}k \\ i & \phantom{-}j & \phantom{-}m-i-j-k \\ m-i & \phantom{-}2k-j & \phantom{-}i+j-k\end{array}\right|\right|.\end{equation} 
is unchanged by the R23 and C13 symmetries.

With $m\ne 2k$, four edges of the polygon now have length $k+1,$ upon which the subgroup acts transitively.  When $m=2k$, the polygon is a regular hexagon preserved by the dihedral subgroup $D_{12}$ of Section 3.  

\begin{proposition} Suppose $k>0$ and $m+k$ is odd. Then
\begin{equation}c_{m, 2k, k}(i, m+k-2i)\  =\ 0.\end{equation}
\end{proposition}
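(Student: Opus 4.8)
The plan is to realize each entry $c_{m,2k,k}(i,\,m+k-2i)$ as a fixed point of an order-two symmetry of $M(m,2k,k)$ that simultaneously multiplies the entry by $(-1)^{m+k}$. When $m+k$ is odd this factor is $-1$, so the entry equals its own negative and must vanish. The relevant symmetry is the C13 symmetry of Proposition 6, specialized to $n=2k$, exactly the ``column switch that fixes an entry and changes parity'' mechanism described just before the statement.

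First I would pin down where these entries sit. Since $c_{m,n,k}(i,j)$ occupies position $(i+1,\ i+j-k+1)$ in $M(m,n,k)$, taking $n=2k$ and $j=m+k-2i$ places the entry at row $i+1$ and column $m-i+1$. These satisfy $(\text{row})+(\text{column})=m+2$, so the family in the proposition is precisely the set of entries on the northeasterly diagonal of the square matrix $M(m,2k,k)$, the diagonal that (as recalled above) is preserved by C13.

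Next I would specialize Proposition 6. With $n=2k$ one has $m'=m+n-2k=m$ and $n-k=k$, so C13 is a self-map of $M(m,2k,k)$, and with $i'=i+j-k$ it reads
\[
c_{m,2k,k}(m-i-j+k,\ j)=(-1)^{2k-j}\,c_{m,2k,k}(i,j)=(-1)^{j}\,c_{m,2k,k}(i,j).
\]
Substituting $j=m+k-2i$ gives $m-i-j+k=i$, so the left-hand argument returns to $(i,\,m+k-2i)$ and the entry is genuinely fixed; meanwhile the sign becomes $(-1)^{j}=(-1)^{m+k-2i}=(-1)^{m+k}$, independent of $i$. The identity therefore collapses to
\[
c_{m,2k,k}(i,\,m+k-2i)=(-1)^{m+k}\,c_{m,2k,k}(i,\,m+k-2i),
\]
and for $m+k$ odd the factor $-1$ forces the entry to be zero. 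The hypothesis $k>0$ keeps us away from the degenerate case $n=2k=0$ (where $M(m,0,0)$ is an identity matrix) and guarantees the polygon is a genuine hexagonal region, so these are the diagonal (and generically proper) zeros promised at the start of the section.

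I expect no serious obstacle here: the whole argument is the fixed-point bookkeeping of a single involution. The only delicate point is the sign, where one must check that $(-1)^{n-j}$ with $n=2k$ reduces to the $i$-independent factor $(-1)^{m+k}$ on the diagonal, since it is this uniformity that makes every diagonal entry vanish at once. An equivalent route, matching the paper's Regge-symbol language, is to argue directly on the specialized Regge symbol for these parameters: interchanging columns $1$ and $3$ fixes the symbol along the diagonal because both outer top-row entries equal $k$, while the determinant-symmetry sign attached to that column switch is $(-1)^{m+k}$, again negative precisely when $m+k$ is odd.
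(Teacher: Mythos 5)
Your argument is correct and is essentially the paper's own proof: both identify the entries $c_{m,2k,k}(i,m+k-2i)$ as the anti-diagonal of the square matrix $M(m,2k,k)$, specialize the C13 symmetry to $n=2k$ (where it becomes a self-map fixing that diagonal), and read off the sign $(-1)^{j}=(-1)^{m+k}$ to force vanishing when $m+k$ is odd. The only slip is a citation label — the C13 symmetry is Proposition 5, not Proposition 6 (which is C123) — but the identity you actually use is the correct one.
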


\begin{proof}\ \ Note that coordinates in $M(m, n, k)$ are indexed by $x=i+j-k+1$ and $y=i+1,$ and  the indices for the diagonal in question are solutions to 
\begin{equation}(i+j-k+1) + (i+1) = m+2 \quad\mbox{or}\quad j= m+k-2i.\end{equation}
Substituting $n=2k$ into Proposition 5, one obtains
\begin{equation}c_{m, 2k, k}(m+k-i-j, j) = (-1)^{j}c_{m, 2k, k}(i, j).\end{equation}
For entries on the diagonal, this equation reduces to
\begin{equation}c_{m, 2k, k}(i, j) = (-1)^{m+k}c_{m, 2k, k}(i, j),\end{equation}
and $c_{m, 2k, k}(i, j)$ vanishes under the given parity condition.
$\qed$
\end{proof}

For example, we have $M(4, 6, 3)$ and $M(5, 4, 2)$, respectively:

\begin{equation}
\left[
\begin{array}{ccccc}
 \phantom{-}4  & \phantom{-}4   &  \phantom{-}4   & \phantom{-}4 &   \\
 -12 & -8 & -4  & \phantom{-}0   & \phantom{-}4 \\
 \phantom{-}20  & \phantom{-}8  &\phantom{-} 0  & -4 & -4 \\
-20    & \phantom{-}0   &  \phantom{-}8   &  \phantom{-}8 &  \phantom{-}4  \\
     & -20     & -20   &  -12 & -4  
\end{array}\right],\quad
\left[\begin{array}{cccccc}
 \phantom{-}10  & \phantom{-}10   & \phantom{-}10   &  &  &  \\
 -12 & -2 & \phantom{-}8  & \phantom{-}18   &  &  \\
  \phantom{-}6 & -6 & -8 & \phantom{-}0 &  \phantom{-}18 &  \\
   & \phantom{-}6  & \phantom{-}0  & -8 & -8 & \phantom{-}10 \\
     &     &  \phantom{-}6   &  \phantom{-}6 &  -2 & -10  \\
     &       &     &  \phantom{-}6 & \phantom{-}12 &  \phantom{-}10  
\end{array}\right].\notag
\end{equation}

Next we give a formula for the entries below the diagonal as single term expressions. There are $k$ proper zeros on the diagonal, and we denote the position of such a zero as measured from lower-left to upper-right. 

\begin{proposition} The value of the entry directly below the $t$-th proper zero on the diagonal is given as a single term expression by
\begin{equation}(-1)^{k+t+1} \left(\begin{array}{c} 2k \\ k \end{array}\right) \left( \begin{array}{c} k-1 \\ t-1 \end{array}\right) \frac{(2k-2t+1)!}{(2k-1)!}\ \frac{(\frac{m+k+1}2)!\ (\frac{m-k-3+2t}2)!}{(\frac{m-k-1}2)!\ (\frac{m+k+3-2t}2)!}.\end{equation}
\end{proposition}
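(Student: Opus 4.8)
The plan is to reduce the statement to a single first-order recurrence in $t$, solved from one initial value computed directly from Proposition 2. First I would fix coordinates. Since $n=2k$ makes $M(m,2k,k)$ square of size $m+1$, the diagonal equation $(i+j-k+1)+(i+1)=m+2$ together with the hypothesis that $m+k$ is odd locates the $k$ proper diagonal zeros at $(i_t,j_t)$ with $i_t=\tfrac{m+k+1-2t}{2}$ and $j_t=2t-1$, for $t=1,\dots,k$, ordered from lower-left ($t=1$, largest $i$) to upper-right. Reading ``directly below'' through the shear $(\mathrm{row},\mathrm{column})=(i+1,\,i+j-k+1)$ shows that the entry directly below the $t$-th zero is $d_t:=c_{m,2k,k}(i_t+1,\,j_t-1)$. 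The crucial bookkeeping observation is that this family interlocks: since $i_{t+1}+1=i_t$ and $j_{t+1}-1=j_t+1$, the entry $d_{t+1}$ is precisely the neighbor immediately to the right of the $t$-th zero, $d_{t+1}=c_{m,2k,k}(i_t,\,j_t+1)$.

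The key step is to build a recurrence from the two recurrences already in hand. Evaluating the Reverse Recurrence at the zero $(i_t,j_t)$ annihilates its left-hand side, leaving $a_2\,c_{m,2k,k}(i_t+1,j_t)+a_3\,c_{m,2k,k}(i_t,j_t+1)=0$, whose second summand is $a_3\,d_{t+1}$. For the first summand, Pascal's Recurrence gives $c_{m,2k,k}(i_t+1,j_t)=c_{m,2k,k}(i_t+1,j_t-1)+c_{m,2k,k}(i_t,j_t)=d_t$, again using that the diagonal entry vanishes. With $a_2=(i_t+1)(m-i_t)$ and $a_3=(j_t+1)(2k-j_t)$, this yields the clean first-order recurrence
\begin{equation}
d_{t+1}=-\frac{a_2}{a_3}\,d_t=-\frac{(m+k+3-2t)(m-k-1+2t)}{8\,t\,(2k-2t+1)}\,d_t. \notag
\end{equation}

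For the initial value I would compute $d_1=c_{m,2k,k}\!\left(\tfrac{m+k+1}{2},0\right)$ directly from Proposition 2: at $j=0$ the leading binomial $\binom{i-k}{i-l}$ is nonzero only for $l=k$, so the sum collapses to the single term $(-1)^k\binom{2k}{k}$. It then remains to verify that the proposed closed form $F(t)$ satisfies the same recurrence and the same initial value $F(1)=(-1)^k\binom{2k}{k}$. Unwinding $F(t+1)/F(t)$ produces a sign $(-1)$, the factor $\binom{k-1}{t}/\binom{k-1}{t-1}=(k-t)/t$, the factor $1/[(2k-2t+1)(2k-2t)]$ coming from the ratio of the $(2k-2t+1)!$ terms, and the two $m$-dependent factorial ratios $\tfrac{m-k-1+2t}{2}$ and $\tfrac{m+k+3-2t}{2}$; these collapse to exactly $-a_2/a_3$. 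Since $d_t$ and $F(t)$ agree at $t=1$ and satisfy the same recurrence, they coincide for all $t=1,\dots,k$.

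I expect the only genuine obstacle to be bookkeeping rather than ideas: correctly locating the diagonal and the ``below/right'' identifications through the shear, and then matching the factorial ratio $F(t+1)/F(t)$ to $-a_2/a_3$ without a sign or index slip. The conceptual heart---that the Reverse Recurrence kills its left side at a zero, while Pascal's Recurrence rewrites the south neighbor as the desired sub-diagonal entry thanks to that same zero---reduces the whole statement to one recurrence and a single boundary evaluation.
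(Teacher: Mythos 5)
Your proposal is correct and follows essentially the same route as the paper: both derive the first-order recurrence $d_{t+1}=-\tfrac{a_2}{a_3}\,d_t$ by applying the reverse recurrence at the $t$-th diagonal zero and using Pascal's recurrence to identify the south neighbor with the south-east neighbor, both anchor it at $d_1=(-1)^k\binom{2k}{k}$, and both then reconcile the resulting product with the stated factorial closed form (the paper by expanding the product via explicit product identities, you by checking the ratio $F(t+1)/F(t)$ — an equivalent computation).
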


\begin{proof} \ \ The coordinates for the $s$-th proper zero on the diagonal are 
\begin{equation}(i+1, i+j-k+1)=\bigg(\frac{m+k-2s+3}2, \frac{m-k+2s+1}2\bigg)\end{equation}
with
\begin{equation}i=\frac{m+k-2s+1}2,\quad j=2s-1.\end{equation}
 The entry directly below the first zero has value
\begin{equation}c_{m, 2k, k}\bigg(\frac{m+k+1}2, 1\bigg)=(-1)^k\left(\begin{array}{c}2k\\ k\end{array}\right).\notag\end{equation}
The second recurrence allows us to compute values below the diagonal by a sequence of factors: at the $s$-th proper zero, 
\begin{equation} \left[\begin{array}{cc} 0 & \phantom{-}x  \\ y & \phantom{-}y \end{array}\right]\qquad \longrightarrow \qquad x=-\frac{a_2}{a_3}y = -\frac{(m+k-2s+3)(m-k+2s-1)}{8s(2k-2s+1)}\ y.\notag \end{equation}
Thus the value of the entry below the $t$-th zero on the diagonal is
\begin{equation}(-1)^{k+t-1}\left(\begin{array}{c} 2k \\ k \end{array}\right) \prod\limits_{s=1}^{t-1}\frac{(m+k-2s+3)(m-k+2s-1)}{8s(2k-2s+1)}. \end{equation}
Since
\begin{equation}\prod\limits_{s=1}^{t-1}(2N-2s) = \frac{2^{t-1}(N-1)!}{(N-t)!},\qquad \prod\limits_{s=1}^{t-1}(2N+2s) = \frac{2^{t-1}(N+t-1)!}{N!}, \end{equation}
and
\begin{equation}\prod\limits_{s=1}^{t-1}(2N-2s+1) = \frac{(2N-1)! (N-t)!}{2^{t-1}(2N-2t+1)!(N-1)!}, \end{equation}
the result follows by substitution into (20).
$\qed$
\end{proof}

In particular, when $m$ even, $k$ odd and $t=\frac{k+1}2,$ the entry below the central zero is
\begin{equation}(-1)^{\frac{k+1}2}\  \frac{2(k+1)^2}{m(m+2)}\ \left(\begin{array}{c}\frac{m+k+1}2\\[0.5ex] \frac{k+1}2,\ \frac{k+1}2,\ \frac{m-k-1}2 \end{array}\right).\end{equation}
When $m$ is odd, $k$ even and $t=\frac{k}2+1,$ there is a central square
\begin{equation}\left[\begin{array}{cc} X &\phantom{-} 0\\ 0 & \phantom{-}X\end{array}\right]\end{equation}
with 
\begin{equation}X= (-1)^\frac{k}{2}\ \frac{k+2}{m+1}\left( \begin{array}{c}\frac{m+k+1}2\\[0.5ex] \frac{k}2,\ \frac{k+2}2,\ \frac{m-k-1}2 \end{array}\right).\end{equation}

Two other types of zero diagonals may be obtained by applying column switches to (13). Applying the C12 symmetry yields
\begin{proposition} Suppose $k>0$ and $n+k$ is odd. Then
\begin{equation}c_{2k, n, k}(n+k-2j, j)\ =\ 0.\end{equation}
\end{proposition}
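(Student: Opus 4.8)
The plan is to read off this statement from the zero diagonal (13) by transporting it through the C12 symmetry of Proposition 4, exactly as the preceding sentence indicates. The diagonal zeros (13) live in the matrices $M(m,2k,k)$, i.e.\ those whose \emph{middle} parameter equals $2k$, and they hold for every admissible first parameter $m$ with $m+k$ odd; the present statement instead asserts zeros in the matrices $M(2k,n,k)$, whose \emph{first} parameter equals $2k$. Since C12 interchanges the first two parameters while leaving $k$ untouched, it should carry the one family of zero diagonals precisely onto the other.

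Concretely, I would apply the C12 identity $c_{n,m,k}(j,i)=(-1)^k c_{m,n,k}(i,j)$ of Proposition 4 with first two parameters $2k$ and $n$ and with coordinates $(i,j)=(n+k-2j,\,j)$, obtaining
\begin{equation}
c_{2k,n,k}(n+k-2j,\,j)=(-1)^k\, c_{n,2k,k}(j,\,n+k-2j).\notag
\end{equation}
The entry on the right now sits in a matrix $M(n,2k,k)$ whose middle parameter is $2k$, so it is governed by (13) with the free first parameter named $n$ rather than $m$: that result reads $c_{n,2k,k}(i,\,n+k-2i)=0$ whenever $n+k$ is odd and $k>0$. Setting $i=j$ gives $c_{n,2k,k}(j,\,n+k-2j)=0$ under exactly the hypotheses of the present statement, and the harmless factor $(-1)^k$ then forces $c_{2k,n,k}(n+k-2j,\,j)=0$.

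The steps that deserve a moment of care are purely clerical. First, the parity hypothesis transforms transparently: because C12 does not disturb $k$, the condition ``$m+k$ odd'' attached to (13) becomes ``$n+k$ odd'' once the free parameter is called $n$, which is the stated hypothesis. Second, one should confirm that the reflection $(i,j)\mapsto(j,i)$ really carries the locus $i=n+k-2j$ in $M(2k,n,k)$ onto the locus $n+k-2i=j$ in $M(n,2k,k)$ cut out in the derivation of (13); this is the same index computation (compare (14)--(15)) read with the two arguments swapped. Finally, since the C12 identity is stated ``when defined,'' I would either verify that the relevant entries lie in the common domain or invoke the standing convention that the domain is extended by zero, so that the identity and its conclusion persist on the corner regions. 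I do not expect any genuine obstacle here: the whole content is the transport of an already-established zero locus across a single generator of the Regge group, and nothing in (13) resists that transport.
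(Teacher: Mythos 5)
Your proposal is correct and follows exactly the route the paper intends: the paper offers no separate proof of this proposition, stating only that it is obtained by ``applying the C12 symmetry'' to the diagonal zeros of (13), which is precisely the transport you carry out (and your index bookkeeping, $c_{2k,n,k}(n+k-2j,j)=(-1)^k c_{n,2k,k}(j,n+k-2j)$ followed by the relabeled (13), checks out).
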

This diagonal of zeros connects the midpoints of the horizontal edges of the polygon. By the censorship rule, no zeros on this diagonal are adjacent, and the diagonal is fixed by the C23 symmetry.  Applying the C13 symmetry to the parameters of Proposition 13 gives

\begin{proposition} Suppose $k>0$ is odd. Then
\begin{equation}c_{m, m, k}(i, i)\ =\ 0.\end{equation}
\end{proposition}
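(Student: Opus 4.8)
The plan is to obtain this statement as a direct corollary of the C13 symmetry (Proposition 4) applied to the diagonal zeros of Proposition 13, as the preceding sentence already indicates. The essential observation is that C13 carries $M(m,n,k)$ to $M(m',n,n-k)$ with $m'=m+n-2k$; specializing to the source parameters of Proposition 13, where the first parameter is $2k$, collapses $m'=2k+n-2k=n$, so that $M(2k,n,k)$ is sent to $M(n,n,n-k)$. This is a matrix with repeated first parameters, exactly of the shape $M(m,m,k)$ appearing in the statement, and its third parameter is $n-k$.

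First I would record Proposition 4 with $m=2k$:
\begin{equation}
c_{n,\,n,\,n-k}(n-i',\,j)=(-1)^{n-j}\,c_{2k,\,n,\,k}(i,j),\qquad i'=i+j-k.\notag
\end{equation}
Next I would substitute the location of the zeros supplied by Proposition 13, namely $i=n+k-2j$. A short computation gives $i'=(n+k-2j)+j-k=n-j$, whence $n-i'=j$, so the left-hand index becomes $(n-i',\,j)=(j,j)$, a point on the main diagonal. Since Proposition 13 guarantees that $c_{2k,n,k}(n+k-2j,\,j)=0$ for all admissible $j$ whenever $k>0$ and $n+k$ is odd, the signed identity forces
\begin{equation}
c_{n,\,n,\,n-k}(j,j)=0.\notag
\end{equation}
Because C13 is a bijection on entries, the whole source diagonal of zeros is carried onto the whole main diagonal of the image, so every diagonal entry vanishes.

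Finally I would reconcile parameters and parity. The hypothesis that $n+k$ is odd is equivalent to $n-k$ being odd, so the third parameter $n-k$ of the image is odd; moreover $n+k$ odd excludes $k=n$, giving $n-k>0$. Relabelling the image by $m\leftarrow n$, $k\leftarrow n-k$, and $i\leftarrow j$ then yields precisely $c_{m,m,k}(i,i)=0$ for odd $k>0$, and letting $j$ range over the admissible values recovers the entire diagonal. The only point requiring care is the index bookkeeping that confirms the C13 image of the Proposition 13 diagonal lands exactly on the main diagonal $(j,j)$; once that substitution is verified, the result is immediate and no further calculation is needed.
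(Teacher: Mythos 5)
Your proof is correct and takes essentially the same approach as the paper, which obtains this proposition precisely by applying the C13 symmetry to the parameters of Proposition 13; your index computation ($i'=n-j$, hence $n-i'=j$) and parity check ($n+k$ odd if and only if $n-k$ odd, with $n-k>0$ forced) supply exactly the details the paper leaves implicit. The only slip is a citation: in the paper the C13 symmetry is Proposition 5, not Proposition 4 (which is the C12 symmetry).
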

This diagonal of zeros connects the midpoints of the vertical edges of the polygon. Again, no zeros on this diagonal are adjacent, and this diagonal is fixed by the C12 symmetry. 

From above, $M(4, 6, 3)$  transforms into $M(6, 4, 3)$ and $M(4, 4, 1),$ respectively:

\begin{equation}
\left[\begin{array}{ccccc}
 \phantom{-}20  & \phantom{-}20   &     &  &   \\
 -20 & \phantom{-}0 & \phantom{-}20  &    &  \\
 \phantom{-}12  & -8  & -8  & \phantom{-}12 &  \\
-4    & \phantom{-}8   &  \phantom{-}0   &  -8 &  \phantom{-}4  \\
     & -4     & \phantom{-}4   & \phantom{-}4 & -4\\
  &  & -4 & \phantom{-}0 &\phantom{-} 4\\
  &  &  &\phantom{-} 4 & -4  
\end{array}\right],\quad
\left[\begin{array}{ccccccc}
 \phantom{-}4  & \phantom{-}4   & \phantom{-}4   & \phantom{-}4 &  &  & \\
 -4 & \phantom{-}0 & \phantom{-}4  & \phantom{-}8   & \phantom{-}12 &  & \\
   & -4 & -4 & \phantom{-}0 & \phantom{-}8 &  \phantom{-}20 & \\
   &   & -4  & -8 & -8 & \phantom{-}0 &  \phantom{-}20 \\
     &     &     &  -4 &  -12 & -20 & -20   
\end{array}\right].\notag
\end{equation}

Next suppose $m=n=2k$ with $k$ odd; see Figure 1 below.  The polygon in $M(2k, 2k, k)$ is now a regular hexagon with sides of length $k+1,$ and entries on three sides have constant absolute value $\left(\begin{array}{c} 2k\\ k\end{array}\right).$ The full $D_{12}$ subgroup preserves $M(2k, 2k, k)$, as the general entry has Regge symbol
\begin{equation}\left|\left|\begin{array}{ccc} k & \phantom{-}k & \phantom{-}k \\ i & \phantom{-}j & \phantom{-}3k-i-j\ \\ 2k-i & \phantom{-}2k-j & \phantom{-}i+j-k\end{array}\right|\right|.\end{equation} In particular, when $i=j=k$, the central entry is a fixed point under the full Regge group. Since $k$ is odd, the polygon now possesses three diagonals of zeros (six-pointed star) with several interesting consequences; first, when $k\ge 5$ odd, there is an equilateral triangle, centered about the central value and with sides of length 7, with nonzero entries of fixed absolute value:
\begin{equation}
\left[
\begin{array}{ccccccc}
\phantom{-}0 &  &  &  &  &  & \\
\phantom{-}X   &  \phantom{-}X   &  &  &  &   &  \\
  -X & \phantom{-}{0}  & \phantom{-}\mathbf{X}   &  &   &   & \\
 \phantom{-}{0}  & -X  & \mathbf{-X} & \phantom{-}{0} &  &  & \\
 \phantom{-}\mathbf{X}   &  \phantom{-}\mathbf{X}   &\phantom{-}  \fbox{0} &  \mathbf{-X} & \mathbf{-X} & \mathbf{} &\  \\
 -X  & \phantom{-}{0}    &  \phantom{-}\mathbf{X}   & \phantom{-} X &\phantom{-} {0} & -X &\  \\
 \phantom{-}0  & -X    & \mathbf{-X}   & \phantom{-} {0} &\phantom{-} X & \phantom{-}X & \phantom{-}0 \\
\end{array}\right].\notag
\end{equation} 
By (23), the nonzero entry $X$ is equal to 
\begin{equation}c_{2k, 2k, k}(k+1, k-1) = (-1)^{\frac{k+1}2}\  \frac{k+1}{6k}\ \left(\begin{array}{c}\frac{3k+3}2\\[0.5ex] \frac{k+1}2,\ \frac{k+1}2,\ \frac{k+1}2 \end{array}\right).\notag\end{equation}
Furthermore, the large triangle is an aggregation of the three smaller triangular zero pair patterns in Proposition 10.

Next, as each non-central zero on a diagonal is fixed by an order two subgroup in the Regge group, each orbit under the full Regge group contains 36 zeros. 
Since each Regge symmetry induces a linear change in indices, we have
\begin{theorem} For each odd $k>1$,  $c_{2k, 2k, k}(k, k)$ is a fixed point of the full Regge group and is at the center of both $M(2k, 2k, k)$ and a 36-pointed star of zeros in the five-dimensional Clebsch-Gordan domain space.
\end{theorem}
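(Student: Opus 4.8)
The plan is to establish two things in turn: that the central entry is fixed by all seventy-two Regge symmetries, and that a suitable neighboring diagonal zero has a Regge orbit of exactly thirty-six elements, symmetrically arranged about the center.

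For the fixed-point claim I would evaluate the general Regge symbol for $M(2k,2k,k)$ at $i=j=k$. The substitution collapses every entry to $k$, giving
\begin{equation}\left|\left|\begin{array}{ccc} k & \phantom{-}k & \phantom{-}k \\ k & \phantom{-}k & \phantom{-}k \\ k & \phantom{-}k & \phantom{-}k\end{array}\right|\right|,\notag\end{equation}
a constant magic square. Since the Regge group acts on symbols by row permutations, column permutations, and transpose, all of which fix a constant matrix, the point $c_{2k,2k,k}(k,k)$ is fixed by the whole group; it is moreover itself a zero, being the common intersection of the three diagonals of Propositions 11, 13, and 14. In matrix coordinates this entry is $(i+1,i+j-k+1)=(k+1,k+1)$, the center of the $(2k+1)\times(2k+1)$ matrix $M(2k,2k,k)$, which settles the centering claim for the matrix.

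For the star I would apply orbit-stabilizer to a single non-central diagonal zero. Taking the zero $c_{2k,2k,k}(i,i)=0$ of Proposition 14 with $i\ne k$, this zero has Regge symbol
\begin{equation}\left|\left|\begin{array}{ccc} k & \phantom{-}k & \phantom{-}k \\ i & \phantom{-}i & \phantom{-}3k-2i \\ 2k-i & \phantom{-}2k-i & \phantom{-}2i-k\end{array}\right|\right|.\notag\end{equation}
The key point is that for $i\ne k$ the five quantities $k,\,i,\,2k-i,\,3k-2i,\,2i-k$ are pairwise distinct, so the entries $3k-2i$ and $2i-k$ occur uniquely, at positions $(2,3)$ and $(3,3)$. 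Any determinant symmetry fixing the symbol must fix both of these positions. A transpose-type symmetry $(r,c)\mapsto(\rho(c),\gamma(r))$ would require $\rho(3)=3$ from $(3,3)$ and $\rho(3)=2$ from $(2,3)$, a contradiction, so no transpose-type symmetry survives; a non-transpose symmetry $(r,c)\mapsto(\rho(r),\gamma(c))$ is forced by the same two positions to have $\rho=\mathrm{id}$ and $\gamma(3)=3$, leaving only the identity and the column swap C12, the latter genuinely fixing the symbol because its first two columns coincide. Hence the stabilizer is $\{\mathrm{id},\,\mathrm{C12}\}$, of order two, and by orbit-stabilizer the orbit has $72/2=36$ elements; as the Regge symmetries preserve the zero locus of $c$ (cf. Propositions 3--6), all thirty-six are zeros.

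I expect the stabilizer computation, and specifically the exclusion of the transpose-type symmetries, to be the only genuine obstacle; the clean device is the uniqueness of $3k-2i$ and $2i-k$, which replaces an enumeration of the seventy-two elements by the single numerical contradiction above. It then remains only to read off the star: since each Regge symmetry induces a linear (indeed permutation) action on the symbol entries and fixes the constant central symbol, the thirty-six zeros are images of one non-central zero under linear maps fixing that center, hence lie symmetrically about $c_{2k,2k,k}(k,k)$. Finally, because the three diagonals of Propositions 11, 13, and 14 are permuted by the column-switch $S_3$ inside $D_{12}$, this single orbit already meets all three diagonal directions, so the six-pointed star seen within $M(2k,2k,k)$ extends to the full thirty-six-pointed star in the five-dimensional domain.
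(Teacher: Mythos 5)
Your proposal is correct and follows essentially the same route as the paper: the constant magic square $||k\,k\,k;k\,k\,k;k\,k\,k||$ shows the center is fixed by all $72$ symmetries, and orbit--stabilizer applied to a non-central diagonal zero with stabilizer of order two yields the orbit of $36$ zeros, arranged as a star because the symmetries act by linear changes of indices fixing the center. The only difference is that you explicitly verify the order-two stabilizer $\{\mathrm{id},\mathrm{C12}\}$ via the uniqueness of the entries $3k-2i$ and $2i-k$, a detail the paper merely asserts ("each non-central zero on a diagonal is fixed by an order two subgroup"), so your write-up is, if anything, more complete.
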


\begin{figure}
\sidecaption[t]
\includegraphics[scale=.5]{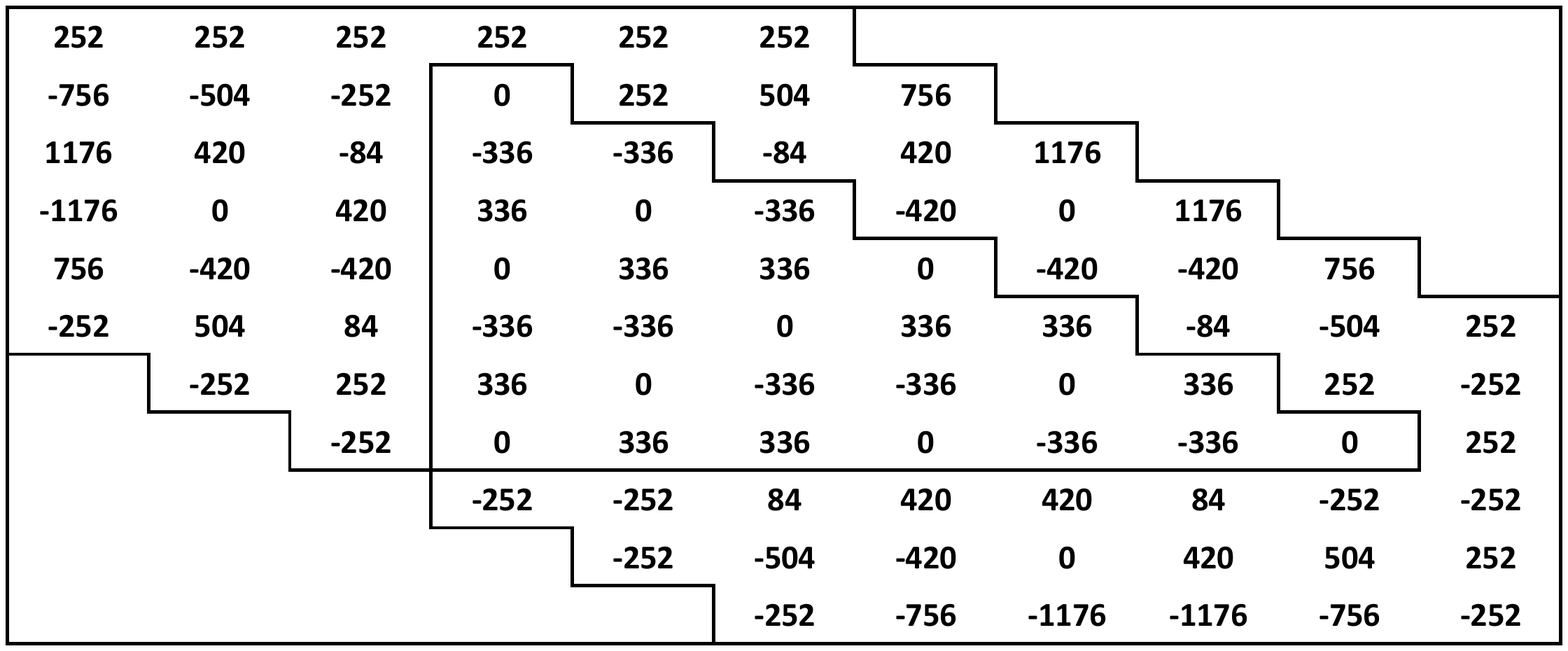}
\caption{M(10, 10, 5) with $m=n=2k$ and $k=5$ odd}
\label{M(10,10, 5)}
\end{figure}

\section{Four cases and some special values}
\label{sec:5}

Following \cite{RRV}, we now focus on behavior of $c_{m, n, k}(i, j)$ near the center of $M(m, n, k).$  The shape of the center is determined by parities of $m,$ $n,$ and $k$; these are determined by the top line of the Regge symbol, and, through the $D_{12}$ symmetries, we can narrow our results  to four cases: 

\begin{table}
\caption{Central area of $M(m, n, k)$ based on parity}
\label{tab:1}
\begin{tabular}{p{1.8cm}p{1.8cm}p{1.8cm}p{3cm}}
\hline\noalign{\smallskip}
  $m-k$ & $n-k$ & $k$ & Center \\
 \noalign{\smallskip}\svhline\noalign{\smallskip}
Even & even & even & single entry ($\ne 0$)\\
Odd & even & even & size 2 square  \\
Even & odd & odd & size 2 square\\
Odd & odd  & odd & single entry ($=0$)\\
\noalign{\smallskip}\hline\noalign{\smallskip}
\end{tabular}
\end{table}

Suppose both $m$ and $n$ are even.  Recall that $M(m, n, k)$ is a matrix of size $m+1$ by $m+n-2k+1.$  Thus there is a central entry, $c_{m, n, k}(\frac{m}2, \frac{n}2)$, which we refer to as the {\bf central value} of $M(m, n, k)$, and, with its adjacent entries, we obtain a square submatrix of size 3, which we refer to as the {\bf central square} of $M(m, n, k).$  As seen in the fourth degenerate case, a central square for $k>0$ can only have non-proper zeros when  $n=2$, $k=1,$ and $m\ge 4$ is even.

\begin{proposition}
Suppose $m$ and $n$ are even. The central value $c_{m, n, k}(\frac{m}2,\frac{n}2)=0$ if and only if $k$ is odd.
\end{proposition}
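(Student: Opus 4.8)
The plan is to establish the two implications separately: the forward direction ($k$ odd $\Rightarrow$ vanishing) from the Weyl symmetry at a fixed point, and the reverse direction ($k$ even $\Rightarrow$ nonvanishing) from an explicit Dixon-type evaluation. Since $m$ and $n$ are even, the entry $c_{m,n,k}(\frac m2,\frac n2)$ is the genuine center of $M(m,n,k)$, and it is a fixed point of the R23 (Weyl) symmetry, because $m-i=i$ and $n-j=j$ there. First I would specialize Proposition 3 to $i=\frac m2$, $j=\frac n2$. The two multinomial prefactors then coincide, both equal to $\left(\begin{array}{c}m+n-k \\ \frac m2,\ \frac n2,\ * \end{array}\right)$ with $*=\frac{m+n}2-k\ge 0$ (using $k\le\min(m,n)$), and hence nonzero; dividing them out collapses the identity to $c_{m,n,k}(\frac m2,\frac n2)=(-1)^k\,c_{m,n,k}(\frac m2,\frac n2)$. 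For $k$ odd this forces the central value to vanish, which is the easy half.

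For $k$ even I would show the central value is a nonzero integer. Substituting $i=\frac m2$, $j=\frac n2$ into the alternative expression (4) writes it as the alternating sum
\begin{equation}
c_{m,n,k}\!\left(\frac m2,\frac n2\right)=\sum_{l=0}^{k}(-1)^l\left(\begin{array}{c}\frac{m+n}2-k\\ \frac m2-l\end{array}\right)\left(\begin{array}{c}\frac m2\\ k-l\end{array}\right)\left(\begin{array}{c}\frac n2\\ l\end{array}\right).\notag
\end{equation}
I would then recognize this as a terminating, well-poised ${}_{3}F_{2}$ at unit argument, namely Dixon's form ${}_{3}F_{2}(a,b,c;1+a-b,1+a-c;1)$ with $a=-k$, $b=-\frac n2$, $c=-\frac m2$: the two lower parameters $\frac n2-k+1$ and $\frac m2-k+1$ are exactly $1+a-b$ and $1+a-c$. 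Applying Dixon's identity (a terminating version of which this work reproves) collapses the sum to the single closed form
\begin{equation}
c_{m,n,k}\!\left(\frac m2,\frac n2\right)=(-1)^{k/2}\left(\begin{array}{c}\frac{m+n-k}2\\[0.5ex] \frac{m-k}2,\ \frac{n-k}2,\ \frac k2\end{array}\right).\notag
\end{equation}

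Because $k$ is even (and $m,n$ even), the three lower parts $\frac{m-k}2,\frac{n-k}2,\frac k2$ are nonnegative integers summing to the upper index $\frac{m+n-k}2$, so this multinomial coefficient is a genuine positive integer; the central value is therefore $(-1)^{k/2}$ times a positive integer and cannot vanish. I expect the main obstacle to be the passage through Dixon's identity in the terminating regime: with $a=-k$ a negative integer, the classical Gamma-function form of Dixon presents a ratio of poles (schematically $\Gamma(1-k/2)/\Gamma(1-k)$), so one must invoke the finite/terminating version, or a careful limit, to extract the multinomial value. It is precisely the parity hypothesis, with $k$ even making $k/2$ integral, that resolves the half-integer arguments into honest factorials with no surviving zero, which is exactly why nonvanishing holds for even $k$ and fails for odd $k$.

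As a self-contained alternative that sidesteps any appeal to Dixon (and any circularity worry), I would instead verify the displayed closed form directly by induction, propagating a known nonzero boundary value inward through Pascal's recurrence (Proposition 8) and the reverse recurrence (Proposition 9); the latter, whose weights are strictly positive in the interior, is well suited to expressing the central value through its neighbors. In either route the nonvanishing for even $k$ reduces to the manifest positivity of a multinomial coefficient, and the checks $c_{2,4,2}(1,2)=-2$ and $c_{4,4,2}(2,2)=-6$ confirm both the sign $(-1)^{k/2}$ and the multinomial value.
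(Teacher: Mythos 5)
Your proof is correct, and the forward implication is exactly the paper's: the center is a fixed point of the R23 symmetry, the two multinomial prefactors in Proposition 3 coincide and are nonzero there, and $c_{m,n,k}(\tfrac m2,\tfrac n2)=(-1)^k c_{m,n,k}(\tfrac m2,\tfrac n2)$ kills the value for $k$ odd. Where you genuinely diverge is the converse. The paper never evaluates the central entry: it assumes the center vanishes, notes that Pascal's recurrence forces the two entries in the hook above the zero to be $Y$ and $-Y$, applies the reverse recurrence at $i=\tfrac m2$, $j=\tfrac n2-1$ to get $a_1Y=a_2X$ for the entry $X$ directly below the center, uses strict positivity of $a_1,a_2$ to conclude $X$ and $Y$ share a sign, and then observes that the $180$-degree Weyl rotation carries $-Y$ to $X$, so the symmetry must be sign-reversing and $k$ is odd. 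You instead evaluate the center in closed form as $(-1)^{k/2}$ times the multinomial coefficient with top $\frac{m+n-k}{2}$ and parts $\frac{m-k}{2},\frac{n-k}{2},\frac{k}{2}$, via the terminating well-poised Dixon sum; your parameter identification $a=-k$, $b=-\tfrac n2$, $c=-\tfrac m2$ checks out, and there is no circularity since the paper's own proof of that evaluation (Theorem 3, by induction on Lemma 1) does not use this proposition. The trade-off: the paper's argument is purely local, needs only the two recurrences and the symmetry, and is available before any summation identity has been established; yours proves strictly more (the exact value and sign, which the paper defers to Theorem 3) at the cost of importing Dixon's identity or forward-referencing it. The one soft spot is your sketched "self-contained alternative": propagating nonzero boundary values inward to the center through the recurrences is not a routine induction (intermediate entries can and do vanish), and the paper's actual induction for the closed form runs over $(m,n,k)\mapsto(m+2,n+2,k+2)$ via the four-term Lemma 1 rather than inward within a single $M(m,n,k)$; as stated that alternative would need substantially more detail to stand on its own.
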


\begin{proof} 
If $k$ is odd then the Weyl group symmetry implies
\begin{equation}c_{m, n, k}(\frac{m}2, \frac{n}2) = -c_{m, n, k}(\frac{m}2, \frac{n}2),\end{equation}
and the central value vanishes.

In the other direction, suppose the central value vanishes. Consider the central square
\begin{equation}\left[\begin{array}{ccc} -Y  & \phantom{-}* & \phantom{-}* \\  \phantom{-}Y & \phantom{-}0  & \phantom{-}* \\  \phantom{-}* & \phantom{-}X & \phantom{-}X \end{array}\right]\notag\end{equation}
with $X$ and $Y$ nonzero.
From the lower left hook and the second recurrence rule with $i=\frac{m}2$ and $j=\frac{n}2-1$, we have
\begin{equation}a_1Y =a_2X,\notag\end{equation}
and positivity of $a_i$ implies that $X$ and $Y$ have the same parity.  Since $-Y$ and $X$ have opposite parity, the Weyl group symmetry switches signs and $k$ is odd.
$\qed$
\end{proof}

\begin{remark}   This condition is equivalent to  the Regge symbol having  matching bottom rows with  $J=m+n-k$ odd. This result also corresponds to the linearization formula for products of Legendre polynomials, as seen, for instance, in Corollary 6.8.3 in \cite{AAR}, and it may be shown directly using the Weyl group and the Casimir operator.  In the physics literature,  zeros of this type, or their translates under the Regge group, are referred to as ``trivial" zeros; all diagonal zeros are of this type.  Trivial zeros also correspond to indices outside the polygons and those omitted under raising or lowering operators.

In turn, the proposition may be interpreted as a ``gap 4" result when tensoring vectors of weight zero, in contrast with the usual Clebsch-Gordan decomposition, which corresponds to ``gap 2."  Since a contribution only occurs for $k$ even,
\begin{equation}f^{m/2}\phi_m\otimes f^{n/2}\phi_n = \sum\limits_{k'=0}^{min(m, n)/2} C_{m, n, 2k'}(m/2, n/2) f^{m/2+n/2-2k'} \phi_{m, n, 2k'}.\end{equation}
Note that the vectors in the sum have nonzero coefficients and correspond to irreducible constituents $V(m+n-4k')$ for $0\le k' \le \frac12\min(m, n).$ 
\end{remark}

A first step to computing near the center of $M(m, n, k)$ requires knowing either the central value or a near central value.  To compute these values inductively, one first notes some values of $c_{m, n, k}(i, j)$ for small $k$ and a four-term recurrence relation. One obtains the following directly from either summation formula:
\begin{equation}c_{m, n, 0}(i, j) = \left(\begin{array}{c} i+j \\ i\end{array}\right),\quad c_{m, n, 1}(i, j) = \left(\begin{array}{c} i+j \\ i\end{array}\right)\frac{mj-ni}{i+j},\end{equation}
\begin{equation}c_{m, n, 2}(i, j) = \left(\begin{array}{c} i+j \\ i\end{array}\right)\ \frac{jm(j-1)(m-1)-2ij(m-1)(n-1) + in(i-1)(n-1)}{(i+j)(i+j-1)}\end{equation}
Zeros corresponding to $k=1, 2$ are further classified in \cite{Va} and \cite{Lo}, respectively.

Next
\begin{lemma} When all terms are defined,
\begin{equation}c_{m+2, n+2, k+2}(i+1, j+1) + c_{m, n, k}(i, j)
= c_{m, n+2, k+2}(i, j+1) + c_{m+2, n, k+2}(i+1, j).\notag
\end{equation}
\end{lemma}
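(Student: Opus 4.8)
The plan is to prove the identity by direct substitution of the closed form (4) into each of the four terms, exploiting the fact that all four parameter triples arise from $(m,n,k)$ by shifts of size at most two -- precisely the increments governed by Pascal's identity. Writing $P=i+j-k$, $a=m-i$, and $b=n-j$, formula (4) renders each term as an alternating sum over a single index $l$ of a product of three binomials. The organizing observation is how the combined shift $(m,n,k;i,j)\mapsto(m+2,n+2,k+2;i+1,j+1)$ acts on these three slots: the top argument $P$ is unchanged while $a\mapsto a+1$ and $b\mapsto b+1$; under the mixed shift producing $c_{m,n+2,k+2}(i,j+1)$ only $b$ increases and $P$ drops to $P-1$; under the mixed shift producing $c_{m+2,n,k+2}(i+1,j)$ only $a$ increases and $P$ drops to $P-1$; and the unshifted term is $c_{m,n,k}(i,j)$ itself. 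Thus the three binomials of the four terms differ only by single applications of Pascal's rule in each slot.

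Concretely, I would first expand the fully shifted term $T_1=c_{m+2,n+2,k+2}(i+1,j+1)$ by applying Pascal's identity simultaneously in all three slots, namely $\binom{P}{i+1-l}=\binom{P-1}{i+1-l}+\binom{P-1}{i-l}$, $\binom{a+1}{k+2-l}=\binom{a}{k+2-l}+\binom{a}{k+1-l}$, and $\binom{b+1}{l}=\binom{b}{l}+\binom{b}{l-1}$. This writes $T_1$ as an alternating sum of $2^3=8$ elementary products. The second step is to recognize the two mixed terms on the right side as partial sums of these eight: $T_3=c_{m,n+2,k+2}(i,j+1)$ collects exactly the two products whose top factor is $\binom{P-1}{i-l}$ and whose middle factor is $\binom{a}{k+2-l}$ (its bottom factor $\binom{b+1}{l}$ being the undivided sum of the two $b$-pieces), while $T_4=c_{m+2,n,k+2}(i+1,j)$ collects the two products whose top factor is $\binom{P-1}{i+1-l}$ and whose bottom factor is $\binom{b}{l}$ (its middle factor $\binom{a+1}{k+2-l}$ undivided). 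These four products are distinct, so $T_1-T_3-T_4$ equals the alternating sum of the remaining four pieces.

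The final step is to check that these four leftover pieces sum to $-T_2=-c_{m,n,k}(i,j)$. Reindexing $l$ by $+1$ or $+2$ in each so as to normalize the middle binomial to $\binom{a}{k-l}$ (the middle factor of $T_2$), two of the four become identical with opposite sign and cancel, and the surviving two share the factor $\binom{a}{k-l}\binom{b}{l}$ and recombine through
\begin{equation}
\binom{P-1}{i-l}+\binom{P-1}{i-1-l}=\binom{P}{i-l}\notag
\end{equation}
into exactly the summand of $-T_2$. Assembling the three steps gives $T_1+T_2=T_3+T_4$, as claimed.

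The difficulty here is not any single hard identity but the bookkeeping: matching the eight pieces of $T_1$ against $T_3$ and $T_4$, and, above all, controlling the boundary terms produced by the index shifts. I would handle this by adopting throughout the convention that a binomial with negative or out-of-range lower index vanishes, so that every sum may be taken over all integers $l$ and the reindexings introduce no stray boundary contribution; the hypothesis ``when all terms are defined'' is exactly what guarantees $i+j-k\ge1$ (so the top indices $P-1$ are nonnegative) and keeps each shifted triple inside its cone. A more conceptual but less self-contained route would read $c_{m,n,k}(i,j)$ off the transvectant generating polynomial $(a_1b_2-a_2b_1)^{k}(a_1x_1+a_2x_2)^{m-k}(b_1x_1+b_2x_2)^{n-k}$ and deduce the relation by factoring a single power of $a_1b_2-a_2b_1$; since that requires separately tracking the normalization between the monomial and weight-vector bases, I would keep the direct computation as the primary argument.
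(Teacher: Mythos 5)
Your argument is correct, but it takes a genuinely different route from the paper's. The paper proves the lemma in three lines by chaining the one-parameter shift recurrences (7.2)--(7.5) of \cite{Do}: it first splits $c_{m+2,n+2,k+2}(i+1,j+1)$ into $c_{m+1,n+2,k+2}(i,j+1)+c_{m+2,n+1,k+2}(i+1,j)$, then applies the analogous single-step relations once more in $m$ and in $n$, whereupon the two intermediate terms $c_{m,n+1,k+1}(i,j)$ and $c_{m+1,n,k+1}(i,j)$ cancel against each other up to the leftover $-c_{m,n,k}(i,j)$. You instead verify the identity directly from the closed form (4), applying Pascal's rule simultaneously in all three binomial slots of the fully shifted term, matching four of the resulting eight pieces against the two mixed terms, and collapsing the remaining four (after reindexing in $l$) to $-c_{m,n,k}(i,j)$; I checked the bookkeeping and the four leftover pieces do cancel and recombine exactly as you describe, with the piece carrying $\binom{P-1}{i+1-l}\binom{a}{k+2-l}\binom{b}{l-1}$ cancelling the piece carrying $\binom{P-1}{i-l}\binom{a}{k+1-l}\binom{b}{l}$ after the shift $l\mapsto l+1$, and the other two recombining via Pascal in the top slot. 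What the paper's route buys is brevity and a conceptual reading of the two-step shift as a composition of one-step shifts already established elsewhere; what yours buys is self-containedness --- it needs only formula (4) and the vanishing convention for out-of-range binomials, not the external recurrences of \cite{Do} --- at the cost of an eight-term case analysis. Your handling of the boundary issues (summing over all integers $l$ and invoking ``when all terms are defined'' to ensure $i+j-k\ge 1$ so that the top index $P-1$ is nonnegative) is the right way to make the reindexings clean.
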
 

\begin{proof}\ \ Using formulas (7.2) through (7.5) in   \cite{Do}, we have
\begin{eqnarray}&&\hspace{-30pt}c_{m+2, n+2, k+2}(i+1, j+1)\notag\\
&=& c_{m+1, n+2, k+2}(i, j+1)+c_{m+2, n+1, k+2}(i+1, j)\notag\\[1.0ex]
&=& c_{m, n+2, k+2}(i, j+1) + c_{m, n+1, k+1}(i, j)\notag\\ 
&&\qquad\qquad\qquad\qquad + c_{m+2, n, k+2}(i+1, j) -c_{m+1, n, k+1}(i, j)\notag\\[1.0ex]
&=& c_{m, n+2, k+2}(i, j+1) + c_{m+2, n, k+2}(i+1, j) - c_{m, n, k}(i, j). \notag
\end{eqnarray} 
\end{proof}

See \cite{Ra} for a normalized version of the lemma, along with normalized versions of \cite{Do}, (7.2), (7.3).  As a special case of the lemma, we obtain another proof of Dixon's Identity, first proven in \cite{Fj}. Many alternative proofs exist; see, for instance, \cite{GS}, \cite{Ek}, and \cite{Wa}.

\begin{theorem}[Dixon's Identity] When $m, n,$ and $k$ are even, the central value
\begin{equation}c_{m, n, k}(\frac{m}2, \frac{n}2)\ =\sum_{l=0}^{k} (-1)^l  \left(\begin{array}{c}\frac{m+n-2k}2 \\ \frac{m}2-l \end{array}\right)\left(\begin{array}{c}\frac{m}2 \\ k-l \end{array}\right)\left(\begin{array}{c}\frac{n}2 \\ l \end{array}\right)=\ (-1)^\frac{k}2 \left(\begin{array}{c} \frac{m+n-k}2\\ \frac{m-k}2,\ \frac{n-k}2,\ \frac{k}2\end{array}\right).\notag\end{equation}
\end{theorem}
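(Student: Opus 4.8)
The plan is to recognize that the first equality in the statement is nothing more than formula (4) evaluated at the central coordinate $(i,j)=(\tfrac m2,\tfrac n2)$: there $i+j-k=\tfrac{m+n-2k}2$, $m-i=\tfrac m2$, and $n-j=\tfrac n2$, so the displayed summation is literally $c_{m,n,k}(\tfrac m2,\tfrac n2)$ with no work required. Thus the whole content of the theorem is the second equality, and it suffices to prove that the central value $V(m,n,k):=c_{m,n,k}(\tfrac m2,\tfrac n2)$ equals $T(m,n,k):=(-1)^{k/2}\,\frac{(\frac{m+n-k}2)!}{(\frac{m-k}2)!\,(\frac{n-k}2)!\,(\frac k2)!}$ whenever $m,n,k$ are even (recall that Table 1 guarantees a genuine single central entry in exactly this parity case). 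Combining the two equalities then gives Dixon's Identity in the stated summation form.

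First I would specialize the Lemma to the centers of the four matrices it relates. Since the center of $M(m,n,k)$ is $(\tfrac m2,\tfrac n2)$, the choice $i=\tfrac m2$, $j=\tfrac n2$ makes the shift $i\mapsto i+1,\ j\mapsto j+1$ together with $(m,n,k)\mapsto(m+2,n+2,k+2)$ carry every argument exactly onto the central entry of the corresponding matrix. This turns the Lemma into the recurrence
\begin{equation}V(m+2,n+2,k+2)+V(m,n,k)=V(m,n+2,k+2)+V(m+2,n,k+2),\notag\end{equation}
a relation purely among central values that preserves the parity of all three parameters.

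Next I would verify that $T$ satisfies this same recurrence. Writing $a=\tfrac{m-k}2,\ b=\tfrac{n-k}2,\ c=\tfrac k2$, the four terms become signed ratios of $(a+b+c)!$ and $(a+b+c+1)!$ over $a!,b!,c!$ with one index shifted, and a short computation shows that both sides reduce to $(-1)^c\frac{(a+b+c)!}{a!\,b!\,c!}\cdot\frac{-(a+b)}{c+1}$, so $T$ obeys the recurrence as well. It then remains to match base cases by a double induction: primarily on the even parameter $k$, and secondarily on $m+n$ inside each layer of fixed $k$. The layer $k=0$ is handled directly by the formula $c_{m,n,0}(i,j)=\binom{i+j}{i}$ recorded above, which at the center gives $\binom{\frac{m+n}2}{\frac m2}=T(m,n,0)$. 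Within a higher layer, the recurrence expresses $V$ at $(m+2,n+2,k+2)$ in terms of two same-layer values with one index reduced and one value from the layer below, the latter already equal to $T$ by the primary induction; the secondary induction on $m+n$ then propagates equality across the layer once it is seeded. The seeds are the edges $m=k$ and $n=k$, where formula (4) collapses to a single surviving term ($l=\tfrac k2$), yielding $V(k,n,k)=(-1)^{k/2}\binom{\frac n2}{\frac k2}=T(k,n,k)$ and symmetrically for $n=k$.

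The step I expect to demand the most care is the index bookkeeping in specializing the Lemma: one must confirm that the simultaneous shifts genuinely send centers to centers in all four matrices, for otherwise the resulting recurrence would not be closed on central values and the induction would not even start. The second delicate point is the boundary of the secondary induction: because the within-layer recurrence requires $m-2\ge k$ and $n-2\ge k$, it degenerates exactly on the edges $m=k$ and $n=k$, which therefore must be established independently rather than derived from the recurrence. The factorial verification that $T$ solves the recurrence, and the final identification of the summation form with the multinomial, are then routine.
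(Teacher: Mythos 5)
Your proposal is correct and follows essentially the same route as the paper: both specialize Lemma~1 to the central entries to obtain the four-term recurrence among central values and then induct down to the base case $k=0$ given by $c_{m,n,0}(i,j)=\binom{i+j}{i}$, with your verification that $T$ satisfies the recurrence being the same algebra the paper performs on the three multinomial coefficients. The only difference is organizational: you run a double induction (on $k$, then on $m+n$) and explicitly seed the edges $m=k$ and $n=k$ where the recurrence degenerates, a boundary point the paper's single induction on $m+n+k$ passes over silently.
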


\begin{proof}\ \   We prove the theorem by induction on $N=m+n+k$. For the base case, if $k=0$,  the result follows by (31).  Now suppose the theorem holds for $N\le m+n+k+4$.   Using Lemma 1,

\begin{eqnarray}
&&\hspace{-30pt}c_{m+2, n+2, k+2}\bigg(\frac{m+2}{2}, \frac{n+2}2\bigg)\notag\\
&=& c_{m, n+2, k+2}\bigg(\frac{m}2, \frac{n+2}2\bigg) + c_{m+2, n, k+2}\bigg(\frac{m+2}2, \frac{n}2\bigg) - c_{m, n, k}\bigg(\frac{m}2, \frac{n}2\bigg)\notag\\[1.0ex]
&=&(-1)^{\frac{k+2}2}\bigg(\left(\begin{array}{c}\frac{m+n-k}2\\[0.5ex] \frac{m-k-2}2, \frac{n-k}2, \frac{k+2}2 \end{array}\right)+\left(\begin{array}{c}\frac{m+n-k}2\\[0.5ex] \frac{m-k}2, \frac{n-k-2}2, \frac{k+2}2 \end{array}\right)+\left(\begin{array}{c}\frac{m+n-k}2\\[0.5ex] \frac{m-k}2, \frac{n-k}2, \frac{k}2 \end{array}\right)\bigg)\notag\\[1.0ex]
&=&(-1)^{\frac{k+2}2}\ \frac{m+n-k+2}{k+2}\left(\begin{array}{c}\frac{m+n-k}2\\[0.5ex] \frac{m-k}2, \frac{n-k}2, \frac{k}2 \end{array}\right)\notag\\[1.0ex]
&=&(-1)^{\frac{k+2}2}\left(\begin{array}{c}\frac{m+n-k+2}2\\[0.5ex] \frac{m-k}2, \frac{n-k}2, \frac{k+2}2 \end{array}\right).\notag
\end{eqnarray}
Thus the induction step holds and the theorem is proved.  $\qed$
\end{proof}

With similar proofs, the remaining three cases follow:

\begin{proposition} With $m$ and $n$ even and $k$ odd, the value below the central zero is given by
\begin{equation}c_{m, n, k}(\frac{m}2+1, \frac{n}2-1)=(-1)^\frac{k+1}2\ \frac{2(k+1)(m-k+1)}{m(m+2)} \left(\begin{array}{c} \frac{m+n-k+1}2\\ \frac{m-k+1}2, \frac{n-k-1}2, \frac{k+1}2 \end{array}\right).\notag\end{equation}
\end{proposition}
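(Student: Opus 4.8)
The plan is to mirror the induction behind Dixon's Identity (Theorem 3): induct on $N=m+n+k$ and drive the step with Lemma 1. Throughout $m,n$ are even and $k$ is odd, so Proposition 15 guarantees that $c_{m,n,k}(\frac{m}{2},\frac{n}{2})=0$ and the entry in question really is the one directly below the central zero. For the base case I take $k=1$ and substitute $i=\frac{m}{2}+1$, $j=\frac{n}{2}-1$ into the explicit formula $c_{m,n,1}(i,j)=\left(\begin{array}{c} i+j \\ i\end{array}\right)\frac{mj-ni}{i+j}$ from (31). Then $i+j=\frac{m+n}{2}$ and $mj-ni=-(m+n)$, so the entry equals $-2\left(\begin{array}{c}(m+n)/2 \\ (m/2)+1\end{array}\right)$; writing this as $-\frac{4}{m+2}\cdot\frac{((m+n)/2)!}{(m/2)!\,(n/2-1)!}$ matches the claimed formula at $k=1$, where its prefactor collapses to $\frac{4}{m+2}$.

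For the inductive step I apply Lemma 1 with $i=\frac{m}{2}+1$ and $j=\frac{n}{2}-1$. The key point is that every coefficient occurring is itself an ``entry below the central zero'': the term $c_{m+2,n+2,k+2}(\frac{m}{2}+2,\frac{n}{2})$ is the target for the triple $(m+2,n+2,k+2)$, while the other three are the corresponding entries for $(m,n+2,k+2)$, $(m+2,n,k+2)$, and $(m,n,k)$, each of strictly smaller $N$. The induction hypothesis then turns Lemma 1 into the scalar relation $B(m+2,n+2,k+2)=B(m,n+2,k+2)+B(m+2,n,k+2)-B(m,n,k)$, where $B$ abbreviates the proposed right-hand side.

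It remains to check this identity between products of a rational prefactor and a trinomial coefficient. I would factor out the common sign $(-1)^{(k+1)/2}$, set $P=\frac{m-k+1}{2}$, $Q=\frac{n-k-1}{2}$, $R=\frac{k+1}{2}$ with $S=P+Q+R=\frac{m+n-k+1}{2}$, and normalize all four terms against the common factorial block $\frac{S!}{(P-1)!\,Q!\,R!}$, using $(S+1)!=(S+1)\,S!$ and shifts such as $\frac{1}{(P-2)!}=\frac{P-1}{(P-1)!}$. Rewriting the prefactors via $m=2(P+R-1)$, $m+2=2(P+R)$, $m+4=2(P+R+1)$, $k+1=2R$, $k+3=2(R+1)$ and setting $u=P+R$, the identity collapses to $\frac{P-1}{(u-1)u}+\frac{Q}{u(u+1)}+\frac{R}{(u-1)u}=\frac{u+Q+1}{u(u+1)}$; the first and third fractions combine to $\frac{u-1}{(u-1)u}=\frac{1}{u}$ since $(P-1)+R=u-1$, and the rest is immediate.

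The induction skeleton and base case are routine; the care lies in bookkeeping and boundaries. First, one must verify the index identifications so that all three reduced coefficients are genuinely ``below central zero'' entries for their triples. Second, at the boundaries $m-k=1$ or $n-k=1$ one reduced triple leaves the domain, but there the formula $B$ evaluates to $0$ --- via the factor $m-k+1$ in the prefactor or a negative part in the trinomial --- matching the extend-by-zero convention, so Lemma 1 still holds and the induction is unaffected. I expect the only real obstacle to be organizational: performing the prefactor rewriting without slips and spotting the cancellation $(P-1)+R=u-1$ that makes the trinomial identity collapse. As a consistency check, specializing to $n=2k$ should recover equation (23).
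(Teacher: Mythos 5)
Your proof is correct and takes essentially the same route as the paper, which handles this proposition with ``with similar proofs, the remaining three cases follow,'' i.e., the same induction on $N=m+n+k$ driven by Lemma 1 that proves Theorem 3. Your base case $k=1$ via the formula for $c_{m,n,1}(i,j)$, the identification of all four terms in Lemma 1 as sub-central entries, and the reduction of the four-term relation to the rational identity in $P,Q,R,u$ all check out.
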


\begin{proposition} With $m$ odd and  $n$ and  $k$ even, the lower-right central value is given by
\begin{equation}c_{m, n, k}(\frac{m+1}2, \frac{n}2)=(-1)^\frac{k}2\ \frac{m-k+1}{m+1} \left(\begin{array}{c} \frac{m+n-k+1}2\\[0.5ex] \frac{m-k+1}2, \frac{n-k}2, \frac{k}2 \end{array}\right).\notag\end{equation}
\end{proposition}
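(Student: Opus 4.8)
The plan is to mirror the inductive proof of Dixon's Identity (Theorem 3): I would run an induction on $N=m+n+k$ using Lemma 1 as the only recurrence, and the author's remark that ``the remaining three cases follow'' with ``similar proofs'' is precisely the observation that the parity type in play here---$m$ odd, $n$ and $k$ even---is preserved when all of $m,n,k$ are increased by $2$. Consequently every term produced by Lemma 1 is again a lower-right central value of the same parity type, and the induction closes on itself exactly as in the even-even-even case.

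First I would pin down the index substitution so that the target value sits in the correct slot of Lemma 1. To reach $c_{m+2,n+2,k+2}(\frac{m+3}{2},\frac{n+2}{2})$ I set $i=\frac{m+1}{2}$ and $j=\frac{n}{2}$, which turns Lemma 1 into
\begin{equation}
c_{m+2,n+2,k+2}\left(\frac{m+3}{2},\frac{n+2}{2}\right)+c_{m,n,k}\left(\frac{m+1}{2},\frac{n}{2}\right)=c_{m,n+2,k+2}\left(\frac{m+1}{2},\frac{n+2}{2}\right)+c_{m+2,n,k+2}\left(\frac{m+3}{2},\frac{n}{2}\right).
\notag
\end{equation}
A direct check of the defining index $(i+1,i+j-k+1)$ shows that each of the three non-target terms is exactly the lower-right central value for the parameter triples $(m,n,k)$, $(m,n+2,k+2)$, and $(m+2,n,k+2)$, each of which is again $m$ odd with the other two even and each of strictly smaller $N$. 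Hence the inductive hypothesis applies to all three.

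For the base case $k=0$ the claimed formula collapses to $\binom{(m+n+1)/2}{(m+1)/2}$, which matches $c_{m,n,0}(\frac{m+1}{2},\frac{n}{2})=\binom{(m+n+1)/2}{(m+1)/2}$ obtained from (31). For the inductive step I would substitute the three hypothesis values into the displayed equation and solve for the target. Writing $a=\frac{m-k+1}{2}$, $b=\frac{n-k}{2}$, and $c=\frac{k}{2}$, so that $a+b+c=\frac{m+n-k+1}{2}$ and the scalar prefactor $\frac{m-k+1}{m+1}$ becomes $\frac{a}{a+c}$, every multinomial appearing is an integer multiple of the common factor $\frac{(a+b+c)!}{a!\,b!\,(c+1)!}$. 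After dividing through by this factor and by $a$---and noting the uniform sign flip $(-1)^{k/2}\mapsto(-1)^{(k+2)/2}=-(-1)^{k/2}$ from the three $(k+2)$-terms---the whole identity reduces to the rational statement
\begin{equation}
\frac{a+b+c+1}{a+c+1}=\frac{a-1}{a+c}+\frac{b}{a+c+1}+\frac{c+1}{a+c},
\notag
\end{equation}
which is immediate once the two $\frac{1}{a+c}$ terms are combined into $1$.

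The conceptual content is therefore light; the genuinely delicate points are bookkeeping. The main obstacle I anticipate is getting the index shift exactly right so that all three auxiliary terms land on lower-right central values rather than merely nearby entries, and then verifying the degenerate boundary configurations---small $m$, or $n=0$---where the reduction $(m+2,n+2,k+2)\mapsto(m,n,k)$ would drive an index outside the polygon. As in the earlier propositions, these are absorbed by the extension-by-zero convention together with the degenerate cases catalogued in Section 3, and the sign tracking is the easiest place to slip.
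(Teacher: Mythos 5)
Your proposal is correct and is exactly the argument the paper intends: the paper proves this proposition only by the remark ``with similar proofs, the remaining three cases follow,'' referring to the induction on $N=m+n+k$ via Lemma 1 used for Theorem 3, which is precisely what you carry out. Your index shift $i=\tfrac{m+1}{2}$, $j=\tfrac{n}{2}$ does land all three auxiliary terms on lower-right central values of the same parity type, and the rational identity you reduce to checks out.
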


\begin{proposition} With $m$ and $k$ odd and  $n$ even, the lower-right central value is given by
\begin{equation}c_{m, n, k}(\frac{m+1}2, \frac{n}2)=(-1)^\frac{k+1}2\ \frac{k+1}{m+1} \left(\begin{array}{c} \frac{m+n-k}2\\[0.5ex] \frac{m-k}2, \frac{n-k-1}2, \frac{k+1}2 \end{array}\right).\notag\end{equation}
\end{proposition}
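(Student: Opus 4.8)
The plan is to follow the template of Theorem 2 (Dixon's Identity): fix the parity class $m$ odd, $n$ even, $k$ odd, and induct on $N=m+n+k$, using formula (31) for the base case and Lemma 1 for the inductive step. Throughout, write $F(m,n,k)$ for the right-hand side of the asserted formula.

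For the base case $k=1$, I would substitute $i=\frac{m+1}2$ and $j=\frac n2$ into $c_{m,n,1}(i,j)=\binom{i+j}{i}\frac{mj-ni}{i+j}$ from (31). The scalar factor collapses to $\frac{-n}{m+n+1}$, and extracting the factors $\frac{m+n+1}2$, $\frac{m+1}2$, and $\frac n2$ from the binomial $\binom{(m+n+1)/2}{(m+1)/2}$ rewrites the product as $-\frac{2}{m+1}\,\frac{((m+n-1)/2)!}{((m-1)/2)!\,((n-2)/2)!}$, which is exactly $F(m,n,1)$.

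For the inductive step, I would apply Lemma 1 with $i=\frac{m+1}2$ and $j=\frac n2$. The key observation is that each of the four resulting arguments is again a lower-right central value in the same parity class: the terms are $c_{m+2,n+2,k+2}(\frac{m+3}2,\frac{n+2}2)$, $c_{m,n+2,k+2}(\frac{m+1}2,\frac{n+2}2)$, $c_{m+2,n,k+2}(\frac{m+3}2,\frac n2)$, and $c_{m,n,k}(\frac{m+1}2,\frac n2)$. The last three have strictly smaller parameter sum than the first, so by the induction hypothesis they all equal the corresponding values of $F$. Rearranging Lemma 1 then reduces the claim to the single identity $F(m+2,n+2,k+2)=F(m,n+2,k+2)+F(m+2,n,k+2)-F(m,n,k)$.

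The heart of the argument, and the one genuinely computational step, is verifying this last identity among products of a rational prefactor and a multinomial coefficient. Setting $a=\frac{m-k}2$, $b=\frac{n-k-1}2$, $c=\frac{k+1}2$, which are nonnegative integers with $a+b+c=\frac{m+n-k}2$ and $a+c=\frac{m+1}2$, one checks first that the sign $(-1)^{(k+1)/2}$ occurs uniformly, since $(-1)^{(k+3)/2}=-(-1)^{(k+1)/2}$, so the signs cancel. Dividing through by $\binom{a+b+c}{a,b,c}$ and simplifying each multinomial by its rising and falling factors then turns the identity into
\[
\frac{a+b+c+1}{a+c+1}=\frac{a}{a+c}+\frac{b}{a+c+1}+\frac{c}{a+c},
\]
which holds at once because $\frac{a}{a+c}+\frac{c}{a+c}=1$. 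The only subtlety to watch is behavior near the boundary of the cone, for instance $m=k$ or $n=k+1$, where the reduced matrices degenerate; but these appear as limiting instances of the same formula and the recurrence continues to apply wherever all its terms are defined, so no separate treatment is needed. I expect this bookkeeping of the index alignment in Lemma 1, rather than the final fraction identity, to be the main place where care is required.
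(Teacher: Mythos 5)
Your proposal is correct and is essentially the paper's own argument: the paper proves this proposition by the same induction on $m+n+k$ used for Theorem 3 (Dixon's Identity), with the $k=1$ stratum settled by formula (31) and the inductive step supplied by Lemma 1, exactly as you describe. Your index bookkeeping, the sign flip $(-1)^{(k+3)/2}=-(-1)^{(k+1)/2}$, and the reduction to $\frac{a+b+c+1}{a+c+1}=\frac{a}{a+c}+\frac{b}{a+c+1}+\frac{c}{a+c}$ all check out.
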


\section{Case 1: $m$, $n$, and $k$ even}
\label{sec:6}

We now turn our attention to the first of four cases of  ``non-trivial" zeros.  Non-trivial zeros are also called ``polynomial" or ``structural" in the literature.

 For fixed $m$, $n$, and $k,$ the use of exponents $i$ and $j$ allow for indexing of $M(m, n, k)$ more or less according to usual matrix notation.  When $m$ and $n$ are both even,  the polygon rotates or reflects about the central value under the $D_{12}$ symmetries; in the other two cases, the central area may change shape.  Positioning the central value as the origin,  we develop two infinite lattices of rational functions of $m$, $n,$ and $k$, one for each  case in the table when $m$ and $n$ are even.  

To compute a given $M(m, n, k),$ one extracts the appropriate polygon from the lattice, evaluates the corresponding rational function at $m$, $n$, and $k$, and rescales by the central or near-central value from Section 5.  An algorithm to construct this lattice follows:
\begin{enumerate}
\item obtain a recursive formula to compute down two columns from the center,
\item use Pascal's recurrence to compute down-and-to-the-right from the center, and
\item apply the $D_{12}$ symmetries to extend to the entire lattice.
\end{enumerate}

First we consider the case with $k$ even.  The central value $X$ is given by Theorem 3. To begin, we have

\begin{proposition} Suppose $m, n,$ and $k$ are even, and $M(m, n, k)$ has central square
\begin{equation}\left[\begin{array}{ccc} Z X  & \phantom{-}* & \phantom{-}* \\  Y X & \phantom{-}\fbox{X}  & \phantom{-}B_0 X \\  * & \phantom{-}A_1 X & \phantom{-}B_1 X \end{array}\right]\notag\end{equation}
for nonzero $X$.  Then
\begin{equation}B_0=\frac{\lambda_{m'}-\lambda_m+\lambda_n}{2\lambda_n}, \quad A_1=\frac{\lambda_{m'}-\lambda_m-\lambda_n}{2\lambda_m},\quad  B_1= \frac{\lambda_{m'}+\lambda_m-\lambda_n}{2\lambda_m},\notag\end{equation}
where $m'=m+n-2k$ and $\lambda_s=s(s+2).$
\end{proposition}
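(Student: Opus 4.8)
The plan is to read the relevant entries of the central square as values of $c_{m, n, k}$, impose the two recurrences of Propositions 7 and 8 at a few lattice points near the center, and then close the resulting linear system using the Weyl symmetry of Proposition 3. Writing the boxed center as $X = c_{m, n, k}(\frac m2, \frac n2)$, the entries of interest are $B_0 X = c_{m, n, k}(\frac m2, \frac n2 + 1)$, $B_1 X = c_{m, n, k}(\frac m2 + 1, \frac n2)$, $A_1 X = c_{m, n, k}(\frac m2 + 1, \frac n2 - 1)$, together with the left neighbor $Y X = c_{m, n, k}(\frac m2, \frac n2 - 1)$. The reason $\lambda_s = s(s+2)$ surfaces is that at each of these central indices the coefficients $a_1, a_2, a_3$ of the reverse recurrence factor as $\frac14\lambda_{m'}, \frac14\lambda_m, \frac14\lambda_n$, since $\frac s2(\frac s2 + 1) = \frac14 s(s+2)$ for $s \in \{m, n, m'\}$.

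First I would apply Proposition 8 (the reverse recurrence) at the center $(\frac m2, \frac n2)$; after dividing by $X \ne 0$ this gives $\lambda_{m'} = \lambda_m B_1 + \lambda_n B_0$. Then Proposition 7 (Pascal's recurrence) at $(\frac m2 + 1, \frac n2)$ reads $B_1 X = A_1 X + X$, that is $B_1 = A_1 + 1$. A third relation comes from the reverse recurrence at the left neighbor $(\frac m2, \frac n2 - 1)$, which yields $\lambda_{m'} Y = \lambda_m A_1 + \lambda_n$.

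These three relations still carry the extra unknown $Y$, so the recurrences alone leave the system underdetermined by one equation; closing the gap is where the real work lies. To do so I would invoke Proposition 3 (the R23 Weyl symmetry) with $k$ even, so that the sign $(-1)^k$ is trivial and the rescaling is positive, applied at $(i, j) = (\frac m2, \frac n2 - 1)$. Since then $m - i = \frac m2$ and $n - j = \frac n2 + 1$, this pairs $c_{m, n, k}(\frac m2, \frac n2 + 1) = B_0 X$ with $c_{m, n, k}(\frac m2, \frac n2 - 1) = Y X$, and the multinomial coefficients appearing in Proposition 3 collapse to the ratio $B_0 / Y = \lambda_{m'}/\lambda_n$, reminiscent of the diagonal computations of Section 4. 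Substituting $\lambda_{m'} Y = \lambda_n B_0$ into the third relation eliminates $Y$ and produces $\lambda_n B_0 = \lambda_m A_1 + \lambda_n$.

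Finally I would solve the now-determined system in $B_0, A_1, B_1$: using $A_1 = B_1 - 1$ to remove $A_1$ and equating the two resulting expressions for $\lambda_n B_0$ gives $2\lambda_m B_1 = \lambda_{m'} + \lambda_m - \lambda_n$, whence $B_1 = (\lambda_{m'} + \lambda_m - \lambda_n)/2\lambda_m$; back-substitution then returns $A_1$ and $B_0$ in the stated forms. I expect the only genuine obstacle to be the symmetry step, namely verifying that the multinomial ratio in Proposition 3 is precisely $\lambda_{m'}/\lambda_n$ rather than a sign- or shift-twisted variant, since once that identity is secured everything downstream is elementary linear algebra. A quick sanity check against $M(2, 4, 2)$, where $X = -2$ forces $B_0 = \frac12$, $A_1 = -\frac32$, $B_1 = -\frac12$, confirms the bookkeeping.
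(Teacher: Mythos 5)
Your proof is correct and follows essentially the same route as the paper: both set up a small linear system at the central indices using Pascal's recurrence, the reverse recurrence (whose coefficients become $\tfrac14\lambda_{m'},\tfrac14\lambda_m,\tfrac14\lambda_n$ there), and the R23 Weyl symmetry, then solve for $A_1$, $B_1$, $B_0$. The only cosmetic difference is that you apply the Weyl symmetry to the horizontal pair $Y\leftrightarrow B_0$ (ratio $\lambda_{m'}/\lambda_n$, which does check out) while the paper applies it to the vertical pair $Z\leftrightarrow B_1$ and adds Pascal at the center.
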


\begin{proof}\ \   Since $k$ is even, $X$ is nonzero.  The following four equations follow from Propositions 7 and 8 and the Weyl group symmetry (6):
\begin{enumerate}
\item $Z +Y=1,$
\item $1+A_1=B_1,$
\item $\lambda_{m'}Y=\lambda_m A_1+\lambda_n,$ and
\item $\lambda_{m'}Z={\lambda_m}B_1.$
\end{enumerate}
These equations reduce to a linear system in $A_1$ and $B_1$ with the above solutions. The reverse recurrence yields the formula for $B_0$.\ \ $\qed$
\end{proof}

Consider the following fourth-quadrant submatrix with $X$ in the central position:
\begin{equation}
\left[\begin{array}{cccc} 
\fbox{$X$} &  & &  \\
A_1 X & \phantom{-}B_1 X&  &   \\
A_2 X& \phantom{-}B_2 X & \phantom{-}C_2 X&   \\
A_3 X& \phantom{-}B_3 X& \phantom{-}C_3 X& \phantom{-}D_3 X\\
\end{array}\right].
\end{equation}

Alternating between the two main recurrences as in Proposition 19 immediately yields
\begin{theorem} Let $X$ be the central value determined by Theorem 3, and define $\lambda_s=s(s+2).$ The first two columns of matrix (33) are computed recursively by
\begin{equation}A_1 = \frac{\lambda_{m'}-\lambda_m-\lambda_n}{2\lambda_m}, \quad B_1 = \frac{\lambda_{m'}+\lambda_m-\lambda_n}{2\lambda_m},\end{equation}
\begin{equation}A_{s+1}=\frac{(\lambda_{m'}-\lambda_m+\lambda_{2s})A_s + (\lambda_{2s-2}-\lambda_{n}) B_s}{\lambda_m-\lambda_{2s}},\end{equation}
\begin{equation}B_{s+1}=\frac{\lambda_{m'}A_s + (\lambda_{2s-2}-\lambda_{n}) B_s}{\lambda_m-\lambda_{2s}}.\end{equation}
\end{theorem}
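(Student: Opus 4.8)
The plan is to mirror the argument of Proposition 19, now carried out at a general depth $s$ below the center rather than only at the first sub-diagonal step. First I would fix notation for the lattice points underlying the coefficients. Writing $P=m/2$ and $Q=n/2$ for the (integer) central indices, and recalling that position $(i+1,\,i+j-k+1)$ of $M(m,n,k)$ carries $c_{m,n,k}(i,j)$, the first column of (33) at depth $s$ is the entry $c_{m,n,k}(P+s,\,Q-s)=A_sX$ (with $A_0=1$), while the second column at depth $s$ is $c_{m,n,k}(P+s,\,Q+1-s)=B_sX$. With these identifications the task reduces to producing two scalar relations that link the depth-$s$ pair $(A_s,B_s)$ to the depth-$(s+1)$ pair $(A_{s+1},B_{s+1})$.

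The first relation comes from Pascal's recurrence (Proposition 8). Applying $c_{m,n,k}(i,j)=c_{m,n,k}(i,j-1)+c_{m,n,k}(i-1,j)$ at the point $(P+s+1,\,Q-s)$ — whose three entries are exactly the depth-$(s+1)$ second-column entry, the depth-$(s+1)$ first-column entry, and the depth-$s$ first-column entry — and dividing through by $X$ yields the clean identity
\[
B_{s+1}=A_{s+1}+A_s,
\]
which specializes at $s=0$ to the equation $1+A_1=B_1$ used in Proposition 19. The second relation comes from the reverse recurrence (Proposition 9), applied at the first-column point $(i,j)=(P+s,\,Q-s)$; there $c_{m,n,k}(i+1,j)$ is the depth-$(s+1)$ second-column entry $B_{s+1}X$ and $c_{m,n,k}(i,j+1)$ is the depth-$s$ second-column entry $B_sX$.

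The heart of the computation — and the one genuinely fiddly step — is to check that the three weights $a_1,a_2,a_3$ collapse into the advertised $\lambda$-notation. Using $\lambda_s=s(s+2)$ and $m'=m+n-2k$, I would verify the factorizations
\[
a_1=\frac{1}{4}\lambda_{m'},\qquad a_2=\frac{1}{4}(\lambda_m-\lambda_{2s}),\qquad a_3=\frac{1}{4}(\lambda_n-\lambda_{2s-2}),
\]
each an elementary but not-quite-obvious identity such as $(m+2s+2)(m-2s)=\lambda_m-\lambda_{2s}$. Dividing the recurrence by $X/4$ then gives
\[
\lambda_{m'}A_s=(\lambda_m-\lambda_{2s})\,B_{s+1}+(\lambda_n-\lambda_{2s-2})\,B_s.
\]

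Solving this for $B_{s+1}$ reproduces the stated formula for $B_{s+1}$ verbatim, and substituting into $A_{s+1}=B_{s+1}-A_s$ from the Pascal relation reproduces the stated formula for $A_{s+1}$; the base values $A_1,B_1$ are quoted directly from Proposition 19. The only subtlety worth flagging is validity of the step: the reverse recurrence requires all four referenced entries to lie in the polygon, so the recursion runs only while $2s<m$ (equivalently $\lambda_m-\lambda_{2s}\neq 0$), which is exactly the range in which one remains strictly interior. I do not expect any obstacle beyond this coefficient bookkeeping — once the $\lambda$-factorizations are in hand, the two recurrences combine mechanically to give both formulas.
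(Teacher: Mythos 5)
Your proposal is correct and is exactly the argument the paper intends: the paper's proof consists of the single sentence ``Alternating between the two main recurrences as in Proposition 19 immediately yields,'' and your write-up supplies precisely that alternation, with the coordinate identifications, the Pascal relation $B_{s+1}=A_{s+1}+A_s$, and the $\lambda$-factorizations of $a_1,a_2,a_3$ all checking out. The validity range $2s<m$ you flag also matches the paper's subsequent remark that the denominator is non-vanishing for $s<\frac{m}{2}$.
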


In the triangle from the first column  to the diagonal, unreduced denominators are equal along rows and increase by a factor of $\lambda_m-\lambda_{2s}$ as we pass from the $s$-th to the $(s+1)$-st row.  That is, the denominator for index $s+1$ equals
\begin{equation}d_{s+1}=2 \prod_{l=0}^s (\lambda_m-\lambda_{2l})=2^{2s+3}\frac{(\frac{m+2s+2}{2})!}{(\frac{m-2s-2}{2})!}.\end{equation}

This implies immediately
\begin{corollary} For $s\ge 1,$ let $N(A_s)$ and $N(B_s)$ be the numerators in the unreduced expressions of $A_s$ and $B_s$, respectively.  Then $N(A_s)$ and $N(B_s)$ are computed recursively by
\begin{equation}N(A_1) = \lambda_{m'}-\lambda_m-\lambda_n, \quad N(B_1) = \lambda_{m'}+\lambda_m-\lambda_n,\end{equation}
\begin{equation}\left[\begin{array}{c} N(A_{s+1}) \\ N(B_{s+1})\end{array}\right]=\left[\begin{array}{cc}\lambda_{m'}-\lambda_m+\lambda_{2s} &\phantom{-} \lambda_{2s-2}-\lambda_n\\ \lambda_{m'} & \phantom{-}\lambda_{2s-2}-\lambda_n \end{array}\right]\left[ \begin{array}{c}N(A_s) \\ N(B_s) \end{array}\right].\end{equation}
\end{corollary}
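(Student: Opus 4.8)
The statement is an immediate consequence of Theorem 4, obtained by clearing the common denominator, and I would prove it by a short induction on $s$. Writing $d_s=2\prod_{l=0}^{s-1}(\lambda_m-\lambda_{2l})$ for the denominator in (37) with the index shifted down by one (so that the displayed $d_{s+1}$ agrees with this and $d_{s+1}=d_s(\lambda_m-\lambda_{2s})$), the plan is to show by induction that $A_s$ and $B_s$ are carried over this \emph{single} common denominator, i.e. $A_s=N(A_s)/d_s$ and $B_s=N(B_s)/d_s$, where the numerators $N(A_s),N(B_s)$ obey (39)--(40). This is exactly the bookkeeping already announced in the paragraph preceding the corollary.

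For the base case, formula (34) presents $A_1$ and $B_1$ over the common denominator $2\lambda_m$; since $\lambda_0=0$ we have $d_1=2(\lambda_m-\lambda_0)=2\lambda_m$, and reading off the numerators gives $N(A_1)=\lambda_{m'}-\lambda_m-\lambda_n$ and $N(B_1)=\lambda_{m'}+\lambda_m-\lambda_n$, as in (39). For the inductive step, assume $A_s=N(A_s)/d_s$ and $B_s=N(B_s)/d_s$. Substituting into (35)--(36) and using $d_{s+1}=d_s(\lambda_m-\lambda_{2s})$ yields
\[
A_{s+1}=\frac{(\lambda_{m'}-\lambda_m+\lambda_{2s})N(A_s)+(\lambda_{2s-2}-\lambda_n)N(B_s)}{d_{s+1}},\qquad
B_{s+1}=\frac{\lambda_{m'}N(A_s)+(\lambda_{2s-2}-\lambda_n)N(B_s)}{d_{s+1}}.
\]
Hence $A_{s+1}$ and $B_{s+1}$ again share the denominator $d_{s+1}$, and their numerators are precisely the two components produced by the $2\times2$ matrix of (40) applied to the column vector with entries $N(A_s)$ and $N(B_s)$. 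This closes the induction and proves the corollary.

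There is no genuine obstacle here; the entire content is the observation that at each stage $A_s$ and $B_s$ are expressed over one common denominator and that the recursion multiplies this denominator by the single new factor $\lambda_m-\lambda_{2s}$. The only points that merit a moment of care are the index accounting (checking $d_s(\lambda_m-\lambda_{2s})=d_{s+1}$ is consistent with (37)) and the legitimacy of the division: throughout the triangle from the first column to the diagonal one has $0\le 2s<m$, so $\lambda_m-\lambda_{2s}=(m-2s)(m+2s+2)>0$ never vanishes, and no spurious cancellation is forced when passing to the unreduced numerators.
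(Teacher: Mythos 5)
Your proof is correct and matches the paper's intent exactly: the paper's entire justification for the corollary is the sentence preceding it about the common unreduced denominator $d_{s+1}=2\prod_{l=0}^{s}(\lambda_m-\lambda_{2l})$ followed by ``This implies immediately,'' and your induction (base case $d_1=2\lambda_m$ from (34), inductive step clearing $d_{s+1}=d_s(\lambda_m-\lambda_{2s})$ in (35)--(36)) is just that argument written out in full, together with the correct observation that $\lambda_m-\lambda_{2s}=(m-2s)(m+2s+2)$ does not vanish in the relevant range.
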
 

To proceed towards the diagonal, for instance,  we have for $s\ge 1,$
\begin{equation}C_{s+1}= B_{s}+B_{s+1}, \qquad N(C_{s+1})=(\lambda_m-\lambda_{2s})N(B_{s})+ N(B_{s+1}).\end{equation}

The denominator is non-vanishing for $s<\frac{m}2$, and vanishing in a coordinate relative to the the central value reduces to solving the corresponding Diophantine equation, say  $N(A_s)=0,$
in $m,$ $n$, and $k.$  

In \cite{RRV},  three families of zeros corresponding to orders 1, 2, and 3, with 6, 12, and 17 subfamilies, respectively,  are classified.  Here order is a measure of ``distance" using 3-term hypergeometric contiguity relations. It may be computed from the Regge symbol directly (section 4 of \cite{RRV}).    Order 1 subfamilies are indexed $I$ through $VI$, and, in particular, these zeros admit a full parameterization, as do subfamilies 2.7 and 2.8.  Each subfamily of order 2 zeros contains infinitely many zeros.  Cardinality in order 3 is an open question, with infinitely many zeros known in types 3.1 and 3.2.  We further note that the conjecture by Brudno in footnote 7 of \cite{HH} is case $I$ of \cite{RRV}. 

For $m, n, k$ even, these subfamilies correspond to positions around the central value as follows:

\begin{equation}
\left[
\begin{array}{ccccccc}
  3.1    &   \phantom{-}3.2    &  \phantom{-}3.2    &   \phantom{-}3.1   &    &      &      \\
  3.2    &  \phantom{-} 2.1    &  \phantom{-} 2.2   & \phantom{-} 2.1    & \phantom{-} 3.2 &      &      \\
  3.2    &  \phantom{-}2.2   & \phantom{-} I    &  \phantom{-} I   &\phantom{-} 2.2 &  \phantom{-}3.2    &      \\     
  3.1  &  \phantom{-}2.1     &\phantom{-}  I   & \phantom{-} \fbox{$\bullet$}   & \phantom{-} I &  \phantom{-}2.1    &  \phantom{-}3.1   \\
      &   \phantom{-}3.2  & \phantom{-}2.2 & \phantom{-} I    & \phantom{-} I    & \phantom{-} 2.2  &  \phantom{-}3.2  \\
      &       &  \phantom{-} 3.2   & \phantom{-} 2.1    &   \phantom{-}2.2 &  \phantom{-}2.1    &  \phantom{-}3.2  \\
      &       &      &  \phantom{-}  3.1  &  \phantom{-}3.2 &   \phantom{-}3.2   &  \phantom{-} 3.1      
 \end{array}\right] \notag
 \end{equation}
 with Diophantine equations in the subcentral triangle given by
 \begin{itemize}
 \item $I$:  $N(A_1)=\lambda_{m'}-\lambda_m-\lambda_n=0,$
 \item 2.1: $N(A_2) = (\lambda_{m'}-\lambda_m+8)(\lambda_{m'}-\lambda_m-\lambda_n)-\lambda_n(\lambda_{m'}+\lambda_{m}-\lambda_n)=0,$
 \item 2.2: $N(B_2)= \lambda_{m'}(\lambda_{m'}-\lambda_m-\lambda_n)-\lambda_n(\lambda_{m'}+\lambda_{m}-\lambda_n)=0,$
 \item 3.1: $N(A_3)= (\lambda_{m'}-\lambda_m+24)N(A_2)-(\lambda_n-8)N(B_2)=0,$
 \item 3.2: $N(B_3)= \lambda_{m'}N(A_2)-(\lambda_n-8)N(B_2)=0.$
 \end{itemize}
Under the $D_{12}$ symmetries, the numerators change by permuting $m'$, $m$, and $n$ and rescaling as in Propositions 3--6.

\begin{figure}
\includegraphics[scale=.8]{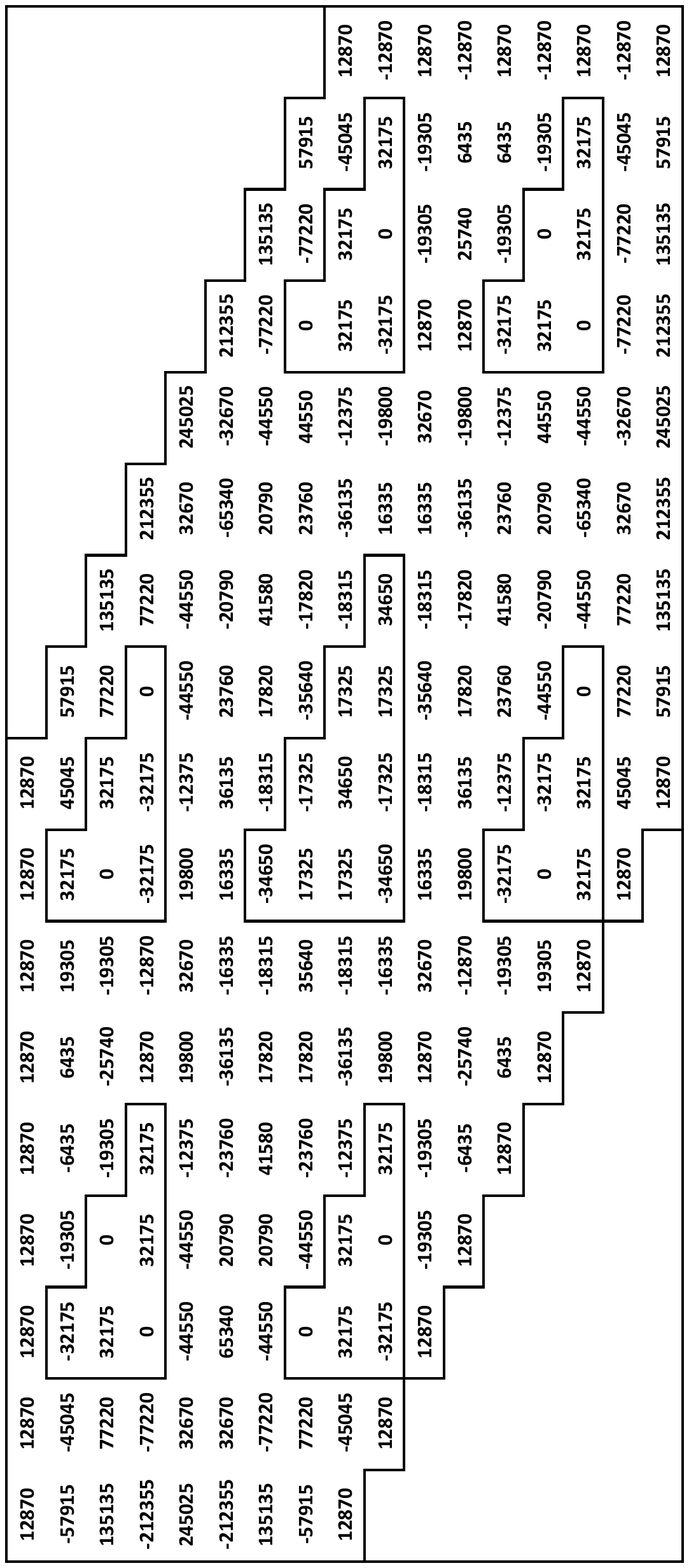}
\caption{M(16, 16, 8): $m=n=2k,$ $k$ even with proper zeros}
\label{M(16, 16, 8)}
\end{figure}

Now suppose $m=n=2k$ with $k$ even. The central value reduces to the original Dixon Identity

\begin{corollary}[Dixon \cite{Di}] When $m=n=2k$ and $k$ even, the central value 
\begin{equation}c_{2k, 2k, k}(k, k)\ = \sum_{l=0}^{k} (-1)^l \left(\begin{array}{c} k \\ l\end{array}\right)^3 =  \ (-1)^\frac{k}2 \left(\begin{array}{c} \frac{3k}2 \\[0.5ex] \frac{k}2,\ \frac{k}2,\ \frac{k}2\end{array}\right).\end{equation}
\end{corollary}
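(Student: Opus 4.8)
The plan is to obtain this corollary as an immediate specialization of Theorem 3, which already establishes the general Dixon Identity for $m$, $n$, and $k$ all even. Since the choice $m=n=2k$ satisfies those parity hypotheses precisely when $k$ is even, no fresh induction or recurrence argument is needed; the entire content is a substitution followed by the elementary symmetry of binomial coefficients.

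First I would set $m=n=2k$ in the summation side of Theorem 3. The upper index $\frac{m+n-2k}2$ becomes $k$, and both $\frac{m}2$ and $\frac{n}2$ become $k$, so the three binomial factors reduce to $\left(\begin{array}{c} k \\ k-l\end{array}\right)$, $\left(\begin{array}{c} k \\ k-l\end{array}\right)$, and $\left(\begin{array}{c} k \\ l\end{array}\right)$. Applying the symmetry $\left(\begin{array}{c} k \\ k-l\end{array}\right)=\left(\begin{array}{c} k \\ l\end{array}\right)$ to the first two factors collapses the summand to $(-1)^l\left(\begin{array}{c} k \\ l\end{array}\right)^3$, producing the middle expression $\sum_{l=0}^k(-1)^l\left(\begin{array}{c} k \\ l\end{array}\right)^3$.

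Next I would perform the same substitution on the closed-form side of Theorem 3. The multinomial upper index $\frac{m+n-k}2$ becomes $\frac{3k}2$, the lower indices $\frac{m-k}2$ and $\frac{n-k}2$ each become $\frac{k}2$, the remaining entry stays $\frac{k}2$, and the sign $(-1)^{k/2}$ is unaffected. This yields exactly $(-1)^{k/2}\left(\begin{array}{c} \frac{3k}2 \\ \frac{k}2,\ \frac{k}2,\ \frac{k}2\end{array}\right)$, the stated right-hand side.

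There is no genuine obstacle here: the only point meriting a line of care is that all halved indices remain nonnegative integers, so that each binomial and the multinomial is well defined. This is immediate, since $k$ even forces $\frac{k}2$ and $\frac{3k}2$ to be integers, and $0\le l\le k$ keeps each lower index in range. Hence the corollary is nothing more than Theorem 3 read at the diagonal point $m=n=2k$.
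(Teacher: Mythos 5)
Your proposal is correct and matches the paper exactly: the paper offers no separate argument for this corollary, presenting it as the immediate specialization $m=n=2k$ of Theorem 3, which is precisely the substitution and binomial-symmetry reduction you carry out.
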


Although we no longer have the diagonals of  zeros from the odd $k$ case in section 4,  there is a central equilateral triangle, with sides of length 4, given by
\begin{equation}
\left[
\begin{array}{cccc}
  -X  & \mathbf{}   &  &   \\[1ex]
   \phantom{-}X/2  & {- X/2} &  &  \\[1ex]
 \phantom{-} {X/2}   &  \phantom{-}\fbox{$X$} &  \phantom{-}{X/2} & \mathbf{}\\[1ex]
   -X    & {-X/2}   & \phantom{-}X/2 & \phantom{-}X 
\end{array}\right].\notag
\end{equation}

We note two conjectures, which hold experimentally for $k<2000:$
\begin{enumerate}
\item when $k$ is odd, zeros only occur on one of the three diagonals in the polygon of $M(2k, 2k, k)$, and
\item when $k$ is even, no zeros occur in the polygon of $M(2k, 2k, k)$ unless $k=8,$ in which case there are six doublets forming a hexagon (Figure 2).  
\end{enumerate}

\section{Case 2:  $m$ and $n$ even, $k$ odd}
\label{sec:7}

Assume $m$ and $n$ even, and $k$ odd. This section proceeds in a manner similar, but somewhat simpler than the previous section.  

Consider the following fourth-quadrant submatrix, with central value $0$ and near central value $X$ given by Proposition 16:
\begin{equation}
\left[
\begin{array}{ccccc}
\fbox{$0$} &  & &  \\
X & \phantom{-}X &  &   \\
A_2 X& \phantom{-}B_2 X & \phantom{-}C_2 X&   \\
A_3 X& \phantom{-}B_3 X& \phantom{-}C_3 X& \phantom{-}D_3 X\\
A_4 X & \phantom{-}B_4  X & \phantom{-}C_4 X & \phantom{-}D_4 X & \phantom{-}E_4 X
\end{array}\right].
\end{equation}

Alternating between the two main recurrences as in Proposition 19 immediately yields
\begin{theorem} Let $X$ be the sub-central value determined by Proposition 16, and define $\lambda_s=s(s+2).$ With $s\ge 1,$ the first two columns of (42) are computed recursively by
\begin{equation}A_1 = 1, \quad B_1 = 1,\end{equation}
\begin{equation}A_{s+1}=\frac{(\lambda_{m'}-\lambda_m+\lambda_{2s})A_s - (\lambda_n-\lambda_{2s-2}) B_s}{\lambda_m-\lambda_{2s}},\end{equation}
\begin{equation}B_{s+1}=\frac{\lambda_{m'}A_s - (\lambda_n-\lambda_{2s-2}) B_s}{\lambda_m-\lambda_{2s}}.\end{equation}
\end{theorem}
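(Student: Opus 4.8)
The plan is to mirror the derivation of Proposition 19 and Theorem 4, alternating Pascal's recurrence (Proposition 7) with the reverse recurrence (Proposition 8) around the central zero. In fact the recurrence itself is structural and independent of the parity of $k$, so only the base case will invoke that $k$ is odd. First I would fix the dictionary between the abstract coefficients and the matrix entries. Placing the central zero at $(i,j)=(\frac m2,\frac n2)$, the first column of the fourth-quadrant submatrix (42) runs straight down, so that $A_s X = c_{m,n,k}(\frac m2+s,\frac n2-s)$, while the second column sits one step to its right, giving $B_s X = c_{m,n,k}(\frac m2+s,\frac n2-s+1)$. The base case is then immediate: $A_1=1$ is the definition of $X$ in Proposition 16, while $B_1=1$ is the Pascal relation $B_1=A_1+A_0$ in which $A_0X$ is the central value, and this vanishes by Proposition 15 because $k$ is odd.

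For the inductive step I would extract two relations linking row $s$ to row $s+1$. Pascal's recurrence at the $B_{s+1}$ entry gives $B_{s+1}=A_{s+1}+A_s$, and the reverse recurrence at the $A_s$ entry gives $a_1 A_s = a_2 B_{s+1} + a_3 B_s$ after dividing through by $X$. Solving the latter for $B_{s+1}$ produces the stated formula for $B_{s+1}$, and then substituting into $A_{s+1}=B_{s+1}-A_s$ yields the formula for $A_{s+1}$, the extra summand $\lambda_{2s}$ in its numerator arising precisely from absorbing $-(\lambda_m-\lambda_{2s})A_s$.

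The only genuine computation is evaluating the weights $a_1,a_2,a_3$ of Proposition 8 at $(i,j)=(\frac m2+s,\frac n2-s)$. Multiplying through by $4$, these become $4a_1=(m'+2)m'=\lambda_{m'}$, $4a_2=4(\frac m2+s+1)(\frac m2-s)=m^2+2m-4s^2-4s=\lambda_m-\lambda_{2s}$, and $4a_3=4(\frac n2-s+1)(\frac n2+s)=\lambda_n-\lambda_{2s-2}$, where $m'=m+n-2k$ so that $\frac{m'}2=\frac m2+\frac n2-k$. The one point to watch is the index shift in the third identity: the factor $j+1=\frac n2-s+1$ forces $\lambda_{2s-2}$ rather than $\lambda_{2s}$, which is also why the two columns update asymmetrically. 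After the harmless sign rewrite $-(\lambda_n-\lambda_{2s-2})=\lambda_{2s-2}-\lambda_n$, the recurrence matrix coincides with the one in Theorem 4, so the only new content is carried by the base values $A_1=B_1=1$. These recurrences hold wherever the entries lie in the polygon, equivalently for $s<\frac m2$ so that the denominator $\lambda_m-\lambda_{2s}$ is nonzero; everything else is forced, and the recurrences follow at once.
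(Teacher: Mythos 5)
Your proof is correct and takes essentially the same approach as the paper, whose entire argument is the single line ``alternating between the two main recurrences as in Proposition 19 immediately yields'' the theorem; you have simply supplied the details that line compresses --- the coordinate dictionary $A_sX=c_{m,n,k}(\tfrac m2+s,\tfrac n2-s)$, $B_sX=c_{m,n,k}(\tfrac m2+s,\tfrac n2-s+1)$, the Pascal relation $B_{s+1}=A_{s+1}+A_s$, the reverse recurrence $\lambda_{m'}A_s=(\lambda_m-\lambda_{2s})B_{s+1}+(\lambda_n-\lambda_{2s-2})B_s$ with the correctly evaluated weights, and the base case $A_1=B_1=1$ from the vanishing central value (Proposition 15, $k$ odd).
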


In the triangle from the first column  to the diagonal, unreduced denominators are equal along rows and increase by a factor of $\lambda_m-\lambda_{2s}$ as we pass from the $s$-th to the $(s+1)$-st row.  That is, with $s\ge 1,$ the denominator for index $s+1$ equals
\begin{equation}d_{s+1}= \prod_{l=1}^s (\lambda_m-\lambda_{2l})=\frac{2^{2s+2}}{\lambda_m} \frac{(\frac{m+2s+2}{2})!}{(\frac{m-2s-2}{2})!}.\end{equation}

This implies immediately
\begin{corollary} For $s\ge 1,$ let $N(A_s)$ and $N(B_s)$ be the numerators in the unreduced expressions of $A_s$ and $B_s$, respectively.  Then$N(A_s)$ and $N(B_s)$ are computed recursively by
\begin{equation}N(A_1) = 1, \quad N(B_1) = 1,\end{equation}
\begin{equation}\left[\begin{array}{c} N(A_{s+1}) \\ N(B_{s+1})\end{array}\right]=\left[\begin{array}{cc}\lambda_{m'}-\lambda_m+\lambda_{2s} &\phantom{-} \lambda_{2s-2}-\lambda_n\\ \lambda_{m'} & \phantom{-}\lambda_{2s-2}-\lambda_n \end{array}\right] \left[\begin{array}{c}N(A_s) \\ N(B_s) \end{array}\right].\end{equation}
\end{corollary}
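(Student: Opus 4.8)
The plan is to derive the integer matrix recurrence directly from the rational recurrence of Theorem 5 by clearing a common, telescoping denominator. The starting point is the denominator sequence $d_s$ recorded in the paragraph preceding the corollary: from $d_{s+1}=\prod_{l=1}^s(\lambda_m-\lambda_{2l})$ one reads off both the empty-product normalization $d_1=1$ and the telescoping identity $d_{s+1}=d_s(\lambda_m-\lambda_{2s})$. I would then \emph{define} the unreduced numerators by $N(A_s):=d_sA_s$ and $N(B_s):=d_sB_s$. The base case is then immediate: since $d_1=1$ and $A_1=B_1=1$ by Theorem 5, we obtain $N(A_1)=N(B_1)=1$, as claimed.

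For the inductive step, I would multiply each of the two recurrences of Theorem 5 through by $d_{s+1}=d_s(\lambda_m-\lambda_{2s})$. The factor $\lambda_m-\lambda_{2s}$ cancels the denominator displayed in Theorem 5, while the surviving factor $d_s$ turns $A_s$ and $B_s$ into $N(A_s)$ and $N(B_s)$. Rewriting $-(\lambda_n-\lambda_{2s-2})$ as $\lambda_{2s-2}-\lambda_n$, the recurrence for $A_{s+1}$ becomes
\[
N(A_{s+1})=(\lambda_{m'}-\lambda_m+\lambda_{2s})N(A_s)+(\lambda_{2s-2}-\lambda_n)N(B_s),
\]
and the recurrence for $B_{s+1}$ becomes
\[
N(B_{s+1})=\lambda_{m'}N(A_s)+(\lambda_{2s-2}-\lambda_n)N(B_s),
\]
which are precisely the two rows of the asserted transition matrix. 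This is the same mechanism that produces the corresponding corollary in Case 1; only the base values and the index of the denominator product differ.

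The one point I would take care to justify is that $N(A_s)$ and $N(B_s)$ are genuine polynomials in $\lambda_{m'},\lambda_m,\lambda_n$ (equivalently in $m,n,k$), so that $d_s$ really is the denominator appearing in the unreduced expressions rather than an introduced artifact. This follows from the very induction above: the transition matrix has polynomial entries and the base values $N(A_1)=N(B_1)=1$ are polynomial, so the numerators remain polynomial at every stage. I do not expect a genuine obstacle here; the whole argument reduces to a single substitution plus the telescoping identity $d_{s+1}/d_s=\lambda_m-\lambda_{2s}$. The only real care is bookkeeping the sign flip $(\lambda_n-\lambda_{2s-2})\mapsto(\lambda_{2s-2}-\lambda_n)$ and confirming the empty-product value $d_1=1$ that anchors the base case.
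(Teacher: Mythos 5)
Your proposal is correct and matches the paper's intent: the paper derives this corollary "immediately" from Theorem 5 together with the stated denominator product $d_{s+1}=\prod_{l=1}^{s}(\lambda_m-\lambda_{2l})$, which is exactly your clearing-of-denominators argument with $d_1=1$ as the empty product. Your explicit verification of the telescoping factor, the sign flip, and the polynomiality of the numerators simply spells out what the paper leaves implicit.
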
 

To proceed towards the diagonal, for instance,  we have for $s\ge 1,$
\begin{equation}C_{s+1}= B_{s}+B_{s+1}, \qquad N(C_{s+1})=(\lambda_m-\lambda_{2s})N(B_{s})+ N(B_{s+1}).\end{equation}

As before, when $m$ is large enough, the denominator is non-vanishing, and vanishing in a coordinate relative to the the central value reduces to solving the corresponding Diophantine equation, say  
\begin{equation}N(A_s)=0,\end{equation}
in $m,$ $n$, and $k.$  In the classification of \cite{RRV}, diagonal zeros from Section 4 closest to the central zero are denoted by $R$.  For $m, n$ even and  $k$ odd, these subfamilies correspond to positions around the central value as follows:

\begin{equation}
\left[
\begin{array}{ccccccccc}
  3.15    &  \phantom{-} 3.16    &    \phantom{-}3.17  &  \phantom{-}3.16    &   \phantom{-} 3.15 &      &      &       &       \\
  3.16    &  \phantom{-} 2.11    &  \phantom{-}2.12    &  \phantom{-} 2.12   &  \phantom{-}  2.11 & \phantom{-}3.16     &      &       &       \\
  3.17    &   \phantom{-}2.12    &  \phantom{-} II   & \phantom{-} R    & \phantom{-} II &  \phantom{-} 2.12    & \phantom{-}3.17     &       &       \\
  3.16    & \phantom{-} 2.12   &  \phantom{-}R    &  \phantom{-} \bullet   & \phantom{-}\bullet & \phantom{-} R    &  \phantom{-}2.12    &  \phantom{-}3.16     &       \\     
  3.15     &  \phantom{-}2.11  &  \phantom{-}II     &  \phantom{-}\bullet    & \phantom{-} \fbox{0}    & \phantom{-} \bullet &  \phantom{-}II    &  \phantom{-}2.11    &  \phantom{-}3.15  \\
      &\phantom{-}3.16  & \phantom{-}2.12 & \phantom{-} R    &  \phantom{-}\bullet    & \phantom{-} \bullet  &  \phantom{-}R    &  \phantom{-}2.12    &  \phantom{-}3.16    \\
      &       &   \phantom{-}3.17   &  \phantom{-}2.12    &   \phantom{-}II &  \phantom{-}R    &  \phantom{-}II    &  \phantom{-} 2.12    & \phantom{-} 3.17     \\
      &       &      &    \phantom{-}3.16  &  \phantom{-}2.11 &   \phantom{-}2.12   &  \phantom{-} 2.12   &\phantom{-}2.11     &  \phantom{-}3.16     \\          
      &       &      &      &    \phantom{-}3.15 &   \phantom{-}3.16   &   \phantom{-}3.17   &  \phantom{-}3.16     &   \phantom{-}3.15    
 \end{array}\right]. \notag
 \end{equation}

 Diophantine equations in the subcentral triangle are given by
 \begin{itemize}
 \item $II$: $N(A_2)=\lambda_{m'}-\lambda_m-\lambda_n+8=0,$
  \item $R$: $N(B_2)=\lambda_{m'}-\lambda_n=(m-2k)(m+2n-2k+2)=0,$
  \item 2.11: $N(A_3) = (\lambda_{m'}-\lambda_m+24)N(A_2)-(\lambda_n-8)N(B_2)=0,$
  \item 2.12: $N(B_3)= \lambda_{m'}N(A_2)-(\lambda_n-8)N(B_2)=0,$
  \item 3.15:   $N(A_4)= (\lambda_{m'}-\lambda_m+48)N(A_3)-(\lambda_n-24)N(B_3)=0,$
  \item 3.16:   $N(B_4)= \lambda_{m'}N(A_3)-(\lambda_n-24)N(B_3)=0,$
  \item 3.17:   $N(C_4)=(\lambda_m-48)N(B_3)+N(B_4)=0.$
 \end{itemize}
We have relabeled subfamilies 2.15-2.17 in Table 3 of \cite{RRV} as 3.15-3.17 here;  the groupings naturally correspond to concentric hexagons about the center.

\section{Parametrization of Type I and II Zeros}
\label{sec:8}

In \cite{RRV}, a full parametrization for zeros of type $I-VI$ are given.  For expository purposes, we include an algorithm for generating all $(m, n, k)$ satisfying 
\begin{equation}I: N(A_1)=0\qquad \mbox{and}\qquad II: N(A_2)=0.\notag\end{equation}
Types $III-VI$ admit similar parameterizations; each case requires solving a Diophantine equation of the form $xy=uv$, where $x, y, u, v$ are linear expressions in $m, n,$ and $k.$ 

\begin{proposition}  With $k>0$ and even, all solutions to 
\begin{equation}m,\ n\ \mbox{even},\quad m'=m+n-2k,\quad \lambda_{m'}=\lambda_m+\lambda_n\end{equation}
 are given by
\begin{equation}m=2N, \quad n=Q-P-1, \quad k=N-P  \end{equation}
for some integers $N, P, Q$ with
\begin{enumerate}
\item $N\ge 3,$
\item $PQ=N(N+1)$ for $1\le P <Q$ and $P<N,$ and
\item $P$ and $Q$ (resp. $P$ and $N$) have opposite (resp. same) parity.
\end{enumerate}
\end{proposition}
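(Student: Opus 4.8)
The plan is to linearize the quadratic condition by the substitution $\lambda_s = (s+1)^2 - 1$, which turns $\lambda_{m'} = \lambda_m + \lambda_n$ into a Pythagorean-type relation. Setting $a = m+1$, $b = n+1$, and $c = m'+1$, the equation $\lambda_{m'}=\lambda_m+\lambda_n$ becomes $c^2 = a^2 + b^2 - 1$, while the defining relation $m' = m+n-2k$ reads $c = a+b-1-2k$. First I would record the two elementary consequences $c - b = m - 2k$ and $c + b = m' + n + 2$, both even since $m$, $n$, and $k$ are even.

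For the reverse inclusion (every solution has the stated form), I would set $N = m/2$, so that $a = 2N+1$ and $a^2 - 1 = 4N(N+1)$, and rewrite the equation as the factorization $(c-b)(c+b) = a^2 - 1 = 4N(N+1)$. Defining $P = (c-b)/2$ and $Q = (c+b)/2$, which are integers by the parity remark above, this is exactly $PQ = N(N+1)$, the equation in condition (2). Unwinding the definitions gives $P = N - k$ and $Q - P = b = n+1$, that is $k = N - P$ and $n = Q - P - 1$, the stated formulas.

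It then remains to extract the inequalities and parities, and I expect this bookkeeping to be the only real work once the reformulation is in hand. Since $a^2 - 1 > 0$ forces $c > b$ (the degenerate $m=0$ admitting no positive-$k$ solution), we obtain $P \ge 1$; from $b > 0$ we get $Q > P$; and $k > 0$ gives $P < N$. The parity statements are automatic: $N = m/2$ is an integer, $P = N - k$ has the same parity as $N$ because $k$ is even, and $Q = P + (n+1)$ has parity opposite to $P$ because $n$ is even. Finally $N \ge 3$ follows by elimination: $N = 1$ is impossible since $1 \le P < N$ is then unsatisfiable, and $N = 2$ would force $P = 1$, making $k = N - P = 1$ odd, contrary to $k$ even.

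For the forward inclusion I would run the algebra backwards: given $N, P, Q$ satisfying (1)--(3), the formulas produce $m = 2N$ even, $k = N - P$ a positive even integer (positive because $P < N$, even because $P$ and $N$ share parity), and $n = Q - P - 1$ even (because $P$ and $Q$ have opposite parity); substituting into $\lambda_{m'} = \lambda_m + \lambda_n$ and invoking $PQ = N(N+1)$ verifies the identity through the same difference-of-squares computation. As a sanity check that these are genuine domain points, I would note that $P < N$ yields $Q = N(N+1)/P > N+1$, hence $Q \ge N+2$, giving $k = N - P \le n$ and $k < m$, so $k \le \min(m,n)$.
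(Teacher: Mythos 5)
Your proposal is correct and follows essentially the same route as the paper: completing the square via $\lambda_s=(s+1)^2-1$ to get $(m'+1)^2+1=(m+1)^2+(n+1)^2$, then factoring the resulting difference of squares to identify $P$, $Q$, $N$ with $PQ=N(N+1)$. You simply make explicit the bookkeeping (inequalities, parities, and the exclusion of $N=1,2$) that the paper leaves to the reader.
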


\begin{proof}  Consider the equation \begin{equation}A^2+1= B^2+C^2\end{equation}
with all $A, B, C > 2$ and odd. Basic algebra yields
\begin{equation}\frac{A-C}{2}\, \frac{A+C}{2}=\frac{B-1}{2}\, \frac{B+1}{2}.\end{equation}
Thus solutions are given precisely when
\begin{equation}A=P+Q, \quad B=2N+1,\quad C=Q-P.\end{equation}

Now completing the square in (51) yields
\begin{equation}(m'+1)^2+1= (m+1)^2+(n+1)^2,\end{equation}
and the proposition follows.\ \ $\qed$
\end{proof}

For example, when $N=3,$ $P=1$ and $Q=12$, we have $(m, n, k) = (6, 10, 2).$    See Fig. 3.

\begin{figure}
\includegraphics[scale=.48]{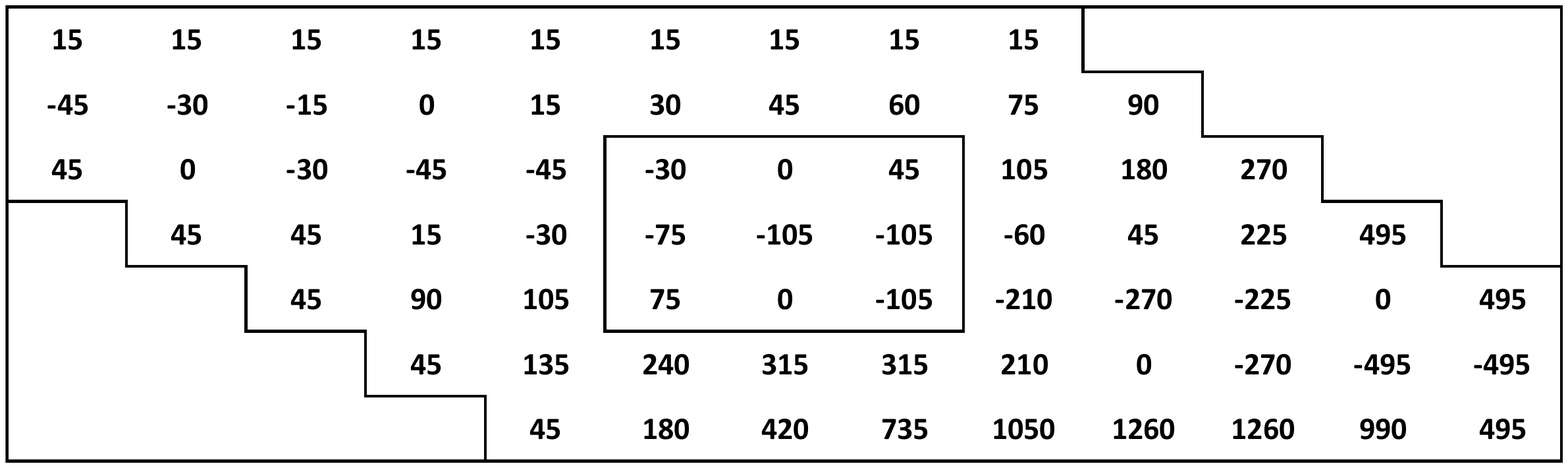}
\caption{$M(6, 10, 2)$: the smallest example with type $I$ zeros}
\end{figure}

Consideration of divisibility properties allows one to directly parametrize some subseries of solutions to (51), as noted in the table below. The first line covers all cases where $P$ divides $N$. The next series gives the general series where the odd part of $P$ divides $N+1$;  in this case, with $0<b, c < 2^{a+1},$
\begin{equation}bc=1\ (mod\ 2^a), \quad bc\ne 1\ (mod\ 2^{a+1}).\end{equation}
Noting that $N$ and $N+1$ have no common factors, we leave it to the reader to generalize to other series.

\begin{table}
\caption{Some Basic Series of Type I Zeros (Position A1)}
\label{tab:2}
\begin{tabular}{p{3cm}p{3cm}p{4cm}}
\hline\noalign{\smallskip}
  $P$ & $N$ & $N+1$ \\
\noalign{\smallskip}\svhline\noalign{\smallskip}  
$s$  & $s(2t+1)$&\\
 $2s+1$ &   & $2t(2s+1)$ \\
 $2(4s+1)$ &  & $(4s+1)(4t+3)$  \\
 $2(4s+3)$ &  & $(4s+3)(4t+1)$ \\
 $\dots$  & & $\dots$\\
 $2^a(2^{a+1}s+b)$ & & $(2^{a+1}s+b)(2^{a+1}t+c)$\\
\noalign{\smallskip}\hline\noalign{\smallskip}
\end{tabular}
\end{table}

Associated to $II: N(A_2)=0$, we have

\begin{proposition}  With $k>1$ and odd, all solutions to 
\begin{equation}m,\ n\ \mbox{even},\quad m'=m+n-2k,\quad \lambda_{m'}+8=\lambda_m+\lambda_n\end{equation}
 are given by
\begin{equation}m=2N+2, \quad n=Q-P-1, \quad k=N-P+1  \end{equation}
for some integers $N, P, Q$ with
\begin{enumerate}
\item $N\ge 3,$
\item $PQ=N(N+3)$ for $1\le P <Q$ and $P<N,$ and
\item $P$ and $Q$ (resp. $P$ and $N$) have opposite (resp. same) parity.
\end{enumerate}
\end{proposition}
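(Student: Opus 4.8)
The plan is to mirror the proof of Proposition 22 almost verbatim, with the constant $1$ replaced by $9$. First I would recall that $\lambda_s = s(s+2) = (s+1)^2 - 1$, so that completing the square converts the defining relation $\lambda_{m'}+8 = \lambda_m + \lambda_n$ into
\begin{equation}(m+1)^2 + (n+1)^2 = (m'+1)^2 + 9.\notag\end{equation}
Setting $A = m'+1$, $B = m+1$, $C = n+1$, the evenness of $m$ and $n$ (hence of $m' = m+n-2k$) forces $A$, $B$, $C$ to be odd, and the problem becomes the Diophantine equation $A^2 + 9 = B^2 + C^2$ in odd integers. This is the exact analogue of the equation $A^2 + 1 = B^2 + C^2$ in Proposition 22, the only change being $1 \mapsto 9$ coming from the extra $8$.

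The key algebraic step is to pair the constant $9$ with the $m$-variable, rewriting the equation as $A^2 - C^2 = B^2 - 9$, that is,
\begin{equation}\frac{A-C}{2}\,\frac{A+C}{2} = \frac{B-3}{2}\,\frac{B+3}{2}.\notag\end{equation}
Since $A, C$ are odd the quantities $A \pm C$ are even, and since $B$ is odd the quantities $B \pm 3$ are even, so all four halves are integers. Writing $A - C = 2P$, $A + C = 2Q$, and $B - 3 = 2N$ (equivalently $B = 2N+3$, i.e. $m = 2N+2$) gives $A = P+Q$, $C = Q-P$, and the single constraint $PQ = N(N+3)$. Unwinding, $n = C-1 = Q-P-1$, and solving $m' = P+Q-1 = m+n-2k$ for $k$ yields $k = N-P+1$, which is precisely the stated parametrization. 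Conversely each $(N,P,Q)$ with $PQ = N(N+3)$ reconstructs a solution of the $\lambda$-equation, so the correspondence is a bijection once the sign and range conventions are fixed.

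It then remains to translate the arithmetic side conditions. The requirement that $n = Q-P-1$ be even is equivalent to $Q-P$ being odd, hence to $P$ and $Q$ having opposite parity; the requirement that $k = N-P+1$ be odd is equivalent to $N-P$ being even, hence to $P$ and $N$ having the same parity; together these give condition 3, while $m = 2N+2$ is automatically even. The inequalities $1 \le P < Q$ guarantee $C = Q-P > 0$ and $A-C = 2P > 0$, and combined with $P < N$ and the same-parity condition they force $N-P \ge 2$, hence $k = N-P+1 \ge 3$, recovering the hypothesis that $k > 1$ is odd; moreover $m - 2k = 2P > 0$ keeps the point admissible. Condition 1, $N \ge 3$, records $B = 2N+3 > 3$ together with the positivity of the remaining parameters.

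I expect the only genuinely delicate point to be the bookkeeping in this last step: one must verify that $1 \le P < Q$, $P < N$, $N \ge 3$, together with the two parity constraints, carve out \emph{exactly} the factorizations of $N(N+3)$ that return admissible triples $(m,n,k)$ with $m,n$ even, $k>1$ odd, and the point lying in the five-dimensional cone, with no spurious or omitted solutions. The completing-the-square and the factoring identity are routine once the constant $9$ and the pairing $(B-3)(B+3)$ are identified; establishing a genuine bijection rather than a mere injection is where the care is needed.
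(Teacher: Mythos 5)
Your proposal is correct and follows essentially the same route as the paper: the paper's own proof simply completes the square to obtain $(m'+1)^2+9=(m+1)^2+(n+1)^2$ and then invokes the argument of the preceding proposition, which is exactly the factorization $\tfrac{A-C}{2}\,\tfrac{A+C}{2}=\tfrac{B-3}{2}\,\tfrac{B+3}{2}$ with $A=P+Q$, $B=2N+3$, $C=Q-P$ that you carry out. Your version just fills in the parity and range bookkeeping that the paper leaves implicit.
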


\begin{proof}  Completing the square in (58) yields
\begin{equation}(m'+1)^2+9= (m+1)^2+(n+1)^2,\end{equation}
and the proof now follows as in Proposition 20. $\qed$
\end{proof}

\emph{Remark.} For example,  we have $(m, n, k) = (8, 16, 3)$  when $N=3,\ P=1$ and $Q=18$.  

With $l\ge 2,$ solutions of (58) of the form $(m, n, k) = (2l, 2, 1)$ correspond to the fourth degenerate case. That is, with respect to $M(m, n, k)$, we obtain a vertical zero triplet with a single proper zero.

\section{Cases 3 and 4:  $m$ odd, $n$ even}
\label{sec:9}

The remaining two cases allow for a simultaneous treatment.   In both cases, the center is a square of size 2, and the lower-right entries are given by Propositions 17 and 18.  

\begin{proposition} Suppose $m$ is odd and $n$ is even, and $M(m, n, k)$ has central square
\begin{equation}\left[\begin{array}{cc}  X' & * \\ \fbox{$A_0 X$} & \fbox{X}  \end{array}\right]\notag\end{equation}
for nonzero $X$.  Then
\begin{equation}A_0=\frac{m'-m}{m'+1}\ \ \  \mbox{if}\ k\ \mbox{even};\quad A_0=\frac{m'+m+2}{m'+1}\ \  \ \mbox{if}\  k\ \mbox{odd}\notag\end{equation}
where $m'=m+n-2k.$
\end{proposition}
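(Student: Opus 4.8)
The plan is to realize $A_0$ as a ratio of two entries of $M(m,n,k)$ and then to eliminate the unknown magnitude $X$ by playing Pascal's recurrence against the $180^\circ$ Weyl group symmetry. First I would record the coordinates. Since $m$ is odd and $n$ is even, $m'=m+n-2k$ is odd, so $M(m,n,k)$ has two central rows and two central columns; the boxed bottom row of the displayed square is $A_0X=c_{m,n,k}(\frac{m+1}{2},\frac{n}{2}-1)$ on the left and $X=c_{m,n,k}(\frac{m+1}{2},\frac{n}{2})$ on the right, while the upper-left corner is $X'=c_{m,n,k}(\frac{m-1}{2},\frac{n}{2})$. Thus $A_0=c_{m,n,k}(\frac{m+1}{2},\frac{n}{2}-1)/X$, and in particular the explicit evaluations in Propositions 17 and 18 are not needed, since $A_0$ is only a ratio.

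Next I would apply Pascal's recurrence (Proposition 7) at $(i,j)=(\frac{m+1}{2},\frac{n}{2})$, which reads $X=A_0X+X'$ and hence $A_0=1-X'/X$. It remains to compute $X'/X$. Here the key observation is that the Weyl group symmetry acts by $(i,j)\mapsto(m-i,n-j)$, and this carries $(\frac{m+1}{2},\frac{n}{2})$ to exactly $(\frac{m-1}{2},\frac{n}{2})$, i.e.\ it interchanges the entries $X$ and $X'$. Consequently Proposition 3, applied at $(i,j)=(\frac{m+1}{2},\frac{n}{2})$, relates $X'$ and $X$ directly, with no other entry entering.

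Carrying this out, the two multinomial coefficients appearing in Proposition 3 share the common middle slot $\frac{n}{2}$ and differ only by moving one unit between the first slot ($\frac{m\mp1}{2}$) and the third slot, which evaluates to $\frac{m'\pm1}{2}$. Their ratio therefore telescopes to $(m+1)/(m'+1)$, giving $X'=(-1)^k\frac{m+1}{m'+1}X$. Substituting into $A_0=1-X'/X$ yields $A_0=1-\frac{m+1}{m'+1}=\frac{m'-m}{m'+1}$ when $k$ is even, and $A_0=1+\frac{m+1}{m'+1}=\frac{m'+m+2}{m'+1}$ when $k$ is odd, as claimed.

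I expect no serious obstacle; the argument is short once one notices that Pascal plus the $180^\circ$ symmetry determines the ratio $A_0$ intrinsically, independent of the size of $X$. The only point demanding care is the bookkeeping of the two starred entries in the multinomials of Proposition 3 — confirming that they equal $\frac{m'-1}{2}$ and $\frac{m'+1}{2}$ — after which the multinomial ratio collapses immediately.
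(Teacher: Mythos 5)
Your proof is correct and follows essentially the same route as the paper: the paper's proof consists precisely of the Weyl-group relation $(m+1)X=(-1)^k(m'+1)X'$ (your multinomial-ratio computation) combined, via the reference to Proposition 19, with Pascal's recurrence $X=A_0X+X'$. Your write-up simply makes explicit the coordinate bookkeeping and the telescoping of the two multinomial coefficients that the paper leaves implicit.
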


\begin{proof}\ \  See Proposition 19.  In this case, the Weyl group symmetry yields 
\begin{equation}(m+1)X=(-1)^k (m'+1)X'.\end{equation}
$\qed$
\end{proof}

Consider the following fourth-quadrant submatrix, where $X$ represents the lower-right entry of the central square:
\begin{equation}
\left[
\begin{array}{cccc} 
\fbox{$A_0 X$} &\phantom{-} \fbox{X} & &  \\
A_1 X & \phantom{-}B_1 X&  &   \\
A_2 X& \phantom{-}B_2 X & \phantom{-}C_2 X&   \\
A_3 X& \phantom{-}B_3 X& \phantom{-}C_3 X& \phantom{-}D_3\\
\end{array}\right].
\end{equation}
As before, we have 
\begin{theorem} Let $X$ be the lower-right entry determined by Propositions 17 or 18, and define $\lambda_s=s(s+2).$ The first two columns of  (62) are computed recursively by
\begin{equation}A_0=\frac{m'-m}{m'+1}\ \ \ \mbox{if}\ k\ \mbox{even}; \quad A_0=\frac{m'+m+2}{m'+1}\ \ \  \mbox{if} \ k\ \mbox{odd}; \quad B_0 = 1,\end{equation}
\begin{equation}A_{s+1}=\frac{(\lambda_{m'}-\lambda_m+\lambda_{2s+1}+1)A_s + (\lambda_{2s}-\lambda_{n}) B_s}{\lambda_m-\lambda_{2s+1}},\end{equation}
\begin{equation}B_{s+1}=\frac{(\lambda_{m'}+1)A_s + (\lambda_{2s}-\lambda_{n}) B_s}{\lambda_m-\lambda_{2s+1}}.\end{equation}
\end{theorem}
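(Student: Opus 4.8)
The plan is to mimic the alternation of Pascal's recurrence (Proposition 7) and the reverse recurrence (Proposition 8) that produced the four relations of Proposition 19 and the recursions of Theorems 4 and 5, now anchored at the odd-$m$, even-$n$ center rather than at a single central value. First I would pin down the exact arguments of the two columns being tracked. Writing $X=c_{m,n,k}(\frac{m+1}2,\frac n2)$ for the lower-right entry of the central square (Propositions 17 and 18, which also guarantee $X\ne 0$), the submatrix (62) identifies
\[A_s X=c_{m,n,k}\bigg(\frac{m+1}2+s,\ \frac n2-1-s\bigg),\qquad B_s X=c_{m,n,k}\bigg(\frac{m+1}2+s,\ \frac n2-s\bigg),\]
so $A_s$ sits in the column just left of $X$ and $B_s$ in the column of $X$. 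The base data are immediate: $B_0=1$ by the definition of $X$, while $A_0$ is exactly the content of Proposition 22, coming from Pascal's identity $X=X'+A_0X$ together with the Weyl relation $(m+1)X=(-1)^k(m'+1)X'$.

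For the step $s\to s+1$ I would first apply the reverse recurrence at $(i,j)=(\frac{m+1}2+s,\frac n2-1-s)$. Its three entries are precisely $A_s X$, $B_{s+1}X$, and $B_s X$ (the $c(i,j)$, $c(i+1,j)$, and $c(i,j+1)$ terms, respectively), so Proposition 8 reads
\[a_1A_s=a_2B_{s+1}+a_3B_s,\qquad\mbox{hence}\qquad B_{s+1}=\frac{a_1A_s-a_3B_s}{a_2}.\]
I would then take a single Pascal step at $(i,j)=(\frac{m+1}2+s+1,\frac n2-1-s)$, whose three entries are $B_{s+1}X$, $A_sX$, and $A_{s+1}X$; this gives the clean relation $A_{s+1}=B_{s+1}-A_s$. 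Substituting the expression for $B_{s+1}$ and clearing the common denominator then yields (64) and (65) simultaneously, and the same computation divided through by $X$ covers Cases 3 and 4 at once.

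The one genuinely load-bearing step is evaluating the three coefficients of Proposition 8 at this index and recognizing them as shifted $\lambda$-values. Each coefficient is a product of two integers with a fixed sum, so writing $\lambda_j=(j+1)^2-1$ converts it to a difference of squares. Here $a_1$ has the two \emph{equal} factors $i+j-k+1=m+n-i-j-k=\frac{m'+1}2$, so $4a_1=(m'+1)^2=\lambda_{m'}+1$; this coincidence (rather than the difference-one situation of the even-$m$ case, which gave exactly $\lambda_{m'}$) is precisely the origin of the ``$+1$'' in both (64) and (65). The factors of $a_2$, namely $i+1$ and $m-i$, sum to $m+1$ and differ by the even amount $2(s+1)$, so $4a_2=(m+1)^2-(2s+2)^2=\lambda_m-\lambda_{2s+1}$, forcing the \emph{odd} subscript $2s+1$ in the denominator, in contrast with Theorems 4 and 5; similarly the factors of $a_3$ sum to $n+1$ and differ by $2s+1$, giving $4a_3=\lambda_n-\lambda_{2s}$. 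I expect the main obstacle to be purely organizational: keeping this parity-dependent bookkeeping straight so that each $a_i$ lands on the correct shifted $\lambda$. Once that is done, the algebra collapses exactly as in the proof of Theorem 5, with $\lambda_{2s}$ in the denominator there replaced by $\lambda_{2s+1}$ and $\lambda_{m'}$ replaced by $\lambda_{m'}+1$.
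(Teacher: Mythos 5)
Your proposal is correct and follows exactly the route the paper intends: the paper proves Theorem 6 only by the remark ``as before,'' i.e.\ by alternating Pascal's recurrence with the reverse recurrence anchored at the lower-right central entry, which is precisely what you carry out, and your identifications $4a_1=(m'+1)^2=\lambda_{m'}+1$, $4a_2=\lambda_m-\lambda_{2s+1}$, $4a_3=\lambda_n-\lambda_{2s}$ and the relations $a_1A_s=a_2B_{s+1}+a_3B_s$, $A_{s+1}=B_{s+1}-A_s$ all check out. Your explanation of where the ``$+1$'' and the odd subscript $2s+1$ come from is a correct and useful gloss on the parity shift relative to Theorems 4 and 5.
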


In the subcentral triangle, unreduced denominators are equal along rows and increase by a factor of $\lambda_m-\lambda_{2s+1}$ as we pass from the $s$-th to the $(s+1)$-st row.  That is, with $s\ge 0,$ the denominator for index $s+1$ equals
\begin{equation}d_{s+1}=(m'+1) \prod_{l=0}^s (\lambda_m-\lambda_{2l+1})=2^{2s+3}\ \frac{m'+1}{m+1}\ \frac{(\frac{m+2s+3}{2})!}{(\frac{m-2s-3}{2})!}.\end{equation}

This implies immediately
\begin{corollary} For $s\ge 0,$ let $N(A_s)$ and $N(B_s)$ be the numerators in the unreduced expressions of $A_s$ and $B_s$, respectively.  Then $N(A_s)$ and $N(B_s)$ are computed recursively by
\begin{equation} N(A_0) =  m'-m\ \ \ \mbox{if}\ \  k\ \mbox{even};\quad  N(A_0)= m'+m+2\ \  \ \mbox{if}\ k \ \mbox{odd}; \quad N(B_0) = m'+1, \end{equation}
\begin{equation}\left[\begin{array}{c} N(A_{s+1}) \\ N(B_{s+1})\end{array}\right]=\left[\begin{array}{cc}\lambda_{m'}-\lambda_m+\lambda_{2s+1}+1 &\phantom{-} \lambda_{2s}-\lambda_n\\ \lambda_{m'}+1 & \phantom{-}\lambda_{2s}-\lambda_n \end{array}\right] \left[\begin{array}{c}N(A_s) \\ N(B_s) \end{array}\right].\end{equation}
\end{corollary}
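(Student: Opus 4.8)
The plan is to derive Corollary 9 directly from Theorem 6, exactly as Corollaries 2 and 5 were derived from Theorems 4 and 5 in the two preceding sections. The strategy rests on the structural fact asserted just before the corollary: in the subcentral triangle, the unreduced denominators are constant along each row and pick up a factor of $\lambda_m - \lambda_{2s+1}$ when passing from row $s$ to row $s+1$, so that the denominator at index $s+1$ is the product $d_{s+1} = (m'+1)\prod_{l=0}^s(\lambda_m - \lambda_{2l+1})$ recorded in equation (66). Since $A_s = N(A_s)/d_{s+1}$ and $B_s = N(B_s)/d_{s+1}$ share this common denominator, clearing it from the recurrences (64) and (65) of Theorem 6 will convert the rational recursion into the purely polynomial matrix recursion (68) for the numerators.

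First I would establish the base case. At $s=0$ the claimed common denominator is $d_1 = (m'+1)(\lambda_m - \lambda_1) = (m'+1)(\lambda_m - 3)$; however, the intended normalization is that $A_0$ and $B_0 = 1$ are written over the single factor $(m'+1)$, giving $N(A_0) = m'-m$ or $m'+m+2$ according to the parity of $k$ (via Proposition 19), and $N(B_0) = m'+1$. I would verify that these numerators are consistent with the definitions in (63), namely $A_0 = (m'-m)/(m'+1)$ or $(m'+m+2)/(m'+1)$ and $B_0 = 1 = (m'+1)/(m'+1)$, which is immediate.

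Next I would carry out the induction step. Assume $A_s = N(A_s)/d_{s+1}$ and $B_s = N(B_s)/d_{s+1}$. Substituting into (64) and (65), the denominators of $A_{s+1}$ and $B_{s+1}$ become $(\lambda_m - \lambda_{2s+1}) \cdot d_{s+1} = d_{s+2}$, precisely the claimed denominator at the next index, so the numerators of the right-hand sides are
\begin{equation}
N(A_{s+1}) = (\lambda_{m'}-\lambda_m+\lambda_{2s+1}+1)\,N(A_s) + (\lambda_{2s}-\lambda_n)\,N(B_s),\notag
\end{equation}
\begin{equation}
N(B_{s+1}) = (\lambda_{m'}+1)\,N(A_s) + (\lambda_{2s}-\lambda_n)\,N(B_s),\notag
\end{equation}
which is exactly the matrix recursion (68). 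The only subtlety is that the coefficients in (64) and (65) are already integers (being polynomial in the $\lambda$'s), so multiplying through by $d_{s+1}$ introduces no spurious denominators and the numerators $N(A_s), N(B_s)$ remain integers; this is what makes the substitution into the Diophantine equations $N(A_s)=0$ meaningful.

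The step I expect to be the main obstacle — modest as it is — is pinning down the denominator bookkeeping in (66), in particular reconciling the product formula $(m'+1)\prod_{l=0}^s(\lambda_m-\lambda_{2l+1})$ with the factorial closed form $2^{2s+3}\,\frac{m'+1}{m+1}\,\frac{(\frac{m+2s+3}{2})!}{(\frac{m-2s-3}{2})!}$ and confirming that the shared factor $(m'+1)$ is handled uniformly across the even-$k$ and odd-$k$ cases, since the two values of $N(A_0)$ differ only in the sign of $m$. Once (66) is granted (it follows from telescoping $\lambda_m - \lambda_{2l+1} = (m-2l-1)(m+2l+3)$ and is assumed here), the corollary is a one-line consequence of clearing denominators in Theorem 6, and no further computation is required. $\qed$
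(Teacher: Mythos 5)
Your argument is exactly the paper's (the paper offers no proof beyond ``this implies immediately''): clear the common row denominator from the recurrences of Theorem 6, whose coefficients are already integral, and the polynomial matrix recursion for the numerators drops out, with the base case read off from Proposition 19. The only blemish is an off-by-one in your denominator bookkeeping --- the induction hypothesis should be $A_s = N(A_s)/d_s$ with $d_s=(m'+1)\prod_{l=0}^{s-1}(\lambda_m-\lambda_{2l+1})$ (so $d_0=m'+1$, consistent with your base case), whence multiplying by the factor $\lambda_m-\lambda_{2s+1}$ appearing in (64)--(65) gives $d_{s+1}$ as in (66); as written, your identity $(\lambda_m-\lambda_{2s+1})\,d_{s+1}=d_{s+2}$ is false, since the new factor in $d_{s+2}$ is $\lambda_m-\lambda_{2s+3}$.
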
 

As before, numerators correspond to the following positions, up to a $C_2\times C_2$ symmetry:

\begin{equation}
\begin{array}{ccc}
& m\ \mbox{odd},\ n\ \mbox{even},\ k\ \mbox{even} & \\[1.0ex]
& \left[\begin{array}{cccccccc}
3.4    & \phantom{-}3.6  & \phantom{-}3.14  & \phantom{-}3.12  &   &   &   & \\
3.6   & \phantom{-}2.4  &  \phantom{-}2.6 &\phantom{-} 2.10  & \phantom{-}3.8  &   &   & \\
3.14   & \phantom{-}2.6  & \phantom{-}IV  & \phantom{-}VI  & \phantom{-}2.8  & \phantom{-}3.10  &   & \\
3.12    & \phantom{-}2.10  & \phantom{-}VI  & \phantom{-}\bullet  & \phantom{-}R  & \phantom{-}2.8  & \phantom{-}3.8  & \\
   & \phantom{-}3.8  & \phantom{-}2.8  & \phantom{-}\fbox{$R$}  & \phantom{-}\fbox{$\bullet$}  & \phantom{-}VI  & \phantom{-}2.10  & \phantom{-}3.12\\
    &   & \phantom{-}3.10  & \phantom{-}2.8  & \phantom{-}VI  & \phantom{-}IV  & \phantom{-}2.6  & \phantom{-}3.14\\
   &   &   & \phantom{-}3.8  & \phantom{-}2.10  & \phantom{-}2.6  & \phantom{-}2.4  & \phantom{-}3.6 \\
    &   &   &   & \phantom{-}3.12  & \phantom{-}3.14  & \phantom{-}3.6  & \phantom{-}3.4\\
\end{array}\right] & 
\end{array},\notag
 \end{equation}

 \begin{itemize}
 \item $R$:  $N(A_0)=m'-m=n-2k=0,$
 \item 2.8: $N(A_1)=(m'-m)(\lambda_{m'}-\lambda_m+4)-\lambda_n(m'+1)=0,$
 \item $VI$: $N(B_1) = (m'+1)[(m'-m)(m'+1)-\lambda_n]=0,$
 \item 3.8: $N(A_2)= (\lambda_{m'}-\lambda_m+16)N(A_1)-(\lambda_n-8)N(B_1)=0,$
 \item 2.10: $N(B_2)= (\lambda_{m'}+1)N(A_1)-(\lambda_n-8)N(B_1)=0,$
 \item 3.12: $N(B_3)=(\lambda_{m'}+1)N(A_2)-(\lambda_n-24)N(B_2)=0,$
 \item IV:  $N(C_1)= (m'+1)[\lambda_m-\lambda_n-3+(m'-m)(m'+1)]=0,$
 \item 2.6: $N(C_2)=(\lambda_m-15)N(B_1)+N(B_2)=0,$
 \item 3.14: $N(C_3)=(\lambda_m-35)N(B_2)+N(B_3)=0,$
 \item 2.4:  $N(D_2)=(\lambda_m-15)N(C_1)+N(C_2)=0,$
 \item 3.6: $N(D_3)=(\lambda_m-35)N(C_2)+N(C_3)=0,$
 \item 3.4: $N(E_3)=(\lambda_m-35)N(D_2)+N(D_3)=0.$
 \end{itemize}

\begin{equation}
\begin{array}{ccc}
& m\ \mbox{odd},\ n\ \mbox{even},\ k\ \mbox{odd} & \\[1.0ex]
& \left[\begin{array}{cccccccc}
3.3    & \phantom{-}3.5  & \phantom{-}3.13  & \phantom{-}3.11  &   &   &   & \\
3.5   & \phantom{-}2.3  &  \phantom{-}2.5 & \phantom{-}2.9  & \phantom{-}3.7  &   &   & \\
3.13   & \phantom{-}2.5  & \phantom{-}III  & \phantom{-}V  & \phantom{-}2.7  & \phantom{-}3.9  &   & \\
3.11    & \phantom{-}2.9  & \phantom{-}V  & \phantom{-}\bullet  & \phantom{-}\bullet  & \phantom{-}2.7  & \phantom{-}3.7  & \\
   & \phantom{-}3.7  & \phantom{-}2.7  & \phantom{-}\fbox{$\bullet$}  & \phantom{-}\fbox{$\bullet$} & \phantom{-}V  & \phantom{-}2.9  & \phantom{-}3.11\\
    &   & \phantom{-}3.9  & \phantom{-}2.7  & \phantom{-}V  & \phantom{-}III  & \phantom{-}2.5  & \phantom{-}3.13\\
   &   &   & \phantom{-}3.7  & \phantom{-}2.9  & \phantom{-}2.5  & \phantom{-}2.3  & \phantom{-}3.5 \\
    &   &   &   & \phantom{-}3.11  & \phantom{-}3.13  & \phantom{-}3.5  & \phantom{-}3.3\\
\end{array}\right] & 
\end{array},\notag
\end{equation}

\begin{itemize}
\item 2.7:  $N(A_1)=(m'+m+2)(\lambda_{m'}-\lambda_m+4)-\lambda_n(m'+1)=0,$
 \item $V$: $N(B_1) =(m'+1) [(m'+m+2)(m'+1)-\lambda_n]=0,$
 \item 3.7: $N(A_2)= (\lambda_{m'}-\lambda_m+16)N(A_1)-(\lambda_n-8)N(B_1)=0,$
 \item 2.9: $N(B_2)= (\lambda_{m'}+1)N(A_1)-(\lambda_n-8)N(B_1)=0,$
\item 3.11: $N(B_3)=(\lambda_{m'}+1)N(A_2)-(\lambda_n-24)N(B_2)=0,$
 \item $III$: $N(C_1)= (m'+1)[\lambda_m-\lambda_n-3+(m'+m+2)(m'+1)]=0,$
 \item 2.5: $N(C_2)=(\lambda_m-15)N(B_1)+N(B_2)=0,$
 \item 3.13: $N(C_3)=(\lambda_m-35)N(B_2)+N(B_3)=0,$
 \item 2.3: $N(D_2)=(\lambda_m-15)N(C_1)+N(C_2)=0,$
 \item 3.5: $N(D_3)=(\lambda_m-35)N(C_2)+N(C_3)=0,$
 \item 3.3: $N(E_3)=(\lambda_m-35)N(D_2)+N(D_3)=0.$
 \end{itemize}

For types 3.9 and 3.10, simultaneously consider the entries $Z_1$ of this type near $A_1$.  Application of both recurrences yields
\begin{equation}N(Z_1)=(\lambda_{m'}+\lambda_m+2)^2-4\lambda_{m'}\lambda_m-28-\lambda_n[(m'+1)N(A_0)+\lambda_m-3]\notag\end{equation}
and
\begin{equation}Z_1=\frac{A_0N(Z_1)}{(\lambda_{m'}-3)}.\notag\end{equation}

\section{Computer Implementation}
\label{sec:10}

In \cite{RRV}, an analysis is given for certain vanishing $c_{m, n, k}(i, j)$ with $J=m+n-k<3,000.$   We leave it to the reader to pursue those details there. 

The algorithms in this work require only rudimentary programming expertise, implemented on conventional hardware (2013 MacBook Pro).  The figures were constructed using Excel, which was also used to compute all $M(m, n, k)$ above.  Other algorithms, such the numerator formulas, were stress-tested using MAPLE.  

%
%
%

\end{document}